\title{A Luna \'etale slice theorem for algebraic stacks}
\author[J.~Alper]{Jarod Alper}
\address{Department of Mathematics, University of Washington, Box 354350, Seattle, WA 98195-4350, USA}
\email{jarod@uw.edu}
\author[J.~Hall]{Jack Hall}
\address{Department of Mathematics, University of Arizona, Tucson, AZ 85721-0089, USA}
\email{jackhall@math.arizona.edu}
\address{School of Mathematics \& Statistics, The University of Melbourne, Parkville, VIC, 3010, Australia}
\email{jack.hall@unimelb.edu.au}
\author[D.~Rydh]{David Rydh}
\address{Department of Mathematics, KTH Royal Institute of Technology, SE-100 44 Stockholm, Sweden}
\email{dary@math.kth.se}
\thanks{During the preparation of this article, the first author was partially supported by the Australian Research 
Council grant DE140101519, the National Science Foundation grant DMS-1801976 and by a Humboldt Fellowship. The second author was partially supported by the Australian Research Council grant DE150101799. The third author was partially supported by the Swedish Research Council grants 2011-5599 and 2015-05554.}
\subjclass[2010]{Primary 14D23; Secondary 14B12, 14L24, 14L30}
\date{November 23, 2020}
        \setlist[enumerate]{font=\normalfont}
        \theoremstyle{plain}
        \newtheorem{theorem}{Theorem}[section]
        \newtheorem{corollary}[theorem]{Corollary}
        \newtheorem{lemma}[theorem]{Lemma}
        \newtheorem{question}[theorem]{Question}
        \newtheorem{proposition}[theorem]{Proposition}
        \theoremstyle{definition}
        \newtheorem{definition}[theorem]{Definition}
        \newtheorem{example}[theorem]{Example}
        \newtheorem*{example*}{Example}
        \theoremstyle{remark}
        \newtheorem{remark}[theorem]{Remark}
        \newtheorem*{remark*}{Remark}  
\numberwithin{equation}{section}
\newcommand{\inj}{\hookrightarrow} 
\newcommand{\surj}{\twoheadrightarrow} 
\newcommand{\stX}{\mathcal{X}} 
\newcommand{\stY}{\mathcal{Y}} 
\newcommand{\stZ}{\mathcal{Z}} 
\newcommand{\stW}{\mathcal{W}} 
\newcommand{\stG}{\mathcal{G}} 
\newcommand{\sE}{\mathcal{E}} 
\newcommand{\sF}{\mathcal{F}} 
\newcommand{\sI}{\mathcal{I}} 
\newcommand{\idp}{\mathfrak{p}} 
\newcommand{\idm}{\mathfrak{m}} 
\newcommand{\AR}[1]{\mathrm{(AR)_{#1}}} 
\DeclareMathOperator{\Gr}{Gr} 
\newcommand{\fm}{\mathfrak{m}}
\newcommand{\GG}{\mathbb{G}}
\newcommand{\PP}{\mathbb{P}}
\newcommand{\red}{\mathrm{red}}
\newcommand{\op}{\mathrm{op}} 
\renewcommand\AA{\mathbb{A}}
\newcommand\NN{\mathbb{N}}
\newcommand\ZZ{\mathbb{Z}}
\newcommand\CC{\mathbb{C}}
\newcommand\fM{\mathfrak{M}}
\newcommand\cA{\mathcal{A}}
\newcommand\cC{\mathcal{C}}
\newcommand\cE{\mathcal{E}}
\newcommand\cF{\mathcal{F}}
\newcommand\cG{\mathcal{G}}
\newcommand\cH{\mathcal{H}}
\newcommand\cI{\mathcal{I}}
\newcommand\cL{\mathcal{L}}
\newcommand\cM{\mathcal{M}}
\newcommand\cN{\mathcal{N}}
\newcommand\cW{\mathcal{W}}
\newcommand\cV{\mathcal{V}}
\newcommand\cU{\mathcal{U}}
\newcommand\cX{\mathcal{X}}
\newcommand\cY{\mathcal{Y}}
\newcommand\cZ{\mathcal{Z}}
\newcommand\oh{\mathcal{O}}
\newcommand\co{\colon}
\newcommand\Coh{\mathsf{Coh}}
\newcommand\Mod{\mathsf{Mod}}
\newcommand\Ab{\mathsf{Ab}}
\newcommand\Quot{\mathsf{Quot}}
\DeclareMathOperator{\Sing}{Sing}
\DeclareMathOperator{\Spec}{Spec}
\DeclareMathOperator{\Spf}{Spf}
\DeclareMathOperator{\GL}{GL}
\DeclareMathOperator{\Sym}{Sym}
\newcommand\Sets{\mathsf{Sets}}
\newcommand{\gitq}{/\!\!/}
\newcommand{\ilim}{\varprojlim}
\newcommand{\dlim}{\varinjlim}
\renewcommand{\tilde}{\widetilde}
\DeclareMathOperator{\id}{id}
\newcommand{\iso}{\stackrel{\sim}{\to}}
\renewcommand{\ss}{\mathrm{ss}}
\renewcommand{\hat}{\widehat}
\DeclareMathOperator{\ev}{ev}
\DeclareMathOperator{\Isom}{Isom}
\newcommand{\Gmu}{\pmb{\mu}}
\newcommand{\tensor}{\otimes}
\DeclareMathOperator{\Aut}{Aut}
\newcommand{\itemref}[1]{\eqref{#1}}
\DeclareMathOperator{\Def}{Def}
\newcommand{\QCoh}{\mathsf{QCoh}}
\newcommand{\DCAT}{\mathsf{D}}
\newcommand{\DQCOH}{\DCAT_{\QCoh}}
\DeclareMathOperator{\Hom}{Hom}
\newcommand{\Sch}{\mathsf{Sch}}
\newcommand{\DERF}[1]{\mathsf{{#1}}}
\newcommand{\RDERF}{\DERF{R}}
\newcommand{\LDERF}{\DERF{L}}
\DeclareMathOperator{\Ext}{Ext}
\newcommand{\shv}[1]{\mathcal{{#1}}}
\newcommand{\cplx}[1]{\shv{{#1}}^\bullet}
\newcommand{\QCPBK}[1]{\LDERF {#1}^*_{\QCoh}}
\newcommand{\spref}[1]{\href{http://stacks.math.columbia.edu/tag/#1}{#1}}
\DeclareMathOperator{\coker}{coker}
\DeclareMathOperator{\im}{im}
\newcommand{\normin}[2]{\tilde{#1}_{#2}}
\begin{document}
\setcounter{tocdepth}{1}
\begin{abstract} We prove that every algebraic stack, locally of finite type over an algebraically closed field with affine stabilizers, is \'etale-locally a quotient stack in a neighborhood of a point with linearly reductive stabilizer group.  The proof uses an equivariant version of Artin's algebraization theorem proved in the appendix.   We provide numerous applications of the main theorems.
\end{abstract}
\maketitle
\section{Introduction}

Quotient stacks form a distinguished class of algebraic stacks which provide intuition for 
the geometry of general algebraic stacks. Indeed, equivariant algebraic geometry has a 
long history with a wealth of tools at its disposal. Thus, it has long been desired---and 
more recently believed \cite{alper-quotient,alper-kresch}---that certain algebraic 
stacks are locally quotient stacks. This is fulfilled by the main result of this article:

\begin{theorem}\label{T:field}
Let $\cX$ be a quasi-separated algebraic stack, locally 
of finite type over an algebraically closed field $k$, with affine stabilizers.  Let $x \in \cX(k)$ be a point and $H \subseteq G_x$ be a subgroup scheme of the stabilizer such that $H$ is linearly reductive and $G_x / H$ is smooth (resp.\ \'etale).  Then there exists an affine scheme $\Spec A$ with an action of $H$, a $k$-point $w \in \Spec A$ fixed by $H$, and a smooth (resp.\ \'etale) morphism
$$f\colon \bigl([\Spec A/H],w\bigr) \to (\cX,x)$$
such that $BH \cong f^{-1}(BG_x)$;  in particular, $f$ induces the given inclusion $H \to G_x$ on stabilizer group schemes at $w$. In addition, if $\cX$ has affine diagonal, then the morphism $f$ can be arranged to be affine.
\end{theorem}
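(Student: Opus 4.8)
The plan is to construct the slice first formally along the residual gerbe and then algebraize. Write $\cG_x \cong BG_x$ for the residual gerbe at $x$; the inclusion $H \hookrightarrow G_x$ induces a morphism $BH \to \cG_x \hookrightarrow \cX$, which will be the restriction of $f$ to the $H$-fixed point $w$. The key structural fact I will exploit throughout is that $H$ is linearly reductive, so the category of $H$-representations is semisimple and the functor of $H$-invariants is exact; consequently every deformation and obstruction space occurring below is a $G_x$- (hence $H$-) representation that splits off $H$-equivariantly, which is exactly what makes the equivariant lifting problems solvable.

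First I would build, order by order, a compatible system of $H$-equivariant square-zero thickenings $w \in \Spec A_n$ with morphisms $[\Spec A_n/H] \to \cX$ extending $BH \to \cX$, arranged so that the induced map is formally smooth (resp.\ formally \'etale) at $w$. The deformation theory is governed by the equivariant cotangent complex of $\cX$ at $x$, whose cohomology consists of $G_x$-representations; linear reductivity of $H$ lets me split the relevant exact triangles $H$-equivariantly and solve each lifting problem. The hypotheses on $G_x/H$ enter in prescribing the directions transverse to $BH \to \cG_x$: smoothness (resp.\ \'etaleness) of $G_x/H$ is what guarantees that an $H$-equivariant slice of the correct relative dimension exists. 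Passing to the limit yields a formal $H$-equivariant morphism $[\Spf \hat{A}/H] \to \cX$ with the desired formal properties.

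Next I would algebraize. Invoking the equivariant version of Artin's algebraization theorem proved in the appendix, this formal morphism is effectivized and approximated by a morphism $f\colon ([\Spec A/H],w) \to (\cX,x)$ out of an honest finite-type affine $H$-scheme, agreeing with the formal construction to high enough order that $f$ is still formally smooth (resp.\ \'etale) at $w$. Since $f$ is of finite presentation and formally smooth at $w$, it is smooth (resp.\ \'etale) on an open $H$-stable neighborhood of $w$; shrinking to a saturated $H$-stable affine open containing the closed $H$-orbit of $w$ is possible precisely because $H$ is linearly reductive (a Luna-type argument, or equivalently the existence of the good moduli space $\Spec A \gitq H$). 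The conditions $BH \cong f^{-1}(BG_x)$ and the statement on stabilizers then follow because $f$ induces $H \hookrightarrow G_x$ at $w$ and is unramified on stabilizers there, so after shrinking $w$ is the only point over $\cG_x$. For the affine refinement, if $\cX$ has affine diagonal then $\Spec A \to \cX$ is affine (being, over any affine test scheme, a base change of the affine diagonal of $\cX$), and affineness descends along the affine $H$-torsor $\Spec A \to [\Spec A/H]$, so $f$ itself is affine.

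I expect the main obstacle to be the algebraization step and, intertwined with it, the passage from a purely formal solution to an actual \'etale neighborhood: proving the equivariant Artin algebraization theorem---so that the formal $H$-equivariant slice really is the completion of a finite-type $H$-scheme---is the technical heart, while the guarantee that the smooth locus contains an $H$-saturated affine open (keeping $[\Spec A/H]$ a quotient stack with $w$ in a closed orbit) rests essentially on the linear reductivity of $H$.
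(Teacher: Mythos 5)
Your overall strategy---build the slice infinitesimally using linear reductivity of $H$, then invoke the equivariant Artin algebraization from the appendix---is the same as the paper's, but there is a genuine gap at the transition between the two stages. After constructing the compatible system of morphisms $[\Spec A_n/H]\to\cX$, you write that ``passing to the limit yields a formal $H$-equivariant morphism $[\Spf \hat{A}/H]\to\cX$'' and that the algebraization theorem then ``effectivizes and approximates'' it. But the equivariant algebraization result (Corollary \ref{C:equivariant-algebraization}) takes as input an honest morphism $\eta\colon[\Spec A/H]\to\cX$ from a genuine algebraic stack with $A^H$ complete local, formally versal at $z$; it does \emph{not} perform effectivization. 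Producing such a morphism from the inverse system is exactly the step the paper isolates as the technical heart: one needs to know that $[\Spec \hat{A}/H]$ is \emph{coherently complete} along $BH$ (Theorem \ref{key-theorem}) and then apply Tannaka duality (Corollary \ref{C:tannakian}) to convert the compatible system of morphisms on infinitesimal neighborhoods into a single morphism. Neither ingredient appears in your proposal, and without them the ``limit'' is only a formal object to which the appendix does not apply. This is also why the paper does not build abstract thickenings $\Spec A_n$ but instead realizes the $\cH_n$ as closed substacks of the fixed ambient stack $\hat{\cN}=[N_x/H]\times_{N_x\gitq H}\Spec\hat{\oh}_{N_x\gitq H,0}$: coherent completeness of $\hat{\cN}$ produces the limiting ideal sheaf, hence the closed substack $\hat{\cH}$, and Tannaka duality then produces $\eta\colon\hat{\cH}\to\cX$.

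A second, smaller error: your argument for the affine refinement does not work. Affineness of the composite $\Spec A\to[\Spec A/H]\to\cX$ does not descend to $[\Spec A/H]\to\cX$ along the fppf cover $\Spec A\to[\Spec A/H]$; for instance $\Spec k\to BG\to\Spec k$ has affine composite while $BG\to\Spec k$ is not even representable. Affine morphisms may be checked fppf-locally on the \emph{target}, not the source. The paper's Proposition \ref{P:refinement} instead first shrinks so that the relative inertia of $f$ is trivial (making $f$ representable and separated), then uses the affine diagonal of $\cX$ together with cohomological affineness of the source to see that $f$ is cohomologically affine, and concludes by Serre's criterion.
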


This justifies the philosophy that quotient stacks of 
the form $[\Spec A / G]$, where $G$ is a linearly reductive group, are the building blocks
 of algebraic stacks near points with linearly reductive stabilizers. 
 
In the case of smooth algebraic stacks, we can provide a more refined description (Theorem \ref{T:smooth}) which  
resolves the algebro-geometric counterpart to the Weinstein 
conjectures~\cite{weinstein_linearization}---now known as Zung's Theorem 
\cite{zung_proper_grpds,MR2776372,MR3185351,MR3259039}---on the linearization of 
proper Lie groupoids in differential geometry.  Before we state the second theorem, we introduce the following notation: 
if $\cX$ is an algebraic stack over a field $k$ and $x\in \cX(k)$ is a closed
point with stabilizer group scheme $G_x$, then we let $N_x$ denote the normal
space to $x$ viewed as a $G_x$-representation. If $\cI \subseteq \oh_{\cX}$
denotes the sheaf of ideals defining $x$, then $N_{x} = (\cI/\cI^2)^\vee$.  If
$G_x$ is smooth, then $N_{x}$ is identified with the tangent space of $\cX$ at
$x$; see Section \ref{S:tangent}.

\begin{theorem}\label{T:smooth}
Let $\cX$ be a quasi-separated algebraic stack, locally 
of finite type over an algebraically closed field $k$, with affine stabilizers.  Let $x \in |\cX|$ be a smooth and closed point with linearly reductive stabilizer
  group $G_x$. Then there exists an affine and \'etale morphism $(U,u) \to (N_x \gitq G_x,0)$, where  $N_{x} \gitq G_x$ denotes the GIT quotient, and a cartesian diagram
 $$\xymatrix{
 \bigl([N_{x}/G_x],0\bigr) \ar[d] & \bigl([W / G_x],w\bigr)\ar[r]^-f \ar[d] \ar[l]	& (\cX,x) \\
 (N_{x} \gitq G_x,0) & (U,u) \ar[l]	\ar@{}[ul]|\square				& 
}$$
such that $W$ is affine and $f$ is \'etale and induces an isomorphism
    of stabilizer groups at $w$. In addition, if $\cX$ has affine diagonal, then the morphism $f$ can be
  arranged to be affine.
\end{theorem}

In particular, this theorem implies that $\cX$ and $[N_{x} /G_x]$ have a common \'etale 
neighborhood of the form $[\Spec A / G_x]$. 

The main techniques employed in the proof of Theorem \ref{T:smooth} are
\begin{enumerate}
\item deformation theory,
\item coherent completeness,
\item Tannaka duality, and
\item Artin approximation.
\end{enumerate}
Deformation theory produces an isomorphism between the $n$th infinitesimal neighborhood $\cN^{[n]}$ of 
$0$ in $\cN = [N_x/G_x]$ and the $n$th infinitesimal neighborhood $\cX^{[n]}_x$ of $x$ in 
$\cX$. It is not at all obvious, however, that the system of morphisms $\{f^{[n]} \colon \cN^{[n]} \to \cX\}$ algebraizes. We establish algebraization in two steps. 

The first step is effectivization. To accomplish this, we introduce \emph{coherent completeness}, a key concept of the article. Recall that if $(A,\fm)$ is a complete local ring, then 
$\Coh(A) = \ilim_n \Coh(A/\fm^{n+1})$. Coherent completeness (Definition \ref{D:cc}) is a generalization of this, 
which is more refined than the formal GAGA results of \cite[III.5.1.4]{EGA} and 
\cite{geraschenko-brown} (see \S\ref{A:gms_app}). What we prove in \S\ref{S:cc} is the 
following.
\begin{theorem} \label{key-theorem}
Let $G$ be a linearly reductive affine group scheme over a field $k$. Let $\Spec A$ be a noetherian affine scheme with an action of~$G$, and let $x \in \Spec A$ be a closed point fixed by $G$.   Suppose that $A^{G}$ is a complete local ring. Let $\cX = [\Spec A / G]$ and let $\cX^{[n]}_x$ be the $n$th infinitesimal neighborhood of $x$.  Then the natural functor
\begin{equation} \label{eqn-coh}
\Coh(\cX)  \to  \ilim_n \Coh\bigl(\cX^{[n]}_x\bigr)
\end{equation}
is an equivalence of categories.
\end{theorem}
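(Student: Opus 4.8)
The plan is to translate the statement into equivariant commutative algebra and then factor the comparison functor through a completion. Write $\fm_x\subseteq A$ for the $G$-stable maximal ideal of the fixed point $x$ and set $A_n:=A/\fm_x^{n+1}$; since $x$ is $G$-fixed the ideal sheaf defining $x$ pulls back to $\fm_x$, so $\Coh(\cX)=\Coh^G(A)$ and $\Coh(\cX^{[n]})=\Coh^G(A_n)$, the categories of finitely generated $G$-equivariant modules. Let $\hat A:=\ilim_n A_n$ be the $\fm_x$-adic completion; as $\fm_x$ is maximal this is a Noetherian complete local ring carrying the inverse-limit $G$-action, with $\hat A^G=\ilim_n A_n^G$. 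I would prove the theorem by establishing two equivalences,
$$\Coh^G(A)\;\xrightarrow{\,-\otimes_A\hat A\,}\;\Coh^G(\hat A)\;\longrightarrow\;\ilim_n \Coh^G(A_n),$$
the second being the restriction functor. Throughout, the three consequences of linear reductivity that drive everything are that $(-)^G$ is exact and commutes with all limits, that $\Ext^1_G(V,K)=0$ for all $G$-representations $V,K$ (equivalently $H^{>0}(G,-)=0$), and that every object decomposes into isotypic components $M=\bigoplus_\rho \rho\otimes_k\underline M_\rho$ with $\underline M_\rho:=\Hom_G(\rho,M)$.

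The second equivalence is the \emph{complete} case and is comparatively formal. For essential surjectivity, given a compatible system $\{M_n\}$ set $\hat M:=\ilim_n M_n$; lifting a basis of the finite-dimensional $G$-representation $M_0$ amounts to lifting $\mathrm{id}\in\Hom_G(M_0,M_0)$ through the tower $\{\Hom_G(M_0,M_n)\}$, whose transition maps are surjective because $\Ext^1_G$ vanishes, and then $G$-equivariant Nakayama over the complete local ring $\hat A$ produces a finite $G$-equivariant presentation of $\hat M$ with $\hat M/\fm_x^{n+1}\hat M\cong M_n$. Full faithfulness follows from the classical identity $\Hom_{\hat A}(\hat M,\hat N)=\ilim_n\Hom_{A_n}(M_n,N_n)$ for finite modules over a complete ring, after applying the exact, limit-preserving functor $(-)^G$.

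The first equivalence is the crux, and it is here that completeness of $A^G$ enters. Passing to isotypic components turns a $G$-equivariant module into the collection of multiplicity modules $\underline M_\rho$, which are finitely generated over $A^G$ by coherence of pushforward along the good moduli space $\cX\to\Spec A^G$ (this is where $\cX$ Noetherian is used). The key computation is that $A\to\hat A$ induces an isomorphism on every multiplicity module of the structure sheaf: indeed $\underline{\hat A}_\sigma=\ilim_n (\underline{A_n})_\sigma$ is the completion of the finite $A^G$-module $\underline A_\sigma$ with respect to the filtration induced by $\{\fm_x^{n+1}\}$, and I would show this filtration is cofinal with the $\fm$-adic one, where $\fm\subseteq A^G$ is the maximal ideal, by Chevalley's lemma applied over the complete local ring $A^G$ to this decreasing, separated filtration with finite-length quotients. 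Since a finite module over the complete local ring $A^G$ is already $\fm$-adically complete, this gives $\underline{\hat A}_\sigma=\underline A_\sigma$; the same argument applied to any finite $G$-equivariant $\hat A$-module then lets one reassemble it from its multiplicity modules and the $A^G$-linear action maps, which are unchanged under completion, as a finite $G$-equivariant $A$-module, yielding $\Coh^G(A)\simeq\Coh^G(\hat A)$.

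I expect the main obstacle to be precisely this first equivalence. The $\fm_x$-adic topology on $\cX$ and the $\fm$-adic (GIT) topology on $\Spec A^G$ are genuinely different—already on $[\AA^2/\GG_m]$ the two filtrations fail to be cofinal upstairs—so one cannot replace infinitesimal neighborhoods of $x$ by preimages of infinitesimal neighborhoods of its image. Linear reductivity reconciles them only after passing to multiplicity modules, and the decisive mechanism is Chevalley's lemma over the complete local ring $A^G$; the remaining care is the bookkeeping that reconstruction of module and algebra structures from multiplicity data respects multiplication.
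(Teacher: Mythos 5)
Your proposal correctly isolates the crux: for a finite $G$-equivariant $A$-module $M$, one must show that the filtration $\{(\fm_x^{n}M)^{(\rho)}\}$ on each multiplicity module $M^{(\rho)}=\bigl(M\otimes V_\rho^\vee\bigr)^G\otimes V_\rho$ defines the same topology as the $\fm$-adic one over $A^G$ (where $\fm=\fm_x\cap A^G$). This is exactly the paper's key isomorphism \eqref{E:formal}; where the paper proves it by reducing to the artinian case and taking a double inverse limit over the ideal $J=(\fm^G)A$, your appeal to Chevalley's lemma over the complete local ring $A^G$ is a legitimate and arguably cleaner alternative. But there is a genuine gap precisely where you write ``this decreasing, \emph{separated} filtration'': separatedness means $\bigcap_n \fm_x^nM=0$, and this is not automatic. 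Krull's intersection theorem only identifies $\bigcap_n\fm_x^nM$ with the kernel of $M\to M_{\fm_x}$, which is nonzero as soon as $M$ has an associated prime not contained in $\fm_x$. The correct argument is that $N=\bigcap_n\fm_x^nM$ is a $G$-submodule whose support is a closed $G$-invariant subset of $\Spec A$ avoiding $x$; since $A^G$ is \emph{local}, $\Spec A$ has a unique closed orbit, namely $\{x\}$, so $N=0$. This is where the locality of $A^G$ enters the proof, and Chevalley's lemma cannot be invoked without it.

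The second gap concerns your factorization through $\Coh^G(\hat A)$. The $\fm_x$-adic completion $\hat A$ does not carry an algebraic $G$-action: in your own example $[\AA^2/\GG_m]$ one has $\hat A=k[[u,v]]$, the coaction lands only in a completed tensor product, and $\hat A$ is not a rational $G$-module; so ``$\Coh^G(\hat A)$'' must be defined as compatible systems of actions on the truncations, at which point all the content sits in your first functor. More seriously, your essential surjectivity via ``reassembly from multiplicity modules'' never addresses why $\bigoplus_\rho V_\rho\otimes \Hom_G(V_\rho,\hat M)$ is finitely generated over $A$: each multiplicity module is finite over $A^G$, but there are infinitely many $\rho$. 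The paper avoids both problems by first proving full faithfulness of \eqref{eqn-coh} (reduced via the resolution property to the structure sheaf, i.e., to \eqref{E:formal}), and then, for essential surjectivity, lifting a two-term presentation $V'\otimes\oh_{\cX^{[n]}}\to V\otimes\oh_{\cX^{[n]}}\to\cF_n\to0$ of the given system using the vanishing of $\Ext^1$ and Nakayama, descending the presentation \emph{matrix} to $\cX$ by the full faithfulness already established, and taking the cokernel on $\cX$ itself. Your lifting-a-presentation argument over $\hat A$ is the right germ of this; routing the descent through \eqref{E:formal} applied to $V'^\vee\otimes V\otimes A$, rather than through an infinite isotypic reassembly, is what closes the gap.
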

Tannaka duality for algebraic stacks with affine stabilizers was recently established by the 
second two authors \cite[Thm.~1.1]{hallj_dary_coherent_tannakian_duality} (also see 
Theorem \ref{T:tannakian}). This proves that morphisms between algebraic stacks $\cY \to \cX$ are equivalent to symmetric monoidal functors $\Coh(\cX) \to \Coh(\cY)$. Therefore, to prove Theorem \ref{T:smooth}, we can  combine Theorem \ref{key-theorem} with Tannaka duality (Corollary \ref{C:tannakian}) and the above deformation-theoretic observations to show that the morphisms $\{f^{[n]} \colon \cN^{[n]} \to \cX\}$ effectivize to $\hat{f} \colon \hat{\cN} \to \cX$, where $\hat{\cN} = \cN \times_{N_x\gitq G_x} \Spec \hat{\oh}_{N_x\gitq G_x,0}$. 
The morphism $\hat{f}$ is then algebraized using Artin approximation \cite{artin-approx}. 

The techniques employed in the proof of Theorem \ref{T:field} are similar, but the methods are more involved. Since we no longer assume that $x \in \cX(k)$ is a non-singular point, we cannot expect an \'etale or smooth morphism $\cN^{[n]}\to \cX_x^{[n]}$ where $\cN=[N_x/H]$. Using Theorem \ref{key-theorem} and Tannaka duality, however, we can produce a closed substack $\hat{\cH}$ of $\hat{\cN}$ and a formally versal morphism $\hat{f} \colon \hat{\cH} \to \cX$. To algebraize $\hat{f}$, we apply an equivariant version of Artin algebraization (Corollary \ref{C:equivariant-algebraization}), which we believe is of independent interest.

For tame stacks with finite inertia, Theorem \ref{T:field} is one of the main results of \cite{tame}. The structure of algebraic stacks with infinite stabilizers has been poorly understood until the present article. 
For algebraic stacks with infinite stabilizers that are not---or are not known to be---quotient stacks, Theorems \ref{T:field} and \ref{T:smooth} were only known when $\cX = \fM_{g,n}^{\ss}$ is the 
moduli stack of semistable curves. This is the central result of \cite{alper-kresch}, where it is also shown that $f$ can be arranged to be representable. For certain quotient stacks, Theorems \ref{T:field} and \ref{T:smooth} can be obtained 
using traditional methods in equivariant algebraic geometry, see \S\ref{A:luna} for details. 

\subsection{Some remarks on the hypotheses}
We mention here several examples illustrating the necessity of some of the hypotheses of Theorems \ref{T:field} and \ref{T:smooth}.

\begin{example} \label{ex1}
Some reductivity assumption of the stabilizer $G_x$ is necessary in Theorem \ref{T:field}.  For instance,  consider the group scheme $G= \Spec k[x,y]_{xy+1} \to \AA^1 = \Spec k[x]$ (with multiplication defined by $y \mapsto xyy' + y + y'$), where the generic fiber is $\GG_m$ but the fiber over the origin is $\GG_a$.  Let $\cX = BG$ and $x \in |\cX|$ be the point corresponding to the origin.  There does not exist an \'etale morphism $([W/ \GG_a], w) \to (\cX, x)$, where $W$ is an algebraic space over $k$ with an action of $\GG_a$.
\end{example}

\begin{example} \label{ex2}
It is essential to require that the stabilizer groups are affine in a neighborhood of $x \in |\cX|$.  For instance, let $X$ be a smooth curve and let $\cE \to X$ be a group scheme whose generic fiber is a smooth elliptic curve but the fiber over a point $x \in X$ is isomorphic to $\GG_m$.  Let $\cX = B\cE$.  There is no \'etale morphism $([W/ \GG_m], w) \to (\cX, x)$, where $W$ is an affine $k$-scheme with an action of $\GG_m$.
\end{example}

\begin{example} \label{ex3}
In the context of Theorem \ref{T:field}, it is not possible in general to find a Zariski-local quotient presentation of the form $[\Spec A/G_x]$.  Indeed, if $C$ is the projective nodal cubic curve with $\GG_m$-action, then there is no Zariski-open $\GG_m$-invariant affine neighborhood of the node.  If we view $C$ ($\GG_m$-equivariantly) as the $\ZZ/2\ZZ$-quotient of the union of two $\PP^1$'s glued along two nodes, then after removing one of the nodes, we obtain a (non-finite) \'etale morphism $[\Spec(k[x,y]/xy)/\GG_m] \to [C/\GG_m]$ where $x$ and $y$ have weights $1$ and $-1$.  This is in fact the unique such quotient presentation (see Remark \ref{R:uniqueness}).
\end{example}

The following two examples illustrate that in Theorem \ref{T:field} it is not always possible to obtain a quotient presentation $f\colon [\Spec A/G_x]\to \cX$, such that $f$ is representable or separated without additional hypotheses; see also Question \ref{Q:represent}.

\begin{example} \label{ex5}
Consider the
non-separated affine line as a group scheme $G \to \AA^1$ whose generic fiber
is trivial but the fiber over the origin is $\ZZ/2\ZZ$.  Then $BG$ admits an
\'etale neighborhood $f \co [\AA^1/(\ZZ/2\ZZ)] \to BG$ which induces an isomorphism
of stabilizer groups at $0$, but $f$ is not representable in a neighborhood. 
\end{example}

\begin{example} \label{ex4}
Let $\mathcal{L}og$ (resp.\ $\mathcal{L}og^{\mathrm{al}}$) be the algebraic stack of log structures (resp.\ aligned log structures) over $\Spec k$ introduced in \cite{olssonloggeometry} (resp.\ \cite{acfw}).  Let $r\geq 2$ be an integer and let $\NN^r$ be the free log structure on $\Spec k$.  There is an \'etale neighborhood $[\Spec k[\NN^r] / (\GG_m^r \rtimes S_r)] \to \mathcal{L}og$ of $\NN^r$ which is not representable.  Note that $\mathcal{L}og$ does not have separated diagonal.
Similarly, there is an \'etale neighborhood $[\Spec k[\NN^r] / \GG_m^r] \to \mathcal{L}og^{\mathrm{al}}$ of $\NN^r$ (with the standard alignment) which is representable but not separated.  Because $[\Spec k[\NN^r] / \GG_m^r] \to \mathcal{L}og^{\mathrm{al}}$ is inertia-preserving, $\mathcal{L}og^{\mathrm{al}}$ has affine inertia and hence separated diagonal; however, the diagonal is not affine.
  In both cases, this is the unique such quotient presentation (see Remark \ref{R:uniqueness}).
\end{example}

\subsection{Generalizations}
Using similar arguments, one can in fact establish a generalization of Theorem \ref{T:field} to the relative and mixed characteristic setting.  This requires developing some background material on deformations of linearly reductive group schemes, a more general version of Theorem \ref{key-theorem} and a generalization of the formal functions theorem for good moduli spaces.  To make this article more accessible, we have decided to postpone the relative statement until the follow-up article \cite{ahr2}.

If $G_x$ is not reductive, it is possible that one could find an \'etale
neighborhood $([\Spec A/\GL_n],w)\to (\cX,x)$. However, this is not known even if
$\cX=B_{k[\epsilon]}G_\epsilon$ where $G_\epsilon$ is a deformation of
a non-reductive algebraic group~\cite{mathoverflow_groups-over-dual-numbers}.

In characteristic $p$, the linearly reductive hypothesis in Theorems \ref{T:field} and \ref{T:smooth} is quite restrictive.  Indeed, a smooth affine group scheme $G$ over an algebraically closed field $k$ of characteristic $p$ is linearly reductive if and only if $G^0$ is a torus and $|G/G^0|$ is coprime to $p$ \cite{nagata}.  We ask however:

\begin{question} \label{Q:geom-red} Does a variant of Theorem \ref{T:field} remain true if ``linearly reductive" is replaced with ``reductive"?
\end{question}

We remark that if $\cX$ is a Deligne--Mumford stack, then the conclusion of Theorem \ref{T:field} holds. We also ask:
\begin{question}\label{Q:represent} If $\cX$ has separated (resp.\ quasi-affine) diagonal, then  can the morphism $f$ in Theorems \ref{T:field} and \ref{T:smooth} be chosen to be representable (resp.\ quasi-affine)?
\end{question} 

If $\cX$ does not have separated diagonal, then the morphism $f$ cannot
necessarily be chosen to be representable; see Examples \ref{ex5} and \ref{ex4}.
We answer Question \ref{Q:represent} affirmatively when $\cX$ has affine diagonal (Proposition \ref{P:refinement}) or is a quotient stack (Corollary \ref{C:refinement:quot_stack}), 
or when $H$ is
diagonalizable (Proposition \ref{P:rep_diagonalizable}).

\subsection{Applications} \label{S:intro-applications}
Theorems \ref{T:field}  and \ref{T:smooth}  yield a number of applications to old and new problems.

\subsubsection*{Immediate consequences}\label{A:immediate}
Let $\cX$ be a quasi-separated algebraic stack, locally of finite type over an algebraically closed field $k$ with affine stabilizers, and let $x \in \cX(k)$ be a point with linearly reductive stabilizer $G_x$.
\begin{enumerate}
\item\label{I:immediate-embedding} There is an \'etale neighborhood of $x$ with a closed embedding into a smooth algebraic stack.
\item There is an \'etale-local description of the cotangent complex $L_{\cX/k}$ of $\cX$ in terms of the cotangent complex $L_{\cW/k}$ of $\cW=[\Spec A/G_x]$.  If $x \in |\cX|$ is a smooth point (so that $\cW$ can be taken to be smooth) and $G_x$ is smooth, then $L_{\cW/k}$ admits an explicit description.  If $x$ is not smooth but $G_x$ is smooth, then the $[-1,1]$-truncation of $L_{\cW/k}$ can be described explicitly by appealing to \itemref{I:immediate-embedding}.
\item For any representation $V$ of $G_x$, there exists a vector bundle over an \'etale neighborhood of $x$ extending $V$.
\item If $G_x$ is smooth, then there are $G_x$-actions on the formal miniversal deformation space $\hat{\Def}(x)$ of $x$ and its versal object, and the $G_x$-invariants of $\hat{\Def}(x)$ is the completion of a finite type $k$-algebra.  This observation is explicitly spelled out in Remark~\ref{R:miniversal-completion-finite-type}.
\item Any specialization $y \rightsquigarrow x$  of $k$-points is realized by a morphism $[\AA^1/\GG_m] \to \cX$. This follows by applying the Hilbert--Mumford criterion to an \'etale quotient presentation constructed by Theorem \ref{T:field}.
\end{enumerate}

\subsubsection*{Local applications} The following consequences of Theorems \ref{T:field} and \ref{T:smooth} to the local geometry of algebraic stacks will be detailed in Section~\ref{S:local_applications}: 
\begin{enumerate}
\item  a generalization of Sumihiro's theorem on torus actions to Deligne--Mumford stacks 
  (\S\ref{A:sumihiro}), confirming an expectation of Oprea \cite[\S2]{oprea};
\item a generalization of Luna's \'etale slice theorem to non-affine schemes 
  (\S\ref{A:luna});
\item the existence of equivariant miniversal deformation spaces for singular curves (\S\ref{A:mv_curve}), generalizing~\cite{alper-kresch};
\item the \'etale-local quotient structure of a good moduli space (\S\ref{A:gms_app}), which in particular establishes formal GAGA for good moduli space morphisms, resolving a conjecture of Geraschenko--Zureick-Brown \cite[Conj.\ 32]{geraschenko-brown};
\item the existence of coherent completions of algebraic stacks at points with linearly reductive stabilizer (\S\ref{A:coherent-completion});
\item a criterion for \'etale-local equivalence of algebraic stacks (\S\ref{A:etale}), 
  extending Artin's corresponding results for schemes \cite[Cor.~2.6]{artin-approx};

\item the resolution property holds \'etale-locally for algebraic stacks near a point whose stabilizer has linearly reductive connected component (\S\ref{A:resolution-property-etale}), which in particular provides a characterization of toric Artin stacks in terms of stacky fans  \cite[Thm.~6.1]{GS-toric-stacks-2}.

\end{enumerate}

\subsubsection*{Global applications} In Section~\ref{S:global_applications}, we provide the following global applications:
\begin{enumerate}
\item compact generation of derived categories of algebraic stacks (\S\ref{A:compact-generation});
\item a criterion for the existence of a good moduli space (\S\ref{A:gms}), generalizing 
  \cite{keel-mori,afsw-good};
\item algebraicity of stacks of coherent sheaves, Quot schemes, Hilbert schemes, Hom stacks and equivariant Hom stacks (\S\ref{A:algebraicity});
  \item  a short proof of Drinfeld's results \cite{drinfeld} on algebraic spaces with a $\GG_m$-action and a generalization to Deligne--Mumford stacks with $\GG_m$-actions (\S\ref{A:drinfeld}); and
\item Bia\l ynicki-Birula decompositions for separated Deligne--Mumford stacks 
  (\S\ref{A:BB}). 
\end{enumerate}
We also note that Theorem \ref{T:field} was applied recently by Toda to resolve a wall-crossing conjecture for higher rank DT/PT invariants by Toda \cite{toda-hallalg}.

\subsection{Leitfaden} \label{S:leitfaden}
The logical order of this article is as follows. In Section~\ref{S:cc-tannaka} we
prove the key coherent completeness result (Theorem~\ref{key-theorem}). In Appendix~\ref{A:approx} we state Artin approximation and 
prove an equivariant version of Artin algebraization (Corollary \ref{C:equivariant-algebraization}). These techniques are
then used in Section~\ref{S:proof-section} to prove the main local structure theorems (Theorems~\ref{T:field}
and~\ref{T:smooth}). In Sections~\ref{S:local_applications} and \ref{S:global_applications} we give applications to the
main theorems.

\subsection{Notation} \label{S:notation} 
An algebraic stack $\cX$ is quasi-separated if the diagonal and the diagonal of the diagonal are quasi-compact.
An algebraic stack $\cX$ has \emph{affine stabilizers} if for every field $K$
and point $x\colon \Spec K\to \cX$, the stabilizer group $G_x$ is affine.
If $\cX$ is an algebraic stack and $\cZ \subseteq \cX$ is a closed substack, we will denote by $\cX_{\cZ}^{[n]}$  the $n$th nilpotent thickening of $\cZ \subseteq \cX$
(i.e., if $\cI \subseteq \oh_{\cX}$ is the ideal sheaf defining $\cZ$, then $\cX_{\cZ}^{[n]} \to \cX$ is defined by $\cI^{n+1}$).  If $x\in |\stX|$ is a closed
point, then the \emph{$n$th infinitesimal neighborhood of
  $x$} is the $n$th nilpotent thickening of the inclusion of the residual gerbe $\cG_x \to \cX$.

Recall from \cite{alper-good} that a quasi-separated and quasi-compact morphism $f \co \cX \to \cY$ of algebraic stacks is {\it cohomologically affine} if the push-forward functor $f_*$ on the category of quasi-coherent $\oh_{\cX}$-modules is exact.
If $\cY$ has quasi-affine diagonal and $f$ has affine diagonal, then $f$ is cohomologically affine if and only if $\RDERF f_* \colon \DQCOH^+(\cX) \to \DQCOH^+(\cY)$ is $t$-exact, cf.\ \cite[Prop.~3.10~(vii)]{alper-good} and \cite[Prop.~2.1]{hallj_neeman_dary_no_compacts}; this equivalence is false if $\cY$ does not have affine stabilizers \cite[Rem.~1.6]{hallj_dary_alg_groups_classifying}.
If $G \to \Spec k$ is an affine group scheme of finite type, then we say that $G$ is {\it linearly reductive} if $BG \to \Spec k$ is cohomologically affine.
A quasi-separated and quasi-compact morphism $f \co \cX \to Y$ of algebraic stacks is {\it a good moduli space} if $Y$ is an algebraic space, $f$ is cohomologically affine and $\oh_Y \to f_* \oh_{\cX}$ is an isomorphism.  

If $G$ is an affine group scheme of finite type over a field $k$ acting on an algebraic space $X$, then we say that a $G$-invariant morphism $\pi \co X \to Y$ of algebraic spaces is {\it a good GIT quotient} if the induced map $[X/G] \to Y$ is a good moduli space; we often write $Y = X \gitq G$.  In the case that $G$ is linearly reductive, a $G$-equivariant morphism $\pi \co X \to Y$ is a good GIT quotient if and only if $\pi$ is affine and $\oh_Y \to (\pi_* \oh_X)^G$ is an isomorphism.

An algebraic stack $\cX$ is said to have the {\it resolution property}
if every quasi-coherent $\oh_{\cX}$-module of finite type is a
quotient of a locally free $\oh_{\cX}$-module of finite type.
By the Totaro--Gross
theorem \cite{totaro,gross-resolution}, a quasi-compact and
quasi-separated algebraic stack is isomorphic to $[U/\GL_N]$, where
$U$ is a quasi-affine scheme and $N$ is a positive integer, if and
only if the closed points of $\cX$ have affine stabilizers and $\cX$
has the resolution property.
We will only use the easy implication ($\Longrightarrow$) in this article which can
be strengthened as follows. If $G$ is a group scheme of finite type over a
field $k$ acting on a quasi-affine scheme $U$, then $[U/G]$ has the resolution
property. This is a
consequence of the following two simple observations: (1) $BG$ has
the resolution property; and (2) if $f \colon \cX \to \cY$ is
quasi-affine and $\cY$ has the resolution property, then $\cX$ has the
resolution property. 

If $\cX$ is a noetherian algebraic stack, then we denote by $\Coh(\cX)$ the abelian category of coherent $\oh_{\cX}$-modules.

\subsection*{Acknowledgements}  We thank Andrew Kresch for many useful conversations as well as Dragos Oprea and Michel Brion for some helpful comments.  Finally, we thank the referee whose careful reading greatly improved this article.

\section{Coherently complete stacks and Tannaka duality} \label{S:cc-tannaka}

\subsection{Coherently complete algebraic stacks} \label{S:cc}
Motivated by Theorem \ref{key-theorem}, we begin this section with the following key definition. 
\begin{definition} \label{D:cc}
Let $\cX$ be a noetherian algebraic stack with affine stabilizers and let $\cZ \subseteq \cX$ be a closed substack. For a coherent $\oh_\cX$-module $\sF$, the restriction to $\cX_{\cZ}^{[n]}$ is denoted $\sF_n$.
We say that $\cX$ is {\it coherently complete along $\cZ$} if the natural functor
\[
\Coh(\cX)  \to  \ilim_n \Coh\bigl(\cX_{\cZ}^{[n]}\bigr), \qquad \sF\mapsto \{\cF_n\}_{n\geq 0}
\]
is an equivalence of categories.
 \end{definition}
 We now recall some examples of coherently complete algebraic
 stacks. Traditionally, such results have been referred to as ``formal
 GAGA'' theorems. 
 \begin{example}\label{E:cc}
   Let $A$ be a noetherian ring and let $I \subseteq A$ be an
   ideal. Assume that $A$ is $I$-adically complete, that is,
   $A\simeq \varprojlim_n A/I^{n+1}$.  Then $\Spec A$ is coherently
   complete along $\Spec A/I$. More generally if an algebraic stack
   $\cX$ is proper over $\Spec A$, then $\cX$ is coherently complete
   along $\cX \times_{\Spec A} \Spec A /I$. See \cite[III.5.1.4]{EGA}
   for the case of schemes and \cite[Thm.~1.4]{olsson-proper},
   \cite[Thm.~4.1]{conrad-gaga} for algebraic stacks.
\end{example}
\begin{example}\label{E:gzb}
  Let $(R,\fm)$ be a complete noetherian local ring and let
  $\pi\colon \cX \to \Spec R$ be a good moduli space of finite
  type. If $\cX$ has the resolution property (e.g.,
  $\cX \simeq [\Spec B/\GL_n]$, where $B^{\GL_n}=R$), then $\cX$ is
  coherently complete along $\pi^{-1}(\Spec R/\fm)$ \cite[Thm.~1.1]{geraschenko-brown}. 
\end{example}
Note that in the examples above, completeness was always along a
substack that is pulled back from an affine base. Theorem
\ref{key-theorem} is quite different, however, as the following
example highlights.
\begin{example}
  Let $k$ be a field, then the quotient stack $[\AA^1_k/\GG_m]$ has
  good moduli space $\Spec k$. Theorem \ref{key-theorem} implies that
  $[\AA^1_k/\GG_m]$ is coherently complete along the closed point
  $B\GG_m$. In this special case, one can even give a direct proof of
  the coherent completeness (see Proposition \ref{P:A1-complete}).
\end{example}
We now prove Theorem \ref{key-theorem}.
\begin{proof}[Proof of Theorem \ref{key-theorem}]
 Let $\fm \subset A$ be the maximal ideal corresponding to $x$.  A coherent $\oh_{\cX}$-module $\cF$ corresponds to a finitely generated $A$-module $M$ with an action of $G$.
Note that since $G$ is linearly reductive, $M^G$ is a finitely generated $A^G$-module \cite[Thm.\ 4.16(x)]{alper-good}.  We claim that the following two sequences of $A^G$-submodules $\{(\fm^{n} M)^{G} \}$ and $\{ (\fm^G)^n M^{G} \}$ of $M^{G}$ define the same topology, or in other words that 
\begin{equation} \label{E:formal}
M^{G}  \to  \ilim M^{G} / \bigl(\fm^{n} M\bigr)^{G}
\end{equation}
 is an isomorphism of $A^G$-modules.  
 
To this end, we first establish that
\begin{equation} \label{E:intersection}
 \bigcap_{n \ge 0} \bigl(\fm^n M\bigr)^{G} = 0,
 \end{equation} 
 which immediately informs us that \eqref{E:formal} is injective.
 Let $N =  \bigcap_{n \ge 0} \fm^n M$. The Artin--Rees lemma implies that $N=\fm N$ so $N \tensor_A A/\fm = 0$.
 Since $A^G$ is a local ring, $\Spec A$ has a unique closed orbit $\{x\}$.
 Since the support of $N$ is a closed $G$-invariant
 subscheme of $\Spec A$ which does not contain $x$, it follows that $N=0$.

We next establish that \eqref{E:formal} is an isomorphism if $A^G$ is artinian.  In this case, $\{(\fm^n M)^G\}$ automatically satisfies the Mittag-Leffler condition (it is a sequence of artinian $A^G$-modules).  Therefore, taking the inverse limit of the exact sequences $0 \to (\fm^n M)^G \to M^G \to M^G / (\fm^n M)^G \to 0$ and applying \eqref{E:intersection}, yields an exact sequence
$$0 \to 0 \to M^G \to \ilim M^G / (\fm^n M)^G \to 0.$$
Thus, we have established \eqref{E:formal} when $A^G$ is artinian.

To establish \eqref{E:formal} in the general case, let $J = (\fm^G) A \subseteq A$ and observe that
\begin{equation} \label{E:limit1}
M^G = \ilim M^G / \bigl(\fm^G\bigr)^n M^G = \ilim \bigl(M/J^n M\bigr)^G,
\end{equation}
since $G$ is linearly reductive.
For each $n$, 
we know that
\begin{equation} \label{E:limit2}
\bigl(M/J^nM\bigr)^G = \ilim_l M^G / \bigl((J^n + \fm^l)M \bigr)^G
\end{equation}
using the artinian case proved above.  Finally, combining \eqref{E:limit1} and \eqref{E:limit2} together with the observation that $J^n \subseteq \fm^l$ for $n \ge l$, we conclude that
$$\begin{aligned}
M^G & = \ilim_n \bigl(M / J^n M\bigr)^G \\
	& = \ilim_n \ilim_l M^G / \bigl((J^n + \fm^l)M \bigr)^G \\
	& = \ilim_l M^G / \bigl(\fm^l M\bigr)^G.
\end{aligned}$$

  We now show that \eqref{eqn-coh} is fully faithful.  Suppose that $\shv{G}$ and $\shv{F}$ are coherent
  $\oh_{\cX}$-modules, and let $\shv{G}_n$ and $\shv{F}_n$ denote
  the restrictions to $\cX_x^{[n]}$, respectively. We need to show
  that
  \begin{equation*}
    \Hom(\shv{G}, \shv{F}) \to \ilim \Hom(\shv{G}_n, \shv{F}_n)
  \end{equation*}
  is bijective. Since $\cX$ has the resolution
  property (see \S\ref{S:notation}), we can find locally free $\oh_{\cX}$-modules
  $\cE'$ and $\cE$ and an exact sequence
  \[
\cE' \to \cE \to \shv{G} \to 0.
  \]
  This induces a diagram
  \[
  \xymatrix{
    0 \ar[r]		& \Hom(\shv{G}, \shv{F}) \ar[r] \ar[d]		& \Hom(\cE, \shv{F}) \ar[r] \ar[d]				& \Hom(\cE', \shv{F}) \ar[d]\\
    0 \ar[r] & \ilim \Hom(\shv{G}_n, \shv{F}_n) \ar[r] &
    \ilim \Hom(\cE_n, \shv{F}_n) \ar[r] & \ilim
    \Hom(\cE'_n, \shv{F}_n) }
  \]
  with exact rows.  Therefore, it suffices to assume that $\shv{G}$ is
  locally free.  In this case,
  \[
  \Hom(\shv{G}, \shv{F}) = \Hom(\oh_{\cX}, \shv{G}^{\vee} \otimes
  \shv{F}) \quad \text{and} \quad 
  \Hom(\shv{G}_n, \shv{F}_n) = \Hom\bigl(\oh_{\cX_x^{[n]}},
  (\shv{G}_n^{\vee} \otimes \shv{F}_n)\bigr).
  \]
Therefore, we can also assume that $\shv{G} = \oh_{\cX}$ and we need to verify that the map
  \begin{equation}\label{E:coh-ff}
    \Gamma(\cX,\shv{F}) \to \ilim
    \Gamma\bigl(\cX_x^{[n]},\shv{F}_n\bigr)
  \end{equation}
  is an isomorphism. But 
  $\Gamma\bigl(\cX_x^{[n]},\shv{F}_n\bigr)=
  \Gamma(\cX,\shv{F})/\Gamma(\cX,\fm_x^{n+1}\shv{F})$
  since $G$ is linearly reductive,
  so the map \eqref{E:coh-ff} is identified with the isomorphism \eqref{E:formal}, and the full faithfulness of \eqref{eqn-coh} follows.

  We now prove that the functor
  \eqref{eqn-coh} is essentially surjective. Let $\{\cF_n\} \in \ilim
  \Coh(\cX_x^{[n]})$ be a compatible system of coherent sheaves.
  Since $\cX$ has the
  resolution property (see \S\ref{S:notation}), there is a vector bundle $\shv{E}$ on $\cX$
  together with a surjection $\varphi_0\colon \shv{E} \to \shv{F}_0$. We
  claim that $\varphi_0$ lifts to a compatible system of morphisms
  $\varphi_n \colon \shv{E} \to \shv{F}_n$ for every $n>0$. 
  It suffices to show that for $n>0$, the natural map $\Hom(\cE, \cF_{n+1}) \to \Hom(\cE, \cF_n)$ is surjective. But this is clear: $\Ext_{\oh_{\cX}}^{1}(\cE, \fm^{n+1} \cF_{n+1}) = 0$ since $\cE$ is locally free and $G$ is linearly reductive. It follows that we obtain an induced morphism of systems
  $\{\varphi_n\} \colon \{\cE_n\} \to
  \{\cF_n\}$ and, by Nakayama's Lemma, each $\varphi_n$ is surjective.

  The system of
  morphisms $\{\varphi_n\}$ admits an adic kernel $\{\shv{K}_n\}$ (see \cite[\S3.2]{hallj_dary_coherent_tannakian_duality}, which is a generalization of \cite[Tag \spref{087X}]{stacks-project} to stacks).  
Note that, in general, $\shv{K}_n \neq \ker \varphi_n$ and $\shv{K}_n$ is
  actually the ``stabilization'' of $\ker \varphi_n$ (in the sense of the
  Artin--Rees lemma). Applying the procedure above to $\{\shv{K}_n\}$,
  there is another vector bundle $\cH$ and a morphism of systems
  $\{\psi_n\} \colon \{\cH_n\} \to \{\cE_n\}$ such that
  $\coker(\psi_n) \cong \shv{F}_n$. By the full faithfulness of
  \eqref{eqn-coh}, the morphism $\{\psi_n\}$ arises from a unique
  morphism $\psi \colon \cH \to \cE$. Letting
  $\tilde{\shv{F}} = \coker \psi$, the universal property of cokernels
  proves that there is an isomorphism
  $\tilde{\shv{F}}_n \cong \shv{F}_n$; the result follows.
\end{proof}
\begin{remark} \label{R:explicit}
In this remark, we show that with the hypotheses of Theorem \ref{key-theorem} the coherent $\oh_{\cX}$-module $\cF$ extending a given system $\{\cF_n\} \in \ilim \Coh(\cX_x^{[n]})$ can in fact be constructed explicitly.  Let $\Gamma$ denote the set of irreducible representations of $G$ with $0 \in \Gamma$ denoting the trivial representation. 
For $\rho \in \Gamma$, we let $V_{\rho}$ be the corresponding irreducible representation.    
For any $G$-representation~$V$, we set
$$V^{(\rho)} = \bigl(V \otimes V_{\rho}^{\vee}\bigr)^G \otimes V_\rho.$$
Note that $V = \bigoplus_{\rho \in \Gamma} V^{(\rho)}$ and that $V^{(0)} = V^G$ is the subspace of invariants.  In particular, 
there is a decomposition $A = \bigoplus_{\rho \in \Gamma} A^{(\rho)}$.  The data of a coherent $\oh_{\cX}$-module $\cF$ is equivalent to a finitely generated $A$-module $M$ together with a $G$-action, 
i.e., an $A$-module $M$ with a decomposition $M = \oplus_{\rho \in \Gamma} M^{(\rho)}$, where each $M^{(\rho)}$ is a direct sum of copies of the irreducible representation $V_\rho$, such that the $A$-module structure on $M$ is compatible with the decompositions of $A$ and $M$.  Given a coherent $\oh_{\cX}$-module $\cF = \widetilde{M}$ and a representation $\rho \in \Gamma$, then $M^{(\rho)}$ is a finitely generated $A^G$-module and 
$$M^{(\rho)}  \to  \ilim \bigl(M/ \fm^{k} M\bigr)^{(\rho)}$$
is an isomorphism (which follows from \eqref{E:formal}).  

Conversely, given a system of 
$\{\cF_n = \widetilde{M}_n\} \in \ilim \Coh(\cX_x^{[n]})$ where each $M_n$ is a finitely generated $A / \fm^{n+1}$-module with a $G$-action, then the extension $\cF = \widetilde{M}$ can be constructed explicitly by defining:
$$M^{(\rho)} := \ilim M_n^{(\rho)} \qquad \text{ and } \qquad M := \bigoplus_{\rho \in \Gamma} M^{(\rho)}.$$
One can show directly that each $M^{(\rho)}$ is a finitely generated $A^G$-module, $M$ is a finitely generated $A$-module with a $G$-action, and $M/ \fm^{n+1} M = M_n$.
\end{remark}
 \begin{remark}  
An argument similar to the proof of the essential surjectivity of \eqref{eqn-coh} shows that every vector bundle on $\cX$ is the pullback of a $G$-representation under the projection $\pi \co \cX \to BG$.  Indeed,  given a vector bundle $\cE$ on $\cX$, we obtain by restriction a vector bundle $\cE_0$ on $BG$. 
The surjection $\pi^* \cE_0 \to \cE_0$ lifts to a map $\pi^* \cE_0 \to \cE$ since $\cX$ is cohomologically affine.  By Nakayama's Lemma, the map $\pi^* \cE_0 \to \cE$ is a surjection of vector bundles of the same rank and hence an isomorphism.

In particular, suppose that $G$ is a diagonalizable group scheme.  Then using the notation of Remark \ref{R:explicit}, every irreducible $G$-representation $\rho \in \Gamma$ is one-dimensional so that a $G$-action on $A$  corresponds to a $\Gamma$-grading $A = \bigoplus_{\rho \in \Gamma} A^{(\rho)}$, and an $A$-module with a $G$-action corresponds to a $\Gamma$-graded $A$-module.  
Therefore, if $A = \bigoplus_{\rho \in \Gamma} A^{(\rho)}$ is a $\Gamma$-graded noetherian $k$-algebra with $A^{(0)}$ a complete local $k$-algebra, then every finitely generated projective $\Gamma$-graded $A$-module is free.  When $G = \GG_m$ and $A^G=k$, this is the well known statement (e.g., \cite[Thm.~19.2]{eisenbud}) that every finitely generated projective graded module over a Noetherian graded $k$-algebra $A = \bigoplus_{d \ge 0} A_d$ with $A_0 = k$ is free.
\end{remark}

\subsection{Tannaka duality}
The following Tannaka duality theorem proved by the second and third author is crucial in our argument.

\begin{theorem}\cite[Thm.~1.1]{hallj_dary_coherent_tannakian_duality} \label{T:tannakian}
Let $\cX$ be an excellent stack and let $\cY$ be a noetherian algebraic stack with affine stabilizers.  Then the natural functor
$$\Hom(\cX, \cY) \to \Hom_{r\otimes, \simeq}\bigl(\Coh(\cY), \Coh(\cX)\bigr)$$
 is an equivalence of categories, where $\Hom_{r\otimes, \simeq}(\Coh(\cY), \Coh(\cX))$ denotes the category whose objects are right exact monoidal functors $\Coh(\cY) \to \Coh(\cX)$ and morphisms are natural isomorphisms of functors. 
\end{theorem}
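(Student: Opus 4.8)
The plan is to show that the comparison functor is fully faithful and essentially surjective, reducing the geometry of the target $\cY$ by d\'evissage while keeping $\cX$ essentially arbitrary. For full faithfulness I would use that both sides of the comparison are compatible with smooth covers of $\cX$, reducing to the case where $\cX$ is affine and even complete local. In that setting a morphism $f\colon \cX\to\cY$ together with its $2$-automorphisms is determined by the right exact monoidal functor $f^{*}$: a monoidal natural isomorphism $f^{*}\simeq g^{*}$ should provide the gluing data producing a $2$-isomorphism $f\simeq g$, a statement that becomes close to formal once everything is pulled back to affine charts.

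The essential surjectivity is the substance, and I would prove it by induction on the target $\cY$. After Noetherian approximation, reducing $\cY$ to a stack of finite presentation over $\ZZ$, the base case is $\cY=\Spec B$ affine: here $\Hom(\cX,\cY)$ is the set of ring maps $B=\Gamma(\cY,\oh_{\cY})\to\Gamma(\cX,\oh_{\cX})$, and a right exact monoidal functor $\Coh(\cY)\to\Coh(\cX)$ determines such a map by its effect on global sections, recovering classical affine Tannaka duality. The next model is $\cY=BG$ for an affine group scheme $G$, where $\Coh(\cY)$ is the category of finitely generated representations and the right exact monoidal functor plays the role of a fibre functor; combining these two cases handles the quotient stacks $[\Spec B/G]$, which are exactly the local models made available by the affine-stabilizer hypothesis.

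The inductive step passes from these local models to a general $\cY$ by a closed--open decomposition $\cZ\subseteq\cY\supseteq\cU$: over $\cU$ the morphism exists by induction, and along $\cZ$ a right exact monoidal functor induces compatible functors on every infinitesimal neighborhood $\cY_{\cZ}^{[n]}$, which algebraize to a morphism out of the formal completion of $\cY$ along $\cZ$ by formal GAGA (Remark \ref{R:cc}, in the spirit of Theorem \ref{key-theorem}). One then glues the open and formal data across the formal gluing (Beauville--Laszlo) square to obtain an honest morphism $\cX\to\cY$ whose pullback recovers the given functor. Throughout, the affine-stabilizer and excellence hypotheses ensure that $\QCoh(\cY)$ is the ind-completion of $\Coh(\cY)$ and that the relevant pushforwards are well-behaved, so that the coherent functor extends to a colimit-preserving monoidal functor on quasi-coherent sheaves along which the reconstruction can be carried out.

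The main obstacle I expect is precisely this inductive reconstruction: producing a genuine morphism of stacks from a purely categorical monoidal functor when $\cY$ is neither affine nor a scheme. Two issues are entangled --- controlling the stacky directions so that the reconstructed morphism induces the correct maps on automorphism groups, which is what forces the affine-stabilizer hypothesis, and gluing the formal-local reconstructions into a single global morphism, which is governed by coherent completeness as established above. By contrast, the fully faithful part should be comparatively formal once the target is reduced to an affine chart and $\cX$ to an affine scheme.
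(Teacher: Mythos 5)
First, a structural point: the paper does not prove Theorem~\ref{T:tannakian} at all --- it is imported verbatim from \cite[Thm.~1.1]{hallj_dary_coherent_tannakian_duality}, so there is no internal proof to compare against. Judging your sketch against the actual argument of that reference, the full faithfulness reductions (descent to $\cX$ affine, then complete local) are in the right spirit, and the affine and $BG$ base cases are correctly identified as the elementary inputs. But the essential surjectivity argument has two genuine gaps. The d\'evissage on the \emph{target} is not available: a closed--open decomposition $\cZ \subseteq \cY \supseteq \cU$ with Beauville--Laszlo gluing is a technique for decomposing the \emph{source} of a morphism. Given only a right exact monoidal functor $\Coh(\cY) \to \Coh(\cX)$, there is no a priori way to split it into ``the part over $\cU$'' and ``the part over the completion along $\cZ$'': that splitting presupposes knowledge of the preimage decomposition of $\cX$, i.e., of the very morphism you are trying to construct. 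Worse, the claim that the affine-stabilizer hypothesis supplies local models $[\Spec B/G]$ for $\cY$ is circular: that local structure is precisely Theorem~\ref{T:field} of the present paper, it holds only at points with \emph{linearly reductive} stabilizer, and its proof uses Theorem~\ref{T:tannakian} as an input. A noetherian stack with affine stabilizers (e.g.\ with $\GG_a$-stabilizers in positive characteristic) need not admit any such presentation, so the induction has no base.

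The argument of \cite{hallj_dary_coherent_tannakian_duality} instead runs all reductions on the source: by descent and excellence one reduces to $\cX = \Spec A$ with $A$ a complete local noetherian ring; the composite tensor functor $\Coh(\cY) \to \Coh(\Spec A/\fm)$ is shown to arise from a point of $\cY$ (this is where the affine-stabilizer hypothesis does real work, in recognizing points from tensor functors to fields); one then lifts compatibly through the infinitesimal neighborhoods $\Spec A/\fm^{n+1}$ and algebraizes. A further subtlety your sketch elides is that the functors are only \emph{right} exact, not exact; already for $\cY = BG$ with $G$ non-reductive this requires care and is one of the points where the cited theorem goes beyond earlier Lurie-type statements, which assume quasi-affine diagonal or impose flatness/exactness conditions on the functor.
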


We will apply the following consequence of Tannaka duality:

\begin{corollary} \label{C:tannakian}
Let $\cX$ be an excellent algebraic stack with affine stabilizers and let $\cZ \subseteq \cX$ be a closed substack.  Suppose that $\cX$ is coherently complete along $\cZ$. 
If $\cY$ is a noetherian algebraic stack with affine stabilizers, then the natural functor
$$\Hom(\cX, \cY) \to \ilim_n \Hom\bigl(\cX_{\cZ}^{[n]}, \cY\bigr)$$
is an equivalence of categories.
\end{corollary}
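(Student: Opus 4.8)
The plan is to deduce the statement from Tannaka duality (Theorem \ref{T:tannakian}) and the coherent completeness hypothesis, by commuting a $2$-limit past the formation of the category of right exact monoidal functors. First I would record that each nilpotent thickening $\cX^{[n]}_{\cZ}$ is a closed substack of $\cX$, namely the one defined by the coherent ideal $\cI^{n+1}$, and is therefore again excellent with affine stabilizers; hence Theorem \ref{T:tannakian} applies with source $\cX^{[n]}_{\cZ}$ (and with source $\cX$) and target $\cY$. Applying it levelwise produces equivalences
$$\Hom\bigl(\cX^{[n]}_{\cZ},\cY\bigr) \simeq \Hom_{r\otimes,\simeq}\bigl(\Coh(\cY),\Coh(\cX^{[n]}_{\cZ})\bigr), \qquad \Hom(\cX,\cY) \simeq \Hom_{r\otimes,\simeq}\bigl(\Coh(\cY),\Coh(\cX)\bigr),$$
which are compatible with restriction on both sides. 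After passing to the $2$-limit over $n$, it therefore suffices to construct an equivalence
$$\ilim_n \Hom_{r\otimes,\simeq}\bigl(\Coh(\cY),\Coh(\cX^{[n]}_{\cZ})\bigr) \simeq \Hom_{r\otimes,\simeq}\bigl(\Coh(\cY),\Coh(\cX)\bigr)$$
that is compatible with the projections.

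The key step is to identify the left-hand side with $\Hom_{r\otimes,\simeq}\bigl(\Coh(\cY),\ilim_n \Coh(\cX^{[n]}_{\cZ})\bigr)$. The point is that giving a compatible system of right exact monoidal functors $F_n \colon \Coh(\cY) \to \Coh(\cX^{[n]}_{\cZ})$ amounts to the same thing as giving a single functor $F = (F_n)$ into the limit category $\ilim_n \Coh(\cX^{[n]}_{\cZ})$ that is monoidal and right exact. This rests on the observation that both cokernels and tensor products in the limit category are computed termwise: the transition functors are the restrictions along the closed immersions $\cX^{[n]}_{\cZ} \hookrightarrow \cX^{[n+1]}_{\cZ}$, which are right exact and symmetric monoidal and so preserve cokernels and tensor products. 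Consequently the projections $\pi_n$ jointly detect right exactness and the monoidal structure, and $F$ enjoys one of these properties precisely when every $F_n = \pi_n \circ F$ does. I expect this commutation to be the main obstacle, since it is the only place where one must argue carefully rather than formally: everything hinges on right exactness and the tensor structure being \emph{detected termwise} in the limit, which is exactly what lets "compatible system of right exact monoidal functors" coincide with "right exact monoidal functor into the limit."

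Finally I would invoke the coherent completeness hypothesis (Definition \ref{D:cc}), which furnishes a symmetric monoidal equivalence of abelian categories $\Coh(\cX) \xrightarrow{\sim} \ilim_n \Coh(\cX^{[n]}_{\cZ})$, the levelwise restriction functor; postcomposing with its quasi-inverse yields the desired equivalence between the two categories of right exact monoidal functors. To conclude I would check naturality: under this equivalence the projection $\pi_n$ corresponds to the restriction functor $\Coh(\cX) \to \Coh(\cX^{[n]}_{\cZ})$, which is precisely the map on Tannakian data induced by the closed immersion $\cX^{[n]}_{\cZ} \hookrightarrow \cX$. Tracing through the chain of equivalences then shows that the composite is the natural functor $\Hom(\cX,\cY) \to \ilim_n \Hom(\cX^{[n]}_{\cZ},\cY)$ of the statement, completing the proof.
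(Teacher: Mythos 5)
Your proposal is correct and follows essentially the same route as the paper: Tannaka duality on both sides, coherent completeness to identify $\Coh(\cX)$ with $\ilim_n\Coh(\cX^{[n]}_{\cZ})$, and the commutation of $\Hom_{r\otimes,\simeq}(\Coh(\cY),-)$ with the $2$-limit. The paper's proof is a four-line chain of equivalences that leaves the limit-commutation step unjustified; your observation that right exactness and the monoidal structure are detected termwise (because the transition functors are right exact and symmetric monoidal) is exactly the missing justification.
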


\begin{proof}
There are natural equivalences
\begin{align*}
\Hom(\cX, \cY) 	& \simeq \Hom_{r\otimes, \simeq}\bigl( \Coh(\cY), \Coh(\cX)\bigr)  & & \text{(Tannaka duality)}\\
				& \simeq \Hom_{r\otimes, \simeq}\bigl( \Coh(\cY), \ilim \Coh\bigl(\cX_{\cZ}^{[n]}\bigr) \bigr)  & & \text{(coherent completeness)}\\
				& \simeq \ilim \Hom_{r\otimes, \simeq}\bigl( \Coh(\cY), \Coh\bigl(\cX_{\cZ}^{[n]}\bigr) \bigr)  & & \\
				& \simeq \ilim \Hom\bigl(\cX_{\cZ}^{[n]}, \cY\bigr) & & \text{(Tannaka duality)}.\qedhere
\end{align*}
\end{proof}

 \section{Proofs of Theorems \ref{T:field} and \ref{T:smooth}} \label{S:proof-section}
 
 \subsection{The normal and tangent space of an algebraic stack} \label{S:tangent}
Let $\cX$ be a quasi-separated algebraic stack, locally of finite type over a field $k$, with affine stabilizers. Let $x \in \cX(k)$ be a closed point.  Denote by $i \co BG_x \to \cX$ the closed immersion of the residual gerbe of $x$, and by $\cI$ the corresponding ideal sheaf.  The {\it normal space to $x$} is $N_x := (\cI/\cI^2)^{\vee} = (i^* \cI)^{\vee}$ viewed as a $G_x$-representation. The {\it tangent space $T_{\cX,x}$ to $\cX$ at $x$} is the $k$-vector space of equivalence classes of pairs $(\tau, \alpha)$ consisting of morphisms $\tau \co \Spec k[\epsilon]/(\epsilon^2) \to \cX$ and 2-isomorphisms $\alpha \co x \to \tau|_{\Spec k}$.  The stabilizer $G_x$ acts linearly on the tangent space $T_{\cX,x}$ by precomposition on the 2-isomorphism.  
If $G_x$ is smooth, then there is an identification $T_{\cX,x} \cong N_x$ of $G_x$-representations.  Moreover, if $\cX = [X/G]$ is a quotient stack where $G$ is a smooth group scheme and $x \in X(k)$ (with $G_x$  not necessarily smooth), then $N_x$ is identified with the normal space  $T_{X,x} / T_{G \cdot x, x}$ to the orbit $G \cdot x$ at $x$.

 \subsection{The smooth case} \label{S:smooth}
 We now prove Theorem \ref{T:smooth} even though it follows directly from Theorem \ref{T:field} coupled with Luna's fundamental lemma \cite[p.~94]{luna}.
We feel that since the proof of Theorem \ref{T:smooth} is more transparent and less technical than Theorem \ref{T:field}, digesting the proof first in this case will make the proof of Theorem \ref{T:field} more accessible.

\begin{proof}[Proof of Theorem \ref{T:smooth}]
We may replace $\cX$ with an open neighborhood of $x$ and thus assume that
$\cX$ is noetherian. Define the quotient stack $\cN= [N_x/G_x]$, where $N_x$ is viewed as an affine scheme via $\Spec(\Sym N_x^{\vee})$.

Since $G_x$ is linearly reductive, we claim that there are compatible
isomorphisms $\cX_x^{[n]} \cong \cN^{[n]}$. 
To see this, first note that we can lift $\cX_x^{[0]}=BG_x$ to a unique morphism
$t_n\colon \cX_x^{[n]}\to BG_x$ for all $n$. Indeed, the obstruction to a lift from
$t_n\colon \cX_x^{[n]}\to BG_x$ to $t_{n+1}\colon \cX_x^{[n+1]}\to BG_x$ is an
element of the group $\Ext_{\oh_{BG_x}}^{1}(L_{BG_x/k}, \cI^{n+1}/\cI^{n+2})$
\cite{olsson-defn}, which is zero because $BG_x$ is cohomologically affine and
 $L_{BG_x/k}$ is a perfect complex supported in degrees $[0,1]$ as $BG_x\to
\Spec k$ is smooth.

In particular, $BG_x=\cX_x^{[0]}\inj \cX_x^{[1]}$ has a retraction. This implies
that $\cX_x^{[1]} \cong \cN^{[1]}$ since both are trivial deformations by the
same module. Since $\cN\to BG_x$ is smooth, the obstructions to lifting the
morphism $\cX_x^{[1]}\cong \cN^{[1]}\inj \cN$ to $\cX_x^{[n]}\to \cN$ for every $n$
vanish as
$H^1(BG_x,\Omega_{\cN/BG_x}^\vee\otimes \cI^{n+1}/\cI^{n+2})=0$.  We have induced
isomorphisms $\cX_x^{[n]} \cong \cN^{[n]}$ by
Proposition~\ref{P:closed/iso-cond:infinitesimal}.

Let $\cN  \to  N = N_x \gitq G_x$ be the good moduli space and denote by $0 \in N$ the image of the origin.     Set $\hat{\cN} := \Spec \hat{\oh}_{N,0} \times_N \cN$.  Since $\hat{\cN}$ is coherently complete (Theorem \ref{key-theorem}), we may apply Tannaka duality (Corollary \ref{C:tannakian}) to find a morphism $\hat{\cN} \to \cX$ filling in the diagram
\vspace{.2cm}
$$
\xymatrix{
\cX_x^{[n]} \cong \cN^{[n]} \ar[r] \ar@/^1.6pc/[rrr]	& \hat{\cN} \ar[r] \ar[d]	\ar@/^1pc/@{-->}[rr]					& \cN \ar[d] & \cX\\
& \Spec \hat{\oh}_{N,0} \ar[r]	\ar@{}[ur]|\square				& N.
}
$$

Let us now consider the functor $F \co (\Sch/N)^\op \to \Sets$ which assigns to a
morphism $S \to N$ the set of morphisms $S \times_N \cN \to \cX$ modulo
2-isomorphisms.  This functor is locally of finite presentation and we have an
element of $F$ over $\Spec \hat{\oh}_{N,0}$.  By Artin approximation
\cite[Cor.~2.2]{artin-approx} (cf.\ Theorem~\ref{T:artin-approximation}), there
exist an \'etale morphism $(U,u) \to
(N,0)$ where $U$ is an affine scheme and a morphism $f\colon (\cW,w):=(U
\times_N \cN, (u,0) ) \to (\cX,x)$ agreeing with $(\hat{\cN},0) \to (\cX,x)$ to
first order.  Since $\cX$ is smooth at $x$, it follows by
Proposition~\ref{P:closed/iso-cond:infinitesimal} that $f$ restricts to isomorphisms
$f^{[n]}\colon \cW_w^{[n]}\to \cX_x^{[n]}$ for every $n$, hence that $f$ is \'etale at $w$.
This establishes the theorem after shrinking $U$ suitably; the final statement
follows from Proposition \ref{P:refinement}
below.
\end{proof}

\subsection{The general case} \label{S:proof}

We now prove Theorem \ref{T:field} by a similar method to the proof in the smooth case but using equivariant Artin algebraization (Corollary \ref{C:equivariant-algebraization}) in place of Artin approximation.

\begin{proof}[Proof of Theorem \ref{T:field}]
We may replace $\cX$ with an open neighborhood of $x$ and thus assume that
$\cX$ is noetherian and that $x \in |\cX|$ is a closed point.

Let $\cN := [N_x / H]$ and let $N$ be the GIT quotient $N_x \gitq H$; then the induced morphism $\cN \to N$ is a good moduli space. Further, let $\hat{\cN} := \Spec \hat{\oh}_{N,0} \times_N \cN$, where $0$ denotes the image of the origin. 
Let $\eta_0 \co BH \to BG_x = \cX_x^{[0]}$ be the morphism induced from the inclusion $H \subseteq G_x$; this is a smooth (resp.\ \'etale) morphism.  We first prove by induction that there are compatible $2$-cartesian diagrams
\[
\xymatrix{\cH_n \ar[d]_{\eta_n} \ar@{(->}[r] & \cH_{n+1} \ar[d]^{\eta_{n+1}} \\
\cX_x^{[n]} \ar@{(->}[r] \ar@{}[ur]|\square& \cX_x^{[n+1]},}
\]
where $\cH_0 = BH$ and the vertical maps are
smooth (resp.\ \'etale).  Indeed, given $\eta_n \co \cH_n \to \cX_x^{[n]}$, by \cite{olsson-defn}, the obstruction to
the existence of $\eta_{n+1}$ is an element of $
\Ext^2_{\oh_{BH}}(\Omega_{BH/BG_x},\eta_0^*(\cI^{n+1}/\cI^{n+2}))$, but this group vanishes as $H$ is linearly reductive and $\Omega_{BH/BG_x}$ is a vector bundle.

Let $\tau_0 \co \cH_0 = BH \hookrightarrow \cN$ be the inclusion of the origin. Since $H$ is linearly reductive, the
deformation $\cH_0\inj \cH_1$ is a trivial extension with ideal $N_x^\vee$ and hence we
have
an isomorphism $\tau_1\colon \cH_1\cong \cN^{[1]}$ (see proof of smooth
case). Using linear reductivity of $H$ once again and deformation theory, we
obtain compatible morphisms $\tau_n \colon \cH_n \to \cN$ extending $\tau_0$
and $\tau_1$. These are closed immersions by
Proposition~\ref{P:closed/iso-cond:infinitesimal}~\itemref{PI:closed:infinitesimal}.

The closed immersion $\tau_n\colon \cH_n \inj \cN$ factors through a
closed immersion $i_n\colon \cH_n \inj \cN^{[n]}$. 
Since $\hat{\cN}$ is coherently complete, the inverse system of epimorphisms $\oh_{\cN^{[n]}} \to i_{n,*}\oh_{\cH_n}$, in the category $\varprojlim_n \Coh(\cN^{[n]})$, lifts uniquely to an epimorphism $\oh_{\hat{\cN}} \to \oh_{\hat{\cH}}$ in the category $\Coh(\hat{\cN})$. This defines a closed immersion $i\colon \hat{\cH} \to \hat{\cN}$ rendering the following square $2$-cartesian for all $n$:
\[
\xymatrix{
\cH_n \ar@{(->}[d]_{i_n} \ar@{(->}[r]	& \hat{\cH} \ar@{(->}[d]^i\\
\cN^{[n]} \ar@{(->}[r] 		& \hat{\cN}.
}
\]
Since $\hat{\cH}$ also is coherently complete, Tannaka duality (Corollary \ref{C:tannakian}) yields a morphism $\eta\co \hat{\cH} \to \cX$ such that the following square is
$2$-commutative for all $n$:
\[
\xymatrix{
\cH_n \ar[d]_{\eta_n} \ar@{(->}[r]	& \hat{\cH} \ar[d]^{\eta} \\
\cX_x^{[n]} \ar[r]					& \cX.
}
\]
The morphism $\eta$ is formally versal (resp.\ universal) by Proposition \ref{P:formal-versality-criterion}. We may therefore apply Corollary \ref{C:equivariant-algebraization} to obtain a stack $\cW=[\Spec A/H]$ together with a closed point $w\in |\cW|$, a morphism $f\co (\cW,w)\to (\cX,x)$
of finite type, a flat morphism $\varphi\co \hat{\cH}\to \cW$, identifying $\hat{\cH}$ with the
completion of $\cW$ at $w$, and a $2$-isomorphism $f\circ\varphi\cong \eta$. In particular, $f$ is smooth (resp.\ \'etale)
at $w$. Moreover, $(f\circ \varphi)^{-1}(\cX_x^{[0]})=\cH_0$ so we have a
flat morphism $BH=\cH_0\to f^{-1}(BG_x)$ which equals the inclusion of the
residual gerbe at $w$. It follows that $w$ is an isolated point in the fiber
$f^{-1}(BG_x)$. We can thus replace $\cW$ with an open neighborhood of $w$
such that $\cW\to \cX$ becomes smooth (resp.\ \'etale) and
$f^{-1}(BG_x) = BH$. Since $w$ is a closed point of $\cW$, we may further shrink
$\cW$ so that it remains cohomologically affine (see Lemma~\ref{L:shrink} below).

The final statement follows from Proposition \ref{P:refinement} below.
\end{proof}

 \subsection{The refinement}  The results in this section can be used to show
  that under suitable hypotheses, the quotient presentation $f\colon \cW \to \cX$ in Theorems \ref{T:field} and \ref{T:smooth}
can be arranged to be affine (Proposition \ref{P:refinement}), quasi-affine (Corollary \ref{C:refinement:quot_stack}), and representable (Proposition \ref{P:rep_diagonalizable}).

 The following trivial lemma will be frequently applied to a good moduli space
 morphism $\pi \colon \cX \to X$. Note that any closed subset $\cZ\subseteq
 \cX$ satisfies the assumption in the lemma in this case.
 \begin{lemma}\label{L:shrink}
   Let $\pi\colon \cX \to X$ be a closed morphism of topological spaces and
   let $\cZ\subseteq \cX$ be a closed subset. Assume that every open
   neighborhood of $\cZ$ contains $\pi^{-1}(\pi(\cZ))$. If $\cZ \subseteq \cU$
   is an open neighborhood of $\cZ$, then there exists an open neighborhood
   $U' \subseteq X$ of $\pi(\cZ)$ such that $\pi^{-1}(U') \subseteq \cU$.
 \end{lemma}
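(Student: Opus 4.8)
The plan is to produce $U'$ as the complement of an image under the closed map $\pi$. Specifically, I would set $U' := X \setminus \pi(\cX \setminus \cU)$. Since $\cU$ is open, its complement $\cX \setminus \cU$ is closed, and as $\pi$ is a closed map, $\pi(\cX \setminus \cU)$ is closed in $X$; hence $U'$ is open. It then remains to verify the two required properties: that $\pi(\cZ) \subseteq U'$ and that $\pi^{-1}(U') \subseteq \cU$.

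The inclusion $\pi^{-1}(U') \subseteq \cU$ is a purely formal manipulation that does not use the neighborhood hypothesis. Indeed, $\pi^{-1}(U') = \cX \setminus \pi^{-1}(\pi(\cX \setminus \cU))$, and since trivially $\cX \setminus \cU \subseteq \pi^{-1}(\pi(\cX \setminus \cU))$, taking complements gives $\pi^{-1}(U') \subseteq \cU$.

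The containment $\pi(\cZ) \subseteq U'$ is where the hypothesis enters, and I expect this to be the only genuine point. Applying the assumption to the open neighborhood $\cU$ of $\cZ$ itself yields $\pi^{-1}(\pi(\cZ)) \subseteq \cU$. I would then argue by contradiction: if some $z \in \pi(\cZ)$ also lay in $\pi(\cX \setminus \cU)$, I would choose a preimage $q \in \cX \setminus \cU$ with $\pi(q) = z$; since $z \in \pi(\cZ)$ this forces $q \in \pi^{-1}(\pi(\cZ)) \subseteq \cU$, contradicting $q \notin \cU$. Hence $\pi(\cZ)$ is disjoint from $\pi(\cX \setminus \cU)$, i.e.\ $\pi(\cZ) \subseteq U'$, as needed.

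There is no serious obstacle here; the statement is elementary point-set topology. The only subtleties worth flagging are structural: closedness of $\pi$ is exactly what guarantees that $U'$ is open, while the neighborhood hypothesis is used solely (and unavoidably) to keep $\pi(\cZ)$ inside $U'$. Everything else is formal set manipulation.
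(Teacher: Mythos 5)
Your proof is correct and is exactly the paper's argument: the paper's entire proof is the single line ``Take $U'=X\setminus \pi(\cX\setminus \cU)$,'' and your verification of openness, of $\pi^{-1}(U')\subseteq\cU$, and of $\pi(\cZ)\subseteq U'$ (via the neighborhood hypothesis applied to $\cU$) fills in precisely the details the paper leaves implicit.
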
 
\begin{proof}
Take $U'=X\setminus \pi(\cX\setminus \cU)$.
\end{proof}
We now come to our main refinement result.
\begin{proposition} \label{P:refinement}
 Let $f \co \cW \to \cX$ be a morphism of noetherian algebraic stacks such that $\cW$ is cohomologically affine with affine diagonal. Suppose $w \in |\cW|$ is a closed point such that $f$ induces an injection of stabilizer groups at $w$.  
 If $\cX$ has affine diagonal, then there exists a cohomologically affine open neighborhood $\cU \subseteq \cW$ of $w$ such that $f|_{\cU}$ is affine.
\end{proposition}
\begin{proof}
  By shrinking $\cW$, we may assume that $\Delta_{\cW/\cX}$ is
  quasi-finite and after further shrinking, we may arrange so that
  $\cW$ remains cohomologically affine (Lemma~\ref{L:shrink}). Let
  $p\colon V \to \stX$ be a smooth surjection, where $V$ is affine;
  then $p$ is affine because $\stX$ has affine diagonal. Take
  $f_V \colon \cW_V \to V$ to be the pullback of $f$ along
  $p$. Then $\cW_V \to \cW$ is affine, and so $\cW_V$ is
  cohomologically affine.  Since $\cW_V$ also has quasi-finite and
  affine diagonal, $f_V$ is separated
  \cite[Thm.~8.3.2]{alper-adequate}. By descent, $f$ is separated. In
  particular, the relative inertia of $f$,
  $I_{\cW/\cX} \to \cW$, is finite. By Nakayama's Lemma, there
  is an open substack $\cU$ of $\cW$, containing $w$, with trivial
  inertia relative to $\cX$. Thus $\cU\to \cX$ is quasi-compact,
  representable and separated. Shrinking $\cU$ further, $\cU$
  also becomes cohomologically affine. Since
  $\cX$ has affine diagonal, it follows that $f$ is also
  cohomologically affine. By Serre's Criterion
  \cite[Prop.~3.3]{alper-good}, $f$ is affine.
\end{proof}
\begin{corollary} \label{C:refinement:quot_stack} 
  Let $S$ be a noetherian scheme.  Let
  $f \co \cW \to \cX$ be a morphism of noetherian algebraic stacks over $S$
  such that $\cW$ is cohomologically affine with affine
  diagonal. Suppose $w \in |\cW|$ is a closed point such that $f$
  induces an injection of stabilizer groups at $w$.  If $\cX=[X/G]$
  where $X$ is an algebraic space and $G$ is an affine flat group
  scheme of finite type over $S$, then there exists a cohomologically affine open neighborhood
  $\cU \subseteq \cW$ of $w$ such that $f|_{\cU}$ is quasi-affine.
\end{corollary}
\begin{proof}
  Consider the composition $\cW \to [X/G] \to BG$. By Proposition
  \ref{P:refinement}, we may suitably shrink $\cW$ so that the
  composition $\cW \to [X/G] \to BG$ becomes affine. Since $X$ is a
  noetherian algebraic space, it has quasi-affine diagonal; in
  particular $[X/G] \to BG$ has quasi-affine diagonal. It follows
  immediately that $\cW \to [X/G]$ is quasi-affine.
\end{proof}
\begin{proposition}\label{P:rep_diagonalizable}
  Let $S$ be a noetherian scheme. Let $f\colon \cW \to \cX$ be a morphism of 
  locally noetherian algebraic stacks over $S$. Assume that $\cX$ has separated
  diagonal and that $\cW = [W/H]$, where $W$ is affine over $S$ and $H$ is 
  of multiplicative type over $S$. If $w\in W$ is fixed by $H$ and $f$ induces an injection 
  of stabilizer groups at $w$, then there exists an $H$-invariant affine open $U$ of $w$ in 
  $W$ such that $[U/H] \to \cX$ is representable.
\end{proposition}

\begin{remark}
The separatedness of the diagonal is essential; see Examples \ref{ex5} and \ref{ex4}.
\end{remark}
\begin{proof}
  There is an exact sequence of groups over $\cW$:
  \[
  \xymatrix{1 \ar[r] & I_{\cW/\cX} \ar[r] & I_{\cW/S} \ar[r] & I_{\cX/S}\times_{\cX} \cW.}
  \]
  Since $f$ induces an injection of stabilizer groups at $w$, it follows that
  $(I_{\cW/\cX})_w$ is trivial. Also, since $I_{\cX/S} \to \cX$ is separated,
  $I_{\cW/\cX} \to I_{\cW/S}$ is a closed immersion.

  Let $I=I_{\cW/\cX}\times_{\cW} W$ and pull the inclusion 
  $I_{\cW/\cX} \to I_{\cW/S}$ back to $W$.  Since $I_{\cW/S}\times_{\cW} W \to H\times_S W$ 
  is a closed immersion, it follows that $I \to H\times_S W$ is a closed immersion. Since $H\to S$ is of multiplicative type and $I_w$ is trivial,
  it follows that $I\to W$ is trivial in a neighborhood of $w$~\cite[Exp.~IX, Cor.~6.5]{sga3ii}. By shrinking 
  this open subset using Lemma \ref{L:shrink}, we obtain the result.  
\end{proof}

\section{Local applications} \label{S:local_applications}
\subsection{Generalization of Sumihiro's theorem on torus actions} \label{A:sumihiro}
In \cite[\S 2]{oprea}, Oprea speculates that every quasi-compact
Deligne--Mumford stack $\cX$ with a torus action has an equivariant \'etale
atlas $\Spec A \to \cX$. He proves this when
$\cX=\overline{\cM}_{0,n}(\PP^r,d)$ is the moduli space of stable maps and the action is induced by any
action of $\GG_m$ on $\PP^r$ and obtains some nice applications. We show that Oprea's speculation holds in general.

For group actions on stacks, we follow the conventions of \cite{romagny}.  
Let $T$ be a torus acting on an algebraic stack $\cX$, locally of finite type over a field $k$, via $\sigma \co T \times \cX \to \cX$.
 Let $\cY = [\cX /T]$.  Let $x \in \cX(k)$ be a point with image $y \in \cY(k)$.  There is an exact sequence
\begin{equation} \label{E:stab}
\xymatrix{1 \ar[r] & G_x \ar[r] &  G_y \ar[r] &  T_x \ar[r] & 1},
\end{equation}
where the stabilizer $T_x \subseteq T$ is defined by the fiber product
\begin{equation} \label{D:stab}
\begin{split} 
\xymatrix{
T_x\times B G_x \ar[r]^-{\sigma_x} \ar[d]				& B G_x \ar[d] \\
T\times B G_x \ar[r]^-{\sigma|_x}	\ar@{}[ur]|\square		& \cX
} \end{split}
\end{equation}
and $\sigma|_x \co T\times B G_x \xrightarrow{\id\times\iota_x} T \times \cX \xrightarrow{\sigma} \cX$.

 Observe that $G_y= \Spec k \times_{BG_x} T_x$.  The exact sequence \eqref{E:stab} is trivially split if and only if the induced action $\sigma_x$ of $T_x$ on $BG_x$ is trivial. The sequence is split if and only if the action $\sigma_x$ comes from a group homomorphism $T \to \Aut(G_x)$.
\begin{theorem} \label{T:sumi1}
Let $\cX$ be a quasi-separated algebraic (resp.\ Deligne--Mumford) stack with affine stabilizers, locally of finite type over an algebraically closed field $k$.  Let $T$ be a torus acting on $\cX$.  Let $x \in \cX(k)$ be a point such that $G_x$ is smooth and the exact sequence \eqref{E:stab} is split (e.g., $\cX$ is an algebraic space). There exists a $T$-equivariant smooth (resp.\ \'etale) neighborhood $(\Spec A,u) \to (\cX,x)$ that induces an isomorphism of stabilizers at $u$. 
\end{theorem}

\begin{proof}
Let $\cY = [\cX/T]$ and $y \in \cY(k)$ be the image of $x$.  
As the sequence \eqref{E:stab} splits, we can consider $T_x$ as a subgroup of $G_y$.  
By applying Theorem \ref{T:field} to $\cY$ at $y$ with respect to the subgroup $T_x \subseteq G_y$, we obtain a smooth (resp.\ \'etale) morphism $f \co [W/T_x] \to \cY$,
where $W$ is an affine scheme with an action of $T_x$, which induces the given inclusion $T_x \subseteq G_y$ at stabilizer groups at a preimage $w \in [W/T_x]$ of $y$. Consider the cartesian diagram
$$\xymatrix{
[W/T_x] \times_{\cY} \cX \ar[r] \ar[d]		& \cX \ar[d]\ar[r] & \Spec k\ar[d] \\
[W/T_x] \ar[r]						& \cY\ar[r] & BT
}$$
The map $[W/T_x]\to \cY\to BT$ induces the injection $T_x\inj T$ on stabilizers
groups at $w$. Thus, by
Proposition~\ref{P:refinement},
there is an
open neighborhood $\cU\subseteq [W/T_x]$ of $w$ such that $\cU$ is
cohomologically affine and $\cU\to BT$ is affine. The fiber product
$\cX\times_\cY \cU$ is thus an affine scheme $\Spec A$ and the induced map
$\Spec A\to \cX$ is $T$-equivariant. If $u\in \Spec A$ is a closed point above
$w$ and $x$, then the map $\Spec A\to \cX$ induces an isomorphism $T_x\to T_x$
of stabilizer groups at $u$.
\end{proof}

In the case that $\cX$ is a normal scheme, Theorem \ref{T:sumi1} was proved by
Sumihiro~\cite[Cor.~2]{sumihiro}, \cite[Cor.~3.11]{sumihiro2}; then $\Spec A
\to \cX$ can be taken to be an open neighborhood.  The nodal cubic with a
$\GG_m$-action provides an example where an \'etale neighborhood is needed:
there does not exist a
$\GG_m$-invariant affine open neighborhood of the node. Theorem~\ref{T:sumi1} was also known if
$\cX$ is a quasi-projective scheme \cite[Thm.~1.1(iii)]{brion-linearization} or
if $\cX$ is a smooth, proper, tame and irreducible Deligne--Mumford stack,
whose generic stabilizer is trivial and whose coarse moduli space is a scheme
\cite[Prop.~3.2]{skowera}.

\begin{remark} \label{R:not-split}
The theorem above fails when \eqref{E:stab} does not split because an equivariant, stabilizer-preserving, affine, and \'etale neighborhood induces a splitting. For a simple example when \eqref{E:stab} does not split, consider the Kummer exact sequence $1 \to \Gmu_n \to \GG_m \xrightarrow{n}  \GG_m \to 1$ for some invertible $n$. This gives rise to a $T=\GG_m$ action on the Deligne--Mumford stack $\cX=B\Gmu_n$ with stack quotient $\cY=[\cX/T]=B\GG_m$ such that  \eqref{E:stab} becomes the Kummer sequence and hence does not split. The action of $\GG_m$ on $B\Gmu_n$ has the following explicit description:  for $t \in \GG_m(S) = \Gamma(S, \oh_S)^\times$ and $(\cL, \alpha) \in B\Gmu_n(S)$ (where $\cL$ is a line bundle on $S$ and $\alpha \co \cL^{\tensor n} \to \oh_S$ is an isomorphism), then $t \cdot (\cL, \alpha) = (\cL, t \circ \alpha)$.

There is nevertheless an \'etale presentation $\Spec k \to B \Gmu_n$ which is equivariant under $\GG_m \xrightarrow{n} \GG_m$.  The following theorem shows that such \'etale presentations exist more generally.\end{remark}

\begin{theorem} \label{T:sumi2}
Let $\cX$ be a quasi-separated Deligne--Mumford stack, locally of finite type over an algebraically closed field $k$. Let $T$ be a torus acting on $\cX$.  If $x\in \cX(k)$, then there exist a reparameterization $\alpha \co T \to T$ and an \'etale neighborhood $(\Spec A, u) \to (\cX,x)$ that is equivariant with respect to $\alpha$.
\end{theorem}

\begin{proof}
In the exact sequence \eqref{E:stab}, $G_x$ is \'etale and $T_x$ is diagonalizable. This implies that
$(G_y)^0$ is diagonalizable. Indeed, first note that we have exact sequences:
\[
\xymatrix{1 \ar[r] & G_x\cap (G_y)^0 \ar[r] & (G_y)^0 \ar[r] & (T_x)^0 \ar[r] & 1\phantom{.}\\
1 \ar[r] & G_x\cap (G_y)^0 \ar[r] & (G_y)^0_\red \ar[r] & (T_x)^0_\red \ar[r] & 1.}
\]
The second sequence shows that $(G_y)^0_\red$ is a torus 
(as it is connected, reduced and surjects onto a torus with finite kernel) and, consequently,
that $G_x\cap (G_y)^0$ is diagonalizable. It then follows that $(G_y)^0$ is
diagonalizable from the first sequence~\cite[Exp.~XVII, Prop.~7.1.1~b)]{sga3ii}.

Theorem \ref{T:field} produces an \'etale neighborhood $f\colon ([\Spec
  A/(G_y)^0],w) \to (\cY,y)$ such that the induced morphism on stabilizers
groups is $(G_y)^0 \to G_y$. Replacing $\cX\to \cY$ with the pull-back along $f$,
we may thus assume that $G_y$ is connected and diagonalizable.

\newcommand{\tor}{\mathrm{tor}}%
If we let $G_y=D(N)$, $T_x=D(M)$ and $T=D(\ZZ^r)$, then we have a surjective
map $q\colon \ZZ^r\to M$ and an injective map $\varphi\colon M\to N$. The
quotient $N/M$ is torsion but without $p$-torsion, where $p$ is the
characteristic of $k$. Since all torsion of $M$ and $N$ is $p$-torsion, we
have that $\varphi$ induces an isomorphism of torsion subgroups. We can thus
find splittings of $\varphi$ and $q$ as in the diagram
\[
\xymatrix@C+10mm{
\mathllap{\ZZ^r=}\ZZ^s\oplus M/M_\tor\ar@{(->}[r]^{\alpha=\id\oplus\varphi_2}\ar@{->>}[d]^{q=q_1\oplus\id}
  & \ZZ^s\oplus N/N_\tor\mathrlap{=\ZZ^r}\ar@{->>}[d]^{q'=\varphi_1 q_1\oplus\id} \\
\mathllap{M=}M_\tor\oplus M/M_\tor\ar@{(->}[r]^{\varphi=\varphi_1\oplus \varphi_2}
  & N_\tor\oplus N/N_\tor\mathrlap{=N.}
}
\]
The map $q'$ corresponds to an embedding $G_y\inj T$ and the map
$\alpha$ to a reparameterization $T\to T$. After reparameterizing the
action of $T$ on $\cX$ via $\alpha$, the surjection $G_y\surj T_x$
becomes split. The result now follows from Theorem \ref{T:sumi1}.
\end{proof}

 We can also prove:

\begin{theorem} \label{T:sumi3}
Let $X$ be a quasi-separated algebraic space, locally of finite type over an algebraically closed field $k$. Let $G$ be an affine group scheme of finite type over $k$ acting on $X$.  Let $x \in X(k)$ be a point with linearly reductive stabilizer $G_x$.  Then there exists an affine scheme $W$ with an action of $G$ and a $G$-equivariant \'etale neighborhood $(W,w) \to (X,x)$ that induces an isomorphism of stabilizers at $w$.
\end{theorem}

\begin{proof}
By Theorem \ref{T:field}, there exists an \'etale neighborhood $f\colon (\cW,w) \to 
([X/G],x)$ such that $\cW$ is cohomologically affine, $f$ induces an isomorphism of 
stabilizers at $w$, and $w$ is a closed point.  By
Proposition~\ref{P:refinement}, we can assume after shrinking $\cW$ that 
the composition $\cW \to [X/G] \to BG$ is affine.  It follows that $W = \cW \times_{[X/G]} X$ is affine and that 
$W \to X$ is a $G$-equivariant \'etale neighborhood of $x$. If we also let
$w\in W$ denote the unique preimage of $x$, then $G_w=G_x$.
\end{proof}

Theorem \ref{T:sumi3} is a partial generalization of another result of
Sumihiro~\cite[Lem.~8]{sumihiro}, \cite[Thm.~3.8]{sumihiro2}. He proves
the existence of an open $G$-equivariant covering by quasi-projective
subschemes when $X$ is a normal scheme and $G$ is connected.

\subsection{Generalization of Luna's \'etale slice theorem} \label{A:luna}

We now provide a refinement of Theorem \ref{T:field} in the case that $\cX = [X/G]$ is a quotient stack, generalizing Luna's \'etale slice theorem.

\begin{theorem}\label{T:luna}
Let $X$ be a quasi-separated algebraic space, locally of finite type over an algebraically closed field $k$. Let $G$ be an affine smooth group scheme acting on $X$.  Let $x \in X(k)$ be a point with linearly reductive stabilizer $G_x$.   Then there exists an affine scheme $W$ with an action of $G_x$ which fixes a point $w$, and an unramified $G_x$-equivariant morphism $(W,w) \to (X,x)$ such that $\tilde{f} \co W \times^{G_x} G \to X$ is \'etale.\footnote{Here, $W \times^{G_x} G$ denotes the quotient $(W \times G) / G_x$.  Note that there is an identification of GIT quotients  $(W \times^{G_x} G) \gitq G \cong W \gitq G_x$.}

 If $X$ admits a good GIT quotient $X \to X \gitq G$, then it is possible to arrange that the induced morphism $W \gitq G_x \to X \gitq G$ is \'etale and $W \times^{G_x} G \cong W \gitq G_x \times_{X \gitq G} X$.
 
Let $N_x = T_{X,x} / T_{G \cdot x, x}$ be the normal space to the orbit at $x$; this inherits a natural linear action of $G_x$.  If $x \in X$ is smooth, then it can be arranged that there is an \'etale $G_x$-equivariant morphism $W \to N_x$ such that $W \gitq G_x \to N_x \gitq G_x$ is \'etale and 
 $$\xymatrix{
N_x \times^{G_x} G \ar[d] &  W \times^{G_x} G\ar[r]^-{\tilde f} \ar[d] \ar[l]	& X \\
N_x \gitq G_x & W \gitq G_x \ar[l]	\ar@{}[ul]|\square				& 
}$$
is cartesian.
\end{theorem} 
\begin{proof}
By applying Theorem \ref{T:sumi3}, we can find an affine scheme $X'$ with an action of $G$ and a $G$-equivariant, \'etale morphism $X' \to X$.  This reduces the theorem to the case when $X$ is affine, which was established in \cite[p.~97]{luna}, cf.\ Remark~\ref{R:luna}.
\end{proof}

\begin{remark}\label{R:luna}
The theorem above follows from Luna's \'etale slice theorem \cite{luna} if $X$ is affine. In this case, Luna's \'etale slice theorem is stronger than Theorem \ref{T:luna} as it asserts additionally that $W \to X$ can be arranged to be a locally closed immersion (which is obtained by choosing a $G_x$-equivariant section of $T_{X,x} \to N_x$ and then restricting to an open subscheme of the inverse image of $N_x$ under a $G_x$-equivariant \'etale morphism $X \to T_{X,x}$).
Note that while \cite{luna} assumes that $\mathrm{char}(k) = 0$ and $G$ is reductive, the argument goes through unchanged in arbitrary characteristic if $G$ is smooth, and $G_x$ is smooth and linearly reductive.  Moreover, with minor modifications, the argument in \cite{luna} is also valid if $G_x$ is not necessarily smooth.
\end{remark}

\begin{remark}\label{R:luna-alper-kresch}
More generally, if $X$ is a normal scheme, it is shown in \cite[\S 2.1]{alper-kresch} that $W \to X$ can be arranged to be a locally closed immersion.  However, when $X$ is not normal or is not a scheme, one cannot always arrange $W \to X$ to be a locally closed immersion and therefore we must allow unramified ``slices" in the theorem above.
\end{remark}

\subsection{Existence of equivariant versal deformations for curves} \label{A:mv_curve}

 By a \emph{curve}, we mean a proper scheme over $k$ of pure dimension one.
 An $n$-pointed curve is a curve $C$ together with $n$ points $p_1,\dots,p_n\in
 C(k)$. The points are not required to be smooth nor distinct. We introduce
 the following condition on $(C,\{p_j\})$:

 \begin{enumerate}[label=($\dagger$),ref=$\dagger$]
   \item\label{cond:g1-marked} every connected component of $C$ of arithmetic
     genus $1$ contains a point $p_j$.
 \end{enumerate}
 
 \begin{theorem} \label{T:curves}
 Let $k$ be an algebraically closed field and let $(C,\{p_j\})$ be an
 $n$-pointed reduced curve over $k$ satisfying \itemref{cond:g1-marked}.
 Suppose that a
 linearly reductive group scheme $H$ acts on $C$.
 If $\Aut(C,\{p_j\})$ is smooth, then there exist an affine scheme $W$ of finite type
 over $k$ with an action of $H$ fixing a point $w \in W$
 and a miniversal deformation
 $$\xymatrix{
 \cC \ar[d]	 				&  C \ar[l] \ar[d]\\
 W\ar@/^/[u]^{s_j} 			&  \Spec k\ar@/_/[u]_{p_j} \ar[l]_{w}\ar@{}[ul]|\square
 }$$
 of $C \cong \cC_w$ such that there exists an 
 action of $H$ on the total family $(\cC,\{s_j\})$ compatible with the actions of $H$ on $W$
 and $C$.
 \end{theorem}
 
 The theorem above was proven for Deligne--Mumford semistable curves in \cite{alper-kresch}.

\begin{proof}
The stack $\mathcal{U}_n$ parameterizing all $n$-pointed proper schemes of
dimension $1$ is algebraic and quasi-separated~\cite[App.~B]{smyth_towards-a-classification-modular-compactifications}.
The substack $\mathfrak{M}_n\subset \mathcal{U}_n$ parameterizing reduced
$n$-pointed curves is open and the substack $\mathfrak{M}^\dagger_n\subset
\mathfrak{M}_n$, parameterizing reduced $n$-pointed curves satisfying
\itemref{cond:g1-marked} is open and closed.

We claim that $\mathfrak{M}^\dagger_n$ has affine stabilizers. To see this, let
$(C,\{p_j\})$ be an $n$-pointed curve satisfying \itemref{cond:g1-marked}.
The stabilizer of $(C,\{p_j\})$ is a closed subgroup $\Aut(C,
\{p_j\})\subseteq \Aut(\tilde{C}, Z)$ where $\eta\colon \tilde{C}\to C$ is the normalization, $Z=\eta^{-1}(\Sing C\cup
\{p_1,p_2,\dots,p_n\})$ with the reduced structure and $\Aut(\tilde{C},Z)$ denotes the automorphisms of $\tilde{C}$ that maps $Z$ onto $Z$. Since $\Aut(\pi_0(\tilde{C}))$ is finite, it is
enough to prove that $\Aut(\tilde{C}_i,Z\cap \tilde{C}_i)$ is affine for every component $\tilde{C}_i$ of $\tilde{C}$. This holds
since \itemref{cond:g1-marked} guarantees that either $g(\tilde{C}_i)\neq 1$ or
$Z\cap \tilde{C}_i\neq \emptyset$.

Since $H$ is linearly reductive and $\Aut(C,\{p_j\})/H$ is smooth, Theorem \ref{T:field} provides an affine scheme $W$ with an action of $H$, a $k$-point $w \in W$ fixed by $H$ and a smooth map $[W/H] \to \mathfrak{M}^\dagger_n$ with $w$ mapping to $(C, \{p_j\})$.  This yields a family of $n$-pointed curves $\cC \to W$ with an action of $H$ on $\cC$ compatible with the action on $W$ and $C \cong \cC_w$.  The map $W \to \mathfrak{M}^\dagger_n$ is smooth and adic at $w$. Indeed, it is flat by construction and the fiber at $(C,\{p_j\})$ is $\Spec k\to B\Aut(C,\{p_j\})$ which is smooth. In particular, the tangent space of $W$ at $w$ coincides with the tangent space of $\mathfrak{M}^\dagger_n$ at $(C,\{p_j\})$; that is, $\cC \to W$ is a miniversal deformation of $C$ (see Remark~\ref{R:miniversal}).
\end{proof}

\begin{remark}
From the proof, it is clear that Theorem \ref{T:curves} is valid for pointed curves such that every deformation has an affine automorphism group.  It was pointed out to us by Bjorn Poonen that if $(C, \{p_j\})$ is an $n$-pointed curve  and no connected component of $C_{\red}$ is a smooth unpointed curve of genus 1, then $\Aut(C, \{p_j\})$ is an affine group scheme over $k$.  It follows that Theorem \ref{T:curves} is valid for pointed curves $(C, \{p_j\})$ satisfying the property that for every deformation $(C', \{p'_j\})$ of $(C, \{p_j\})$, there is no connected component of $C'_{\red}$ which is a smooth unpointed curve of genus 1.
\end{remark}

In a previous version of this article, we erroneously claimed that if no
connected component of $C_{\red}$ is a smooth unpointed curve of genus 1, then
this also holds for every deformation of $C$. The following example shows that
this is not the case.

\begin{example}
Let $S=\AA^1_\CC=\Spec \CC[t]$, let $S'=\Spec \CC[t,x]/(x^2-t^2)$ and let
$\cC=E\times_\CC S'$ where $E$ is a smooth genus $1$ curve and $\cC\to S'\to S$
is the natural map. Then the fiber over $t=0$ is $E\times_\CC \Spec
\CC[x]/(x^2)$ and the fiber over $t=1$ is $E\amalg E$. Choosing a point
$p\in E$ and a section of $S'\to S$, e.g., $x=t$, gives a section $s$ of $\cC$
which only passes through one of the two components over $t=1$. In particular,
the fiber of $(\cC,s)$ over $t=1$ does not have affine automorphism
group. Alternatively, one could in addition pick a pointed curve $(C,c)$ of
genus at least $1$ and glue $\cC$ with $C\times S$ along $s$ and $c$. This
gives an unpointed counterexample.
\end{example}

\begin{remark}  If $\cC \to S$ is a family of pointed curves such that there is no connected component of any fiber whose reduction is a smooth unpointed curve of genus 1, then the automorphism group scheme $\Aut(\cC/S) \to S$ of $\cC$ over $S$ has affine fibers but need not be affine (or even quasi-affine).  This even fails for families of Deligne--Mumford semistable curves; see \cite[\S4.1]{alper-kresch}.
\end{remark}

\subsection{Good moduli spaces} \label{A:gms_app}
In the following result, we determine the \'etale-local structure of good moduli space morphisms. 
 \begin{theorem}  \label{T:consequences-gms} Let $\cX$ be a noetherian algebraic stack over an algebraically closed field $k$. Let $\cX \to X$ be a good moduli space with affine diagonal.
If $x\in \cX(k)$ is a
   closed point, then there exists an affine scheme $\Spec A$ with an action of $G_x$  and a cartesian diagram
   $$\xymatrix{
  	[\Spec A / G_x] \ar[r] \ar[d]				& \cX \ar[d]^{\pi} \\
	\Spec A \gitq G_x \ar[r]\ar@{}[ur]|\square	& X
  }$$ such that $\Spec A \gitq G_x \to X$ is an \'etale neighborhood of $\pi(x)$. 
\end{theorem}

In the proof of Theorem \ref{T:consequences-gms} we will use the following minor variation of \cite[Thm.~6.10]{alper-quotient}.  We provide a direct proof here for the convenience of the reader.
\begin{proposition}[Luna's fundamental lemma] \label{P:luna}
Let $f \co \cX \to \cY$ be an \'etale, separated and representable morphism of noetherian algebraic stacks such that there is a commutative diagram
$$\xymatrix{
\cX \ar[r]^f \ar[d]_{\pi_{\cX}}		& \cY \ar[d]^{\pi_{\cY}} \\
X \ar[r]						& Y
}$$
where $\pi_{\cX}$ and $\pi_{\cY}$ are good moduli spaces. Let $x \in |\cX|$ be a closed point.  If $f(x) \in |\cY|$ is closed and $f$ induces an isomorphism of stabilizer groups at $x$, then there exists an open neighborhood $U \subset X$ of $\pi_{\cX}(x)$ such that $U \to X \to Y$ is \'etale and $\pi_{\cX}^{-1}(U) \cong U \times_Y \cY$.
\end{proposition}

\begin{proof}
By Zariski's main theorem \cite[Cor.~16.4(ii)]{MR1771927}, there is a factorization $\cX \hookrightarrow \widetilde{\cX} \to \cY$, where $\cX \hookrightarrow \widetilde{\cX}$ is an open immersion and $\widetilde{\cX} \to \cY$ is finite. There is a good moduli space $\pi_{\widetilde{\cX}} \co \widetilde{\cX} \to \widetilde{X}$ such that the induced morphism $\widetilde{X}\to Y$ is finite \cite[Thm.\ 4.16(x)]{alper-good}. Note that $x\in |\widetilde{\cX}|$ is closed. By Lemma~\ref{L:shrink} we may thus replace $\cX$ with an open neighborhood $\cU \subset \cX$ of $x$ that is saturated with respect to $\pi_{\widetilde{\cX}}$ (i.e., $\cU =  \pi_{\widetilde{\cX}}^{-1}(\pi_{\widetilde{\cX}}(\cU))$). Then $X\to \widetilde{X}$ becomes an open immersion so that $X\to Y$ is quasi-finite.

Further, the question is \'etale local on $Y$. Hence, we may assume that $Y$ is the spectrum of a strictly henselian local ring with closed point $\pi_{\cY}(f(x))$.   Since $Y$ is henselian, after possibly shrinking $X$ further, we can arrange that $X \to Y$ is finite with $X$ the spectrum of a local ring with closed point $\pi_{\cX}(x)$. Then $X\to \widetilde{X}$ is a closed and open immersion, hence so is $\cX\to \widetilde{\cX}$. It follows that $\cX\to \cY$ is finite.
As $f$ is stabilizer-preserving at $x$ and $Y$ is strictly henselian, $f$ induces an isomorphism of residual gerbes at $x$.  We conclude that $f$ is a finite, \'etale morphism of degree $1$, hence an isomorphism.
\end{proof}

\begin{proof}[Proof of Theorem \ref{T:consequences-gms}]
By Theorem \ref{T:good-finite-type}, $\cX \to X$ is of finite type.  We may assume that $X=\Spec R$, where $R$ is a noetherian $k$-algebra. By
  noetherian approximation along $k \to R$, there is a finite type $k$-algebra $R_0$ and an 
  algebraic stack $\cX_0$ of finite type over $\Spec R_0$ with affine diagonal such that $\cX \simeq \cX_0 \times_{\Spec R_0} \Spec R$. We may 
  also arrange that the image $x_0$ of $x$ in $\cX_0$ is closed with linearly reductive 
  stabilizer $G_x$. We now apply Theorem \ref{T:field} to find a pointed affine \'etale 
  $k$-morphism $f_0 \colon ([\Spec A_0/G_x],w_0) \to (\cX_0,x_0)$ that induces an 
  isomorphism of stabilizers at~$w_0$. Pulling this back along $\Spec R \to \Spec R_0$, we 
  obtain an affine \'etale morphism $f \colon [\Spec A/G_x] \to \cX$ inducing an 
  isomorphism of stabilizers at all points lying over the preimage of $w_0$. The result now 
  follows from Luna's fundamental lemma for stacks (Proposition \ref{P:luna}).
\end{proof}

The following corollary answers negatively a question of Geraschenko--Zureick-Brown~\cite[Qstn.\ 32]{geraschenko-brown}:  does there exist an algebraic stack, with affine diagonal and good moduli space a field, that is not a quotient stack?  In the equicharacteristic setting, this result also settles a conjecture of theirs: formal GAGA holds for good moduli spaces with affine diagonal~\cite[Conj.\ 28]{geraschenko-brown}. The general case will be treated in forthcoming work \cite{ahr2}.
\begin{corollary}\label{C:gb-c28}
Let $\cX$ be a noetherian algebraic stack over a field $k$ (not assumed to be algebraically closed) with affine diagonal. Suppose that there exists a good moduli space $\pi \colon \cX \to \Spec R$, where $(R,\fm)$ is a complete local ring.
\begin{enumerate}
\item \label{C:gb-c28:res} Then $\cX\cong[\Spec B/\GL_n]$; in particular, $\cX$  has the resolution property; and
\item \label{C:gb-c28:fGAGA} the natural functor
  \[
  \Coh(\cX) \to \ilim \Coh\bigl( \cX \times_{\Spec R} \Spec
  R/\fm^{n+1}\bigr)
  \]
  is an equivalence of categories.
\end{enumerate}
\end{corollary}
\begin{proof}
    By \cite[Thm.~1]{geraschenko-brown}, we have \itemref{C:gb-c28:res}$\implies$\itemref{C:gb-c28:fGAGA}; thus, it suffices to prove \itemref{C:gb-c28:res}. 

  If $R/\fm 
  =k$ and $k$ is algebraically closed, then $\cX=[\Spec A/G_x]$ by Theorem 
  \ref{T:consequences-gms}. Embed $G_x \subseteq \GL_{N,k}$ for some $N$.
  Then $\cX=[U/\GL_{N,k}]$ where $U=(\Spec A \times \GL_{N,k})/G_x$ is an
  algebraic space. Since $U$ is affine over $\cX$ it is cohomologically affine,
  hence affine by Serre's criterion \cite[Prop.~3.3]{alper-good}.
  In this case, \itemref{C:gb-c28:res} holds even if $R$ is not
  complete but merely henselian.

  If $R/\fm=k$ and $k$ is not algebraically closed, then we proceed as follows. Let 
  $\overline{k}$ be an algebraic closure of $k$. By \cite[$0_{\mathrm{III}}$.10.3.1.3]{EGA}, 
  $\overline{R}=R\otimes_k \overline{k}=\varinjlim_{k \subseteq k' \subseteq \overline{k}} R\tensor_k k'$ is a noetherian local ring with maximal ideal $\overline{\fm} = 
  \fm\overline{R}$ and residue field $\overline{R}/\overline{\fm}\cong \overline{k}$, and the induced 
  map $R/\fm \to \overline{R}/\overline{\fm}$ coincides with $k \to \overline{k}$. Since each $R\otimes_k k'$ is henselian, $\overline{R}$ is henselian.
  By the case considered above, there is a vector bundle $\overline{\shv{E}}$ on
  $\cX_{\overline{k}}$ such that the associated frame bundle is an algebraic space (even
  an affine scheme). Equivalently, for every geometric point $y$ of $\cX$,
  the stabilizer $G_y$ acts faithfully on $\overline{\shv{E}}_y$, cf.\
  \cite[Lem.~2.12]{ehkv}.

  We can find a finite extension
  $k \subseteq k' \subseteq \overline{k}$ and a vector bundle $\shv{E}$ on
  $\cX_{k'}$ that pulls back to $\overline{\shv{E}}$. If $p\colon \cX_{k'}\to \cX$
  denotes the natural map, then $p_*\shv{E}$ is a vector bundle and the counit map
  $p^*p_*\shv{E}\to \shv{E}$ is surjective. In particular, the stabilizer
  actions on $p_*\shv{E}$ are also faithful so the frame bundle $U$ of
  $p_*\shv{E}$ is an algebraic space
  and $\cX=[U/\GL_{N'}]$ is a quotient stack. 
  Since $\cX$ is cohomologically affine
  and $U \to [U/\GL_{N'}]$ is affine, $U$ is affine by Serre's criterion
  \cite[Prop.~3.3]{alper-good}.
  
  In general, let $K=R/\fm$. Since $R$ is a complete $k$-algebra, it admits a coefficient 
  field; thus, it is also a $K$-algebra. We are now free to replace $k$ with 
  $K$ and the result follows.
\end{proof}
\begin{remark}  If $k$ is algebraically closed, then in Corollary \ref{C:gb-c28}\itemref{C:gb-c28:res} above, $\cX$ is in fact isomorphic to a quotient stack $[\Spec A / G_x]$ where $G_x$ is the stabilizer of the unique closed point.  If in addition $\cX$ is smooth, then $\cX \cong [N_x/G_x]$ where $N_x$ is the normal space to $x$ (or equivalently the tangent space of $\cX$ at $x$ if $G_x$ is smooth).
\end{remark}
\subsection{Existence of coherent completions}  \label{A:coherent-completion}
Recall that a \emph{complete local stack} is an excellent local stack $(\cX,x)$
with affine stabilizers such that $\cX$ is coherently complete along the
residual gerbe $\cG_x$ (Definition~\ref{D:complete-local-stack}).

The \emph{coherent completion} of a noetherian stack $\cX$ at a point $x$
is a complete local stack $(\hat{\cX}_x,\hat{x})$ together with a morphism $\eta\colon (\hat{\cX}_x,\hat{x}) \to (\cX,x)$ inducing isomorphisms of $n$th infinitesimal neighborhoods of $\hat{x}$ and $x$. If $\cX$ has affine stabilizers, then the pair $(\hat{\cX}_x,\eta)$ is unique up to unique $2$-isomorphism by Tannaka duality (Corollary~\ref{C:tannakian}).

The next 
result asserts that the coherent completion always exists under very mild hypotheses.  

\begin{theorem} \label{T:complete}
Let $\cX$ be a quasi-separated algebraic stack with affine stabilizers, locally of finite type over an algebraically closed field $k$. For any point $x \in \cX(k)$ with linearly reductive stabilizer $G_x$, the coherent completion $\hat{\cX}_x$ exists.
\begin{enumerate}
  \item \label{T:complete:excellent}
    The coherent completion is an excellent quotient stack
    $\hat{\cX}_x=[\Spec B/G_x]$, and is
    unique up to unique $2$-isomorphism. The invariant ring $B^{G_x}$ is
    the completion of an algebra of finite type over $k$ and $B^{G_x}\to B$
    is of finite type.
  \item \label{T:complete:etale-pres}
    If $f \co (\cW, w) \to (\cX,x)$ is an \'etale morphism, such that $\cW=[\Spec A/G_x]$, the point $w\in |\cW|$ is closed and $f$ induces an isomorphism of stabilizer groups at $w$; then $\hat{\cX}_x = \cW \times_W \Spec \hat{\oh}_{W,\pi(w)}$, where $\pi \co \cW \to W = \Spec A^{G_x}$ is the morphism to the GIT quotient.
  \item \label{T:complete:gms}
	If $\pi \co \cX \to X$ is a good moduli space with affine diagonal, then $\hat{\cX}_x = \cX \times_X \Spec \hat{\oh}_{X,\pi(x)}$.
\end{enumerate}
\end{theorem}

\begin{proof}
Theorem \ref{T:field} gives an \'etale morphism $f \co (\cW, w) \to (\cX,x)$, where $\cW=[\Spec A/G_x]$ and $f$ induces an isomorphism of stabilizer groups at the closed point $w$.  The main statement and Parts \eqref{T:complete:excellent} and \eqref{T:complete:etale-pres} follow by taking $\hat{\cX}_x = \cW \times_W \Spec \hat{\oh}_{W,\pi(w)}$ and $B=A\otimes_{A^{G_x}} \widehat{A^{G_x}}$.
Indeed, $\hat{\cX}_x=[\Spec B/G_x]$ is coherently complete by Theorem~\ref{key-theorem} and $B$ is excellent since it is of finite type over the complete local ring $B^{G_x}=\widehat{A^{G_x}}$.  Part \eqref{T:complete:gms} follows from \eqref{T:complete:etale-pres} after applying Theorem~\ref{T:consequences-gms}.
\end{proof}

\begin{remark} \label{R:miniversal-completion-finite-type}
With the notation of Theorem~\ref{T:complete}~\itemref{T:complete:etale-pres}, observe that if $G_x$ is smooth, then $(\Spec A,w)\to (\cX,x)$ is smooth and adic so the formal miniversal deformation space of $x$ is $\hat{\Def}(x) = \Spf \hat{A}$ where $\hat{A}$ denotes the completion of $A$ at the $G_x$-fixed point $w$ (see Remark~\ref{R:miniversal}).  The stabilizer $G_x$ acts on $\Spf \hat{A}$ and its versal object, and it follows from Theorem \ref{key-theorem} that there is an identification $\hat{A}^{G_x} = \widehat{A^{G_x}}$. In particular, $\hat{A}^{G_x}$ is the completion of a $k$-algebra of finite type.
\end{remark}

\begin{remark} \label{R:uniqueness}
If there exists an \'etale neighborhood $f \co \cW=[\Spec A/G_x] \to \cX$ of $x$ such that $A^{G_x} = k$, then the pair $(\cW, f)$ is unique up to unique 2-isomorphism.   This follows from Theorem \ref{T:complete} as $\cW$ is the coherent completion of $\cX$ at $x$. 
\end{remark}

The \emph{henselization of $\cX$ at $x$} is the stack
$\cX^h_x=\cW\times_W \Spec (A^{G_x})^h$. This stack also satisfies a
universal property (initial among pro-\'etale neighborhoods of the residual
gerbe at $x$) and will be treated in forthcoming work \cite{ahr2}.

\subsection{\'Etale-local equivalences}  \label{A:etale}
Before we state the next result, let us recall that if $\cX$ is an algebraic stack, and $x \in \cX(k)$ is a point, then a formal miniversal deformation space of $x$ is a one-point affine formal scheme $\hat{\Def}(x)$, together with a formally smooth morphism $\hat{\Def}(x) \to \cX$ that is an isomorphism on tangent spaces at $x$, see Remark~\ref{R:miniversal-completion-finite-type}.  If the stabilizer group scheme $G_x$ is smooth and linearly reductive, then $\hat{\Def}(x)$ inherits an action of $G_x$.

\begin{theorem} \label{T:etale}
  Let $\cX$ and $\cY$ be quasi-separated algebraic stacks with affine stabilizers, locally of finite type over an algebraically closed field $k$.  Suppose $x \in \cX(k)$ and $y \in \cY(k)$ are points with smooth linearly reductive stabilizer group schemes $G_x$ and $G_y$, respectively.  Then the following are equivalent:
  \begin{enumerate}
  	\item\label{TI:etale:miniversal}
      There exist an isomorphism $G_x \to G_y$ of group schemes and an isomorphism $\hat{\Def}(x) \to \hat{\Def}(y)$ of formal miniversal deformation spaces which is equivariant with respect to $G_x \to G_y$.
  	\item\label{TI:etale:completion}
	  There exists an isomorphism $\hat{\cX}_x \to \hat{\cY}_y$.
  	\item\label{TI:etale:etale}
	  There exist an affine scheme $\Spec A$ with an action of $G_x$, a point $w \in \Spec A$ fixed by $G_x$, and a diagram of
      \'etale morphisms
	$$\xymatrix{	
			& [\Spec A /G_x] \ar[ld]_f \ar[rd]^g \\
		\cX	&  & \cY
	}$$
	such that $f(w) = x$ and $g(w) = y$, and both $f$ and $g$ induce isomorphisms of stabilizer groups at $w$.
  \end{enumerate}
If additionally $x \in |\cX|$ and $y \in |\cY|$ are smooth, then the conditions above are equivalent to the existence of an isomorphism $G_x \to G_y$ of group schemes and an isomorphism $T_{\cX,x} \to T_{\cY,y}$ of tangent spaces which is equivariant under $G_x \to G_y$.
\end{theorem}

\begin{remark}  If the stabilizers $G_x$ and $G_y$ are not smooth, then the theorem above remains true (with the same argument) if the formal miniversal deformation spaces are replaced with formal completions of equivariant flat adic presentations (Definition \ref{D:adic}) and the tangent spaces are replaced with normal spaces.  Note that the composition $\Spec A \to [\Spec A/ G_x] \to \cX$ produced by Theorem \ref{T:field} is a $G_x$-equivariant flat adic presentation.
\end{remark}

\begin{proof}[Proof of Theorem \ref{T:etale}]
The implications \itemref{TI:etale:etale}$\implies$\itemref{TI:etale:completion}$\implies$\itemref{TI:etale:miniversal} are immediate.  We also have \itemref{TI:etale:miniversal}$\implies$\itemref{TI:etale:completion} as $\cX_x^{[n]} = [\hat{\Def}(x)^{[n]} / G_x]$ and $\cY_y^{[n]} = [\hat{\Def}(y)^{[n]} / G_y]$.  We now show that \itemref{TI:etale:completion}$\implies$\itemref{TI:etale:etale}. We are given an isomorphism $\alpha \co\hat{\cX}_x \iso \hat{\cY}_y$. Let
$f \co (\cW,w)\to (\cX,x)$ be an \'etale neighborhood as in
Theorem~\ref{T:field}, that is, $\cW = [\Spec A/G_x]$ and $f$ induces an isomorphism of stabilizer groups at the closed point $w$. Let $W=\Spec A^{G_x}$ denote the good moduli space of
$\cW$ and let $w_0$ be the image of $w$. Then $\hat{\cX}_x=\cW\times_W \Spec
\hat{\oh}_{W,w_0}$.  The functor $F\co (T\to W)\mapsto \Hom(\cW\times_W T,\cY)$
is locally of finite presentation. Artin approximation applied to $F$ and
$\alpha\in F(\Spec \hat{\oh}_{W,w_0})$ thus gives an \'etale morphism
$(W',w')\to (W,w)$ and a morphism $\varphi\co \cW':=\cW\times_W W'\to \cY$ such
that $\varphi^{[1]}\co \cW'^{[1]}_{w'}\to \cY_y^{[1]}$ is an isomorphism.
Since $\hat{\cW'}_{w'}\cong \hat{\cX}_x\cong \hat{\cY}_y$, it
follows that $\varphi$ induces an isomorphism $\hat{\cW'}\to \hat{\cY}$ by
Proposition~\ref{P:closed/iso-cond:complete}~\itemref{PI:iso:complete}. After replacing $W'$ with an open
neighborhood we thus obtain an \'etale morphism $(\cW',w')\to (\cY,y)$.
The final statement is clear from Theorem \ref{T:smooth}.
\end{proof}

\subsection{The resolution property holds \'etale-locally}\label{A:resolution-property-etale}
In~\cite[Def.~2.1]{rydh-noetherian}, an algebraic stack $\cX$
is said to be of \emph{global type} (resp.\ \emph{s-global type}) if there
is a representable (resp.\ representable and separated) \'etale surjective
morphism $p\colon [V/\GL_n]\to \cX$ of finite presentation where $V$ is
quasi-affine. That is, the resolution property holds for $\cX$
\'etale-locally. We will show that if $\cX$ has linearly reductive stabilizers
at closed points and affine diagonal, then $\cX$ is of s-global type. We begin
with a more precise statement.

\begin{theorem}\label{T:global-type}
Let $\cX$ be a quasi-separated algebraic stack, of finite type over
a perfect (resp.\ arbitrary) field $k$, with
affine stabilizers. Assume that for every closed point $x\in |\cX|$, the unit
component $G_x^0$ of the stabilizer group scheme $G_x$ is linearly reductive.
Then there exists
\begin{enumerate}
\item a finite field extension $k'/k$;
\item a linearly reductive group scheme $G$ over $k'$;
\item a finitely generated $k'$-algebra $A$ with an action of $G$; and
\item an \'etale (resp.\ quasi-finite flat) surjection
  $p\colon [\Spec A/G] \to \cX$.
\end{enumerate}
Moreover, if $\cX$ has affine diagonal, then $p$ can be arranged to be affine.
\end{theorem}
\begin{proof}
First assume that $k$ is algebraically closed. Since $\cX$ is quasi-compact,
Theorem~\ref{T:field} gives an \'etale surjective morphism
$q\co [U_1/G_1]\amalg\dots\amalg [U_n/G_n]\to \cX$ where $G_i$ is a linearly
reductive group scheme over $k$ acting on an affine scheme $U_i$. If we let
$G=G_1\times G_2\times\dots\times G_n$ and let $U$ be the disjoint union of the
$U_i\times G/G_i$, we obtain an \'etale surjective morphism $p\co [U/G]\to \cX$.
If $\cX$ has affine diagonal, then we can assume that $q$, and hence $p$, are
affine.

For general $k$, write the algebraic closure $\overline{k}$ as a union of its
finite subextensions $k'/k$. A standard limit argument gives a solution over
some $k'$ and we compose this with the \'etale (resp.\ flat) morphism
$\cX_{k'}\to \cX$.
\end{proof}

\begin{corollary}\label{C:s-global-type}
Let $\cX$ be an algebraic stack with affine diagonal and of finite type over a
field $k$ (not necessarily algebraically closed). Assume that for every closed
point $x\in |\cX|$, the unit component $G_x^0$ of the stabilizer group scheme
$G_x$ is linearly reductive. Then $\cX$ is of s-global type.
\end{corollary}
\begin{proof}
By Theorem~\ref{T:global-type} there is an affine,
quasi-finite and faithfully flat morphism $\cW\to \cX$ of finite
presentation where $\cW=[\Spec A/G]$ for a linearly reductive group scheme
$G$ over $k'$. If we choose an embedding $G\inj \GL_{n,k'}$, then we can
write $\cW=[\Spec B/\GL_n]$, see proof of Corollary~\ref{C:gb-c28}.
By~\cite[Prop.~2.8~(iii)]{rydh-noetherian}, it follows that $\cX$ is of
s-global type.
\end{proof}

Geraschenko and Satriano define generalized toric Artin stacks in terms of generalized stacky fans.
They establish that over an algebraically closed field of characteristic $0$, an Artin stack $\cX$ with finite quotient singularities is toric if and only if it has affine
diagonal, has an open dense torus $T$ acting on the stack, has linearly
reductive stabilizers, and $[\cX/T]$ is of global
type~\cite[Thm.~6.1]{GS-toric-stacks-2,GS-toric-stacks-2-erratum}. If $\cX$ has linearly reductive
stabilizers at closed points, then so has
$[\cX/T]$. Corollary~\ref{C:s-global-type} thus shows that the last condition
is superfluous.

\section{Global applications}\label{S:global_applications}
\subsection{Compact generation of derived categories} \label{A:compact-generation}
For results involving derived categories of quasi-coherent sheaves, perfect (or compact) generation of the unbounded derived category $\DQCOH(\cX)$ continues to be an indispensable tool at one's disposal \cite{neeman_duality,BZFN}.  We prove:  
\begin{theorem}\label{T:compact-generation}
  Let $\cX$ be an algebraic stack of finite type over a field $k$ (not assumed to be algebraically closed) with 
  affine diagonal. If the stabilizer group $G_x$ has linearly reductive
  identity component $G_x^0$ for every closed point of $\cX$, then $\cX$ has the Thomason condition; that is, 
  \begin{enumerate}
  \item $\DQCOH(\cX)$ is compactly generated by a countable set of
    perfect complexes; and
  \item for every open immersion $\cU\subseteq \cX$, there exists a compact and perfect complex $P \in \DQCOH(\cX)$ with support precisely $\cX\setminus \cU$. 
  \end{enumerate}
\end{theorem}

\begin{proof} This follows from
Corollary~\ref{C:s-global-type}
together with \cite[Thm.~B]{perfect_complexes_stacks} (characteristic $0$) and
Theorem~\ref{T:global-type} together with
\cite[Thm.~D]{hallj_dary_alg_groups_classifying} (positive characteristic).
\end{proof}

Theorem \ref{T:compact-generation} was previously only known for stacks with finite stabilizers~\cite[Thm.~A]{perfect_complexes_stacks} or quotients of quasi-projective schemes by a linear action of an affine algebraic group in characteristic $0$ \cite[Cor.~3.22]{BZFN}.

In positive characteristic, the theorem is almost sharp: if the reduced
identity component $(G_x)^0_\red$ is not linearly reductive, i.e., not a torus,
at some point $x$, then $\DQCOH(\cX)$ is not compactly
generated~\cite[Thm.~1.1]{hallj_neeman_dary_no_compacts}.

If $\cX$ is an algebraic stack of finite type over $k$ with affine stabilizers such that either
\begin{enumerate}
\item the characteristic of $k$ is $0$; or
\item \emph{every} stabilizer is linearly reductive;
\end{enumerate}
then $\cX$ is concentrated, that is, a complex of $\oh_{\cX}$-modules with quasi-coherent cohomology is perfect if and only if it is a compact object of $\DQCOH(\cX)$ \cite[Thm.~C]{hallj_dary_alg_groups_classifying}.
If $\cX$ admits a good moduli space $\pi\colon \cX\to X$ with affine diagonal,
then one of the two conditions hold by Theorem~\ref{T:consequences-gms}.
If $\cX$ does not admit a good moduli space and is of positive characteristic,
then it is not sufficient that closed points have linearly reductive
stabilizers as the following example shows.
\begin{example}
Let $\cX=[X/(\GG_m\times \ZZ/2\ZZ)]$ be the quotient of the non-separated affine
line $X$ by the natural $\GG_m$-action and the $\ZZ/2\ZZ$-action that swaps the
origins. Then $\cX$ has two points, one closed with stabilizer group $\GG_m$
and one open point with stabilizer group $\ZZ/2\ZZ$. Thus if $k$ has
characteristic two, then not every stabilizer group is linearly reductive and
there are non-compact perfect
complexes~\cite[Thm.~C]{hallj_dary_alg_groups_classifying}.
\end{example}

\subsection{Characterization of when $\cX$ admits a good moduli space} \label{A:gms}

Using the existence of completions, we can give an intrinsic characterization of those algebraic stacks that admit a good moduli space.

We will need one preliminary definition.  We say that a geometric point $y \co \Spec K \to \cX$ is {\it geometrically closed} if the image of $(y, \mathrm{id}) \co \Spec K \to \cX \otimes_k K$ is a closed point of $|\cX \otimes_k K|$.

\begin{theorem} \label{T:gms} Let $\cX$ be an algebraic stack with affine diagonal, locally of finite type over an algebraically closed field $k$. Then $\cX$ admits a good moduli space if and only if 
\begin{enumerate}
\item\label{TI:gms:unique-closed}
  For every point $y \in \cX(k)$, there exists a unique closed point in the closure $\overline{ \{ y \}}$.
\item\label{TI:gms:cc} 
 For every closed point $x \in \cX(k)$, the stabilizer group scheme $G_x$ is linearly reductive and the morphism $\hat{\cX}_x \to \cX$ from the coherent completion of $\cX$ at $x$ satisfies:
	\begin{enumerate}
	\item\label{TI:gms:stab-pres}
      The morphism $\hat{\cX}_x \to \cX$ is stabilizer preserving at every point; that is, $\hat{\cX}_x \to \cX$ induces an isomorphism of stabilizer groups for every point $\xi \in |\hat{\cX}_x|$.
	\item\label{TI:gms:geom-closed}
      The morphism $\hat{\cX}_x \to \cX$ maps geometrically closed points to geometrically closed points.
	\item\label{TI:gms:injective-on-k}
      The map $\hat{\cX}_x(k) \to \cX(k)$ is injective.
	\end{enumerate}
\end{enumerate}
\end{theorem}

\begin{remark}
The quotient $[\PP^1 / \GG_m]$ (where $\GG_m$ acts on $\PP^1$ via multiplication) does not satisfy \itemref{TI:gms:unique-closed}.
If $\cX=[X/(\ZZ/2\ZZ)]$ is the quotient of the non-separated affine line $X$ by the $\ZZ/2\ZZ$-action which swaps the origins (and acts trivially elsewhere), then the map $\Spec k\llbracket x \rrbracket = \hat{\cX}_0 \to \cX$ from the completion at the origin does not satisfy \itemref{TI:gms:stab-pres}.
If $\cX=[(\AA^2 \setminus 0) / \GG_m]$ where $\GG_m$-acts via $t \cdot (x,y) = (x,ty)$ and $p=(0,1) \in |\cX|$, then the map $\Spec k\llbracket x \rrbracket = \hat{\cX}_{p} \to \cX$  does not satisfy \itemref{TI:gms:geom-closed}.
If  $\cX=[C/\GG_m]$ where $C$ is the nodal cubic curve with a $\GG_m$-action and $p \in |\cX|$ denotes the image of the node, then $[\Spec(k[x,y]/xy) / \GG_m] = \hat{\cX}_{p} \to \cX$ 
does not satisfy \itemref{TI:gms:injective-on-k}. (Here $\GG_m$ acts on coordinate axes via $t \cdot (x,y) = (tx, t^{-1}y)$.)  These pathological examples in fact appear in many natural moduli stacks; see \cite[App.~A]{afsw-good}.
\end{remark}

\begin{remark}
Consider the non-separated affine line as a group scheme $G \to \AA^1$ whose
generic fiber is trivial but the fiber over the origin is $\ZZ/2\ZZ$ and let
$\cX=[\AA^1/G]$. In this case
\itemref{TI:gms:stab-pres} is not satisfied. Nevertheless, the stack
$\cX$ does have a good moduli space $X=\AA^1$ but $\cX\to X$ has
non-separated diagonal.
\end{remark}

\begin{remark}
When $\cX$ has finite stabilizers, then
conditions~\itemref{TI:gms:unique-closed}, \itemref{TI:gms:geom-closed} and
\itemref{TI:gms:injective-on-k} are always
satisfied. Condition~\itemref{TI:gms:stab-pres} is satisfied if and only if the
inertia stack is finite over $\cX$. In this case, the good moduli space of $\cX$
coincides with the coarse space of $\cX$, which exists by~\cite{keel-mori}.
\end{remark}

\begin{proof}[Proof of Theorem \ref{T:gms}]
For the necessity of the conditions, properties of good moduli spaces imply \itemref{TI:gms:unique-closed}  \cite[Thm.\ 4.16(ix)]{alper-good} and that $G_x$ is linearly reductive for a closed point $x \in |\cX|$ \cite[Prop.\ 12.14]{alper-good}.  The rest of \itemref{TI:gms:cc} follows from the explicit description of the coherent completion in Theorem~\ref{T:complete}~\itemref{T:complete:gms}.

For the sufficiency, we claim it is enough to verify:
\begin{enumerate}
\item[(I)] For every closed point $x \in |\cX|$, there exists an affine \'etale morphism
$$f \co (\cX_1, w) \to (\cX, x),$$ such that $\cX_1 = [\Spec A / G_x]$ and for each closed point $w' \in \cX_1$,
\begin{enumerate}
\item $f$ induces an isomorphism of stabilizer groups at $w'$; and
\item  $f(w')$ is closed.
\end{enumerate}
\item[(II)] For every $y \in \cX(k)$, the closed substack $\overline{ \{y\}}$ admits a good moduli space.
\end{enumerate}
This is proven in \cite[Thm.~1.2]{afsw-good} but we provide a quick proof here for the convenience of the reader.  For a closed point $x \in |\cX|$, consider the \v{C}ech nerve of an affine \'etale morphism $f$ satisfying (I)
\[
\xymatrix{ \cdots    \cX_3 \ar@<-1ex>[r] \ar[r] \ar@<1ex>[r]  &  \cX_2 \ar@<.5ex>[r]^{\smash{p_1}} \ar@<-.5ex>[r]_{p_2} & \cX_1 \ar[r]^-{\smash{f}} & \cX_0 = \im(f).
}
\]
Since $f$ is affine, there are good moduli spaces $\cX_i \to X_i$ for each $i \ge 1$, and morphisms $\xymatrix{\cdots  X_3  \ar@<1ex>[r] \ar@<-1ex>[r] \ar[r]  & X_2 \ar@<.5ex>[r] \ar@<-.5ex>[r]  & X_1.}$

We claim that both projections $p_1, p_2 \co \cX_2 \to \cX_1$ send closed points to closed points.  If $x_2 \in |\cX_2|$ is a closed point, to check that $p_j(x_2) \in |\cX_1|$ is also closed for either $j=1$ or $j=2$,  we may replace the $\cX_i$ with the base changes along $\overline{ \{f(p_j(x_2))\}}\inj \cX_0$.  In this case, there is a good moduli space $\cX_0\to X_0$ by (II) and $\cX_1\to \cX_0$ sends closed points to closed points and are stabilizer preserving at closed points by (I).
Luna's fundamental lemma (Proposition \ref{P:luna}) then implies that $\cX_1 \cong \cX_0 \times_{X_0} X_1$ and the claim follows.

Luna's fundamental lemma (Proposition \ref{P:luna}) now applies to $p_j\co \cX_2\to \cX_1$ and says that $X_2 \to X_1$ is \'etale and that $p_j$ is the base change of this map along $\cX_1 \to X_1$.  The analogous fact holds for the maps $X_3 \to X_2$.  The universality of good moduli spaces induces an \'etale groupoid structure $X_2 \rightrightarrows X_1$.  To check that this is an \'etale equivalence relation, it suffices to check that $X_2 \to X_1 \times X_1$ is injective on $k$-points but this follows from the observation the $|\cX_2| \to |\cX_1| \times |\cX_1|$ is injective on closed points.  It follows that there is an algebraic space quotient $X_0 := X_1/X_2$ and a commutative cube 
\[
\xymatrix@=15pt{
				&\cX_2 \ar[rr] \ar[dd] \ar[dl]			&				& \cX_1 \ar[dd] \ar[dl] \\
\cX_1 \ar[rr] \ar[dd] &						& \cX_0 \ar[dd]	& \\
				& X_2 \ar[rr] \ar[dl]				& 				& X_1 \ar[dl]  \\
X_1  \ar[rr]	&							& X_0. 		&
}
\]
Since the top, left, and bottom faces are cartesian, it follows from \'etale descent that the right face is also cartesian and that $\cX_0 \to X_0$ is a good moduli space.  Moreover, each closed point of $\cX_0$ remains closed in $\cX$ by (Ib).  Therefore, if we apply the construction above to find such an open neighborhood $\cX_x$ around each closed point $x$, the good moduli spaces of $\cX_x$ glue to form a good moduli space of~$\cX$.

We now verify condition (I).  Let $x \in \cX(k)$ be a closed point.  By Theorem \ref{T:field}, there
exist a quotient stack $\cW = [\Spec A / G_x]$ with a closed point $w \in |\cW|$ and an affine \'etale morphism $f \co (\cW, w) \to (\cX, x)$ such that $f$ is stabilizer preserving at $w$.
 As the coherent completion of $\cW$ at $w$ is identified with $\hat{\cX}_x$, we have a 2-commutative diagram
\begin{equation} \label{E:completion}
\begin{split}
\xymatrix{
\hat{\cX}_x \ar[d] \ar[rd]	&  \\
\cW \ar[r]^(.4){f}		& \cX.
} \end{split}
\end{equation}
The subset $Q_a\subseteq |\cW|$ consisting of points $\xi \in |\cW|$ such that
$f$ is stabilizer preserving at $\xi$ is constructible.
Since $Q_a$ contains every point in the image of $\hat{\cX}_x \to \cW$ by
hypothesis \itemref{TI:gms:stab-pres}, it follows that $Q_a$ contains a
neighborhood of $w$. Thus after replacing $\cW$ with an open saturated
neighborhood containing $w$ (Lemma~\ref{L:shrink}), we may assume that $f \co
\cW \to \cX$ satisfies condition (Ia).

Let $\pi\co \cW\to W$ be the good moduli space of $\cW$ and consider the
morphism $g=(f,\pi)=\cW\to \cX\times W$. For a point $\xi\in |W|$, let
$\xi^0\in |\cW|$ denote the unique point that is closed in the fiber $\cW_\xi$.
Let $Q_b\subseteq |W|$ be the locus of points $\xi\in |W|$ such that $g(\xi^0)$
is closed in $|(\cX\times W)_\xi|=|\cX_{\kappa(\xi)}|$. This locus is
constructible. Indeed, the subset $\cW^0=\{\xi^0\;:\;\xi\in |W|\}\subseteq
|\cW|$ is easily seen to be constructible; hence so is $g(\cW^0)$ by
Chevalley's theorem. The locus $Q_b$ equals the set of points $\xi\in |W|$ such
that $g(\cW^0)_\xi$ is closed which is constructible by~\cite[IV.9.5.4]{EGA}. The
locus $Q_b$ contains $\Spec \oh_{W,\pi(w)}$ by hypothesis
\itemref{TI:gms:geom-closed} (recall that $\hat{\cX}_x=\cW\times_W \Spec
\hat{\oh}_{W,\pi(w)}$). Therefore, after replacing $\cW$ with an open saturated
neighborhood of $w$, we may assume that $f \co \cW \to \cX$ satisfies condition
(Ib).

For condition (II), we may replace $\cX$ by $\overline{ \{y\} }$. By \itemref{TI:gms:unique-closed}, there is a unique closed point $x \in \overline{ \{y\} }$ and we can find a commutative diagram as in \eqref{E:completion} for $x$.  By \itemref{TI:gms:geom-closed} we can, since $f$ is \'etale, also assume that $\cW$ has a unique closed point. Now let $B=\Gamma(\cW,\oh_{\cW})$; then $B$ is a domain of finite type over $k$ \cite[Thm.~4.16(viii),(xi)]{alper-good}; in particular, $B$ is a Jacobson domain. Since $\cW$ has a unique closed point, so too does $\Spec B$ \cite[Thm.~4.16(iii)]{alper-good}. Hence, $B$ is also local and so $\Gamma(\cW, \oh_{\cW}) = k$. By Theorem \ref{T:complete}\itemref{T:complete:etale-pres}, $\hat{\cX}_x = \cW$.  By hypothesis \itemref{TI:gms:injective-on-k}, $f \co \cW \to \cX$ is an \'etale monomorphism which is also surjective by hypothesis \itemref{TI:gms:unique-closed}.  We conclude that $f \co \cW \to \cX$ is an isomorphism establishing condition (II).
\end{proof}

\subsection{Algebraicity results} \label{A:algebraicity}

In this subsection, we fix a field $k$ (not necessarily algebraically closed), an algebraic space $X$ locally of finite type over $k$, and an algebraic stack $\cX$ of finite type over $X$ with affine diagonal over $X$ such that $\cX \to X$ is a good moduli space. We prove the following algebraicity results.

\begin{theorem}[Stacks of coherent sheaves]\label{T:coh}
  The $X$-stack $\underline{\Coh}_{\cX/X}$, whose objects over $T \to X$ are finitely presented quasi-coherent sheaves on $\cX \times_X T$ flat over $T$, is an algebraic stack, locally of finite type over $X$, with affine diagonal over $X$. 
\end{theorem}

\begin{corollary}[Quot schemes]\label{C:quot}
Let $\cF$ be a quasi-coherent $\oh_{\cX}$-module. The $X$-sheaf $\underline{\Quot}_{\cX/X}(\cF)$, whose objects over $T \to X$ are quotients $p_1^* \cF \to \cG$, where $p_1 \co \cX \times_X T \to \cX$ is the projection and $\cG$ is a finitely presented quasi-coherent $\oh_{\cX \times_X T}$-module that is flat over $T$, is a separated algebraic space over $X$. In addition, if $\cF$ is coherent, then $\underline{\Quot}_{\cX/X}(\cF)$ is locally of finite type over $X$.
\end{corollary}
\begin{corollary}[Hilbert schemes]\label{C:hilb}
 The $X$-sheaf $\underline{\mathrm{Hilb}}_{\cX/X}$, whose objects over $T \to X$ are closed substacks $\cZ \subseteq \cX \times_X T$ such that $\cZ$ is flat and locally of finite presentation over $T$, is a separated algebraic space locally of finite type over $X$.
\end{corollary}

\begin{theorem}[Hom stacks] \label{T:hom}
Let $\cY$ be a quasi-separated algebraic stack, locally of finite type over $X$ with affine stabilizers.
If $\cX \to X$ is flat, then the $X$-stack $\underline{\Hom}_X(\cX, \cY)$, whose objects are pairs consisting of a morphism $T \to X$ of algebraic spaces and a morphism $\cX \times_X T \to \cY$ of algebraic stacks over $X$, is an algebraic stack, locally of finite type over $X$, with quasi-separated diagonal.  If $\cY \to X$ has affine (resp.\ quasi-affine, resp.\ separated) diagonal, then the same is true for $\underline{\Hom}_X(\cX, \cY) \to X$.
\end{theorem}
A general algebraicity theorem for Hom stacks was also considered in \cite{hlp}.  In the setting of Theorem \ref{T:hom}---without the assumption that $\cX \to X$ is a good moduli space---\cite[Thm.~1.6]{hlp} establishes the algebraicity of $\underline{\Hom}_X(\cX, \cY)$ under the additional hypotheses that $\cY$ has affine diagonal and that $\cX \to X$ is cohomologically projective \cite[Def.~1.12]{hlp}.  We note that as a consequence of Theorem \ref{T:consequences-gms}, if $\cX \to X$ is a good moduli space, then $\cX \to X$ is necessarily \'etale-locally cohomologically projective.

We also prove the following, which we have not seen in the literature before.
\begin{corollary}[$G$-equivariant Hom stacks] \label{C:homG}
Let $Z$ and $S$ be quasi-separated algebraic spaces, locally of finite type over $k$. Let $\cX$ be a quasi-separated Deligne--Mumford stack, locally of finite type over $k$. 
Let $G$ be a linearly reductive affine group scheme acting on $Z$ and $\cX$.
Let $Z \to S$ and $\cX \to S$ be
$G$-invariant morphisms.  Suppose that $Z \to S$ is flat and a good GIT quotient.  Then the $S$-groupoid
$\underline{\Hom}_S^{G}(Z, \cX)$, whose objects over $T \to S$ are $G$-equivariant $S$-morphisms $Z \times_S T \to \cX$, 
is a Deligne--Mumford stack, locally of finite type over $S$. In addition, if $\cX$ is an algebraic space, then so is $\underline{\Hom}_S^{G}(Z, \cX)$, and
if $\cX$ has quasi-compact and separated diagonal, then so has $\underline{\Hom}_S^{G}(Z, \cX)$.
\end{corollary}
The results of this section will largely be established using Artin's criterion, as formulated in \cite[Thm.~A]{hallj_openness_coh}. This uses the notion of coherence, in the sense of Auslander \cite{auslander}, which we now briefly recall.

Let $A$ be a ring. An additive functor $F \colon \Mod(A) \to \Ab$ is \emph{coherent} if there is a morphism of $A$-modules $\varphi\colon M \to N$ and a functorial isomorphism:
\[
  F(-) \simeq \coker(\Hom_{A}(N,-) \xrightarrow{\varphi^*} \Hom_{A}(M,-)). 
\]
Coherent functors are a remarkable collection of functors, and form an
abelian subcategory of the category of additive functors
$\Mod(A) \to \Ab$ that is closed under limits \cite{auslander}. It is obvious that coherent functors preserve small (i.e., infinite) products and work
of Krause \cite[Prop.~3.2]{MR2026723} implies that this essentially characterizes them. They frequently arise in algebraic geometry as cohomology and
$\Ext$-functors \cite{hartshorne-coherent}. Fundamental results, such
as cohomology and base change, are very simple consequences of the
coherence of cohomology functors \cite{hallj_coho_bc}.
\begin{example}\label{E:coherent-functor1}
  Let $A$ be a ring and $M$ an $A$-module. Then the functor
  $I \mapsto \Hom_A(M,I)$ is coherent. Taking $M=A$, we see that the
  functor $I \mapsto I$ is coherent.
\end{example}
\begin{example}\label{E:coherent-functor2}
  Let $R$ be a noetherian ring and $Q$ a finitely generated
  $R$-module. Then the functor
  $I \mapsto {Q}\otimes_{R} I$ is
  coherent. Indeed, there is a presentation
  $A^{\oplus r} \to A^{\oplus s} \to {Q} \to 0$ and an
  induced functorial isomorphism
  \[
    {Q}\otimes_{A} I = \coker(I^{\oplus r} \to I^{\oplus s}).
  \]
  Since the category of coherent functors is abelian, the claim
  follows from Example \ref{E:coherent-functor1}. More generally, if
  $Q^\bullet$ is a bounded above complex of coherent
  $R$-modules, then for each $j\in \ZZ$ the functor:
  \[
    I \mapsto H^j({Q}^\bullet\otimes^{\LDERF}_{R} I)
  \]
  is coherent \cite[Ex.~2.3]{hartshorne-coherent}. Such a coherent
  functor is said to \emph{arise from a complex}. An additive functor
  $F \colon \Mod(A) \to \Ab$ is \emph{half-exact} if for every short
  exact sequence $0 \to I'' \to I \to I' \to 0$ the sequence
  $F(I'') \to F(I) \to F(I')$ is exact. Functors arising from
  complexes are half-exact. Moreover, if $F \colon \Mod(A) \to \Ab$ is
  coherent, half-exact and preserves direct limits of $A$-modules,
  then it is the direct summand of a functor that arises from a complex
  \cite[Prop.~4.6]{hartshorne-coherent}.
\end{example}

The following proposition is a variant of \cite[Thm.~C]{hallj_coho_bc} and \cite[Thm.~D]{perfect_complexes_stacks}.
\begin{proposition}\label{P:coh_gms}
  Let $\cplx{F} \in \DQCOH(\cX)$ and $\cplx{G} \in \DCAT_{\Coh}^b(\cX)$. If $X=\Spec R$ is affine, then the 
  functor
  \[
    H_{\cplx{F},\cplx{G}}(-):=\Hom_{\oh_{\cX}}\bigl(\cplx{F},\cplx{G} \tensor_{\oh_\cX}^{\LDERF} \QCPBK{\pi}(-)\bigr) \colon \Mod(R) \to \Ab
  \]
  is coherent.
\end{proposition}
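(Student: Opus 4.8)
The plan is to reduce the assertion to the coherence machinery of \cite{hallj_coho_bc} and \cite{perfect_complexes_stacks}, the only genuinely new ingredient being that a finite-type good moduli space morphism satisfies the cohomological finiteness and base-change hypotheses needed there. Since $S=\Spec A$ is affine of finite type over $k$, the ring $A$ is noetherian and $\QCoh(S)=\mathrm{Mod}(A)$; as $\oh_S\cong\pi_*\oh_{\cW}$, every functor in sight is $A$-linear. Writing $T^i(M)=\Ext^i_{\oh_{\cW}}(\cplx F,\cplx G\tensor^{\LDERF}_{\oh_{\cW}}\QCPBK{\pi}M)$, it suffices to produce a single complex $\cplx P\in\DQCOH(S)=\DCAT(A)$ together with natural isomorphisms $T^i(M)\cong H^i(\cplx P\tensor^{\LDERF}_A M)$; granting these, coherence of $T=T^0$ is the standard fact (Auslander \cite{auslander}) that the cohomology of a fixed complex tensored with the variable $M$ is a coherent functor of $M$. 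So the whole content lies in constructing $\cplx P$ and verifying these base-change isomorphisms.

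First I would record the properties of $\pi\co\cW\to S$ that make this possible. Because $\pi$ is cohomologically affine with affine diagonal while $S$ has affine (hence quasi-affine) diagonal, $\QCPSH{\pi}$ is $t$-exact on $\DQCOH^+$, so in particular it preserves bounded complexes. Since $\pi$ is of finite type and $S$ is noetherian, $\QCPSH{\pi}=\pi_*$ carries coherent sheaves to coherent sheaves \cite{alper-good}; combined with $t$-exactness this shows $\QCPSH{\pi}$ restricts to a functor $\DCAT^b_{\Coh}(\cW)\to\DCAT^b_{\Coh}(S)$. Finally, good moduli space morphisms are stable under arbitrary base change and $\QCPSH{\pi}$ commutes with flat base change on $S$ \cite{alper-good}. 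These are exactly the cohomological properness and base-change properties under which the cited coherence theorems are established.

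Next I would reduce to the case where $\cplx G=\cG$ is a single coherent sheaf. A distinguished triangle $\cplx G'\to\cplx G\to\cplx G''\to$ induces, for fixed $\cplx F$, a long exact sequence relating the functors $T^\bullet$ attached to $\cplx G'$, $\cplx G$ and $\cplx G''$; since coherent functors form an abelian category closed under kernels, cokernels and extensions, coherence for the cohomology sheaves of $\cplx G$ in every degree yields coherence for $\cplx G$. Inducting on the amplitude of $\cplx G$, I may thus assume $\cplx G=\cG$. For such $\cG$ I set $\cplx P:=\QCPSH{\pi}\RDERF\shv{H}om_{\oh_{\cW}}(\cplx F,\cG)\in\DCAT(A)$, and the projection formula for the concentrated morphism $\pi$ supplies a natural map $\cplx P\tensor^{\LDERF}_A M\to\QCPSH{\pi}\RDERF\shv{H}om_{\oh_{\cW}}(\cplx F,\cG\tensor^{\LDERF}_{\oh_{\cW}}\QCPBK{\pi}M)$ whose cohomology computes the $T^i(M)$ and which is readily seen to be an isomorphism when $M$ is flat.

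The main obstacle is upgrading this isomorphism from flat $M$ to arbitrary $M$: for general $M$ the identity $T^i(M)\cong H^i(\cplx P\tensor^{\LDERF}_A M)$ is precisely a cohomology-and-base-change assertion, and it is delicate here because $\pi$ is neither representable nor proper and $\cplx F$ is an arbitrary object of $\DQCOH(\cW)$. This is where I would invoke the machinery of \cite{hallj_coho_bc,perfect_complexes_stacks}: the boundedness and coherence of $\QCPSH{\pi}$ established above bound the amplitude of the relevant complexes and control the convergence of the base-change spectral sequence, forcing the exchange isomorphism. To accommodate the arbitrary $\cplx F$, I would use that $\cW$ has linearly reductive stabilizers at its closed points (Theorem \ref{T:consequences-gms}), so that $\DQCOH(\cW)$ is compactly generated by perfect complexes (Theorem \ref{T:compact-generation}); writing $\cplx F$ as a homotopy colimit of perfect complexes reduces the exchange isomorphism to the perfect case, where the projection formula applies directly, and the finiteness of $\QCPSH{\pi}$ is what tames the resulting inverse system and closes the argument.
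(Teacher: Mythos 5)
Your proposal is correct and follows essentially the same route as the paper: the paper's proof simply verifies that $\DQCOH(\stW)$ is compactly generated (Theorem \ref{T:compact-generation}) and that $\QCPSH{\pi}$ carries $\DCAT_{\Coh}^{+}(\stW)$ into $\DCAT_{\Coh}^{+}(S)$ (via cohomological affineness and \cite[Thm.~4.16(x)]{alper-good}), and then cites \cite[Cor.~4.19]{perfect_complexes_stacks} --- exactly the two hypotheses you isolate before deferring to the same machinery. One caution about your supplementary sketch of that machinery's internals: for unbounded $\cplx{F}$ the functor is \emph{not} in general computed by a single complex $\cplx{P}\tensor^{\LDERF}_A(-)$ with $\cplx{P}$ pseudo-coherent (that framing only works for perfect $\cplx{F}$, where the derived sheaf-Hom $\RDERF\shv{H}om_{\oh_{\stW}}(\cplx{F},\cplx{G})=\cplx{F}^{\vee}\tensor\cplx{G}$ lies in $\DCAT_{\Coh}^{b}(\stW)$ and $\QCPSH{\pi}$ of it is coherent and bounded), so the general case genuinely requires expressing the functor as a limit over a compact-generation presentation of $\cplx{F}$ and using closure properties of coherent functors in Auslander's sense, which is precisely the content of the cited corollary rather than a single cohomology-and-base-change isomorphism.
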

\begin{proof}
  This follows immediately from Theorem \ref{T:compact-generation},
  \cite[Thm.~4.16(x)]{alper-good} and
  \cite[Cor.~4.19]{perfect_complexes_stacks}. The argument is
  reasonably straightforward, however, so we sketch it here.  To this end if 
  $\cplx{F} = \oh_{\cX}$, then the projection formula
  \cite[Prop.~4.11]{perfect_complexes_stacks} implies that
  \begin{align*}
    \Hom_{\oh_{\cX}}\bigl(\oh_{\cX},\cplx{G} \tensor_{\oh_\cX}^{\LDERF} \QCPBK{\pi}(-)\bigr) &= H^0\bigl(\RDERF \Gamma(\cX,\cplx{G} \tensor_{\oh_\cX}^{\LDERF} \QCPBK{\pi}(-))\bigr)\\
    &\simeq H^0\bigl(\RDERF \Gamma(\cX,\cplx{G}) \tensor_{R}^{\LDERF} - \bigr).
  \end{align*}
  Since $\cplx{G} \in \DCAT_{\Coh}^b(\cX)$ and $\cX \to X$ is a good
  moduli space, it follows that
  $\RDERF \Gamma(\cX,\cplx{G}) \in \DCAT^b_{\Coh}(R)$. Indeed
  $H^j(\RDERF \Gamma(\cX,\cplx{G})) \simeq
  \Gamma(\cX,\cH^j(\cplx{G}))$, because $\RDERF\Gamma(\cX,-)$ is $t$-exact on
  quasi-coherent sheaves (see \S\ref{S:notation}); now apply \cite[Thm.~4.16(x)]{alper-good}. By Example \ref{E:coherent-functor2}, the
  functor $H_{\oh_{\cX},\cplx{G}}$ is coherent. If $\cplx{F}$ is
  perfect, then
  $H_{\cplx{F},\cplx{G}} \simeq H_{\oh_{\cX},{\cplx{F}}^\vee
    \tensor^{\LDERF}_{\oh_{\cX}} \cplx{G}}$ is also coherent by the
  case just established. Now let $\mathcal{T} \subseteq \DQCOH(\cX)$
  be the full subcategory consisting of those $\cplx{F}$ such that
  $H_{\cplx{F},\cplx{G}}$ is coherent for every
  $\cplx{G} \in \DCAT^b_{\Coh}(\cX)$. Certainly, $\mathcal{T}$ is a
  triangulated subcategory that contains the perfect complexes. If
  $\{\cplx{F}_\lambda\}_{\lambda\in \Lambda} \subseteq \DQCOH(\cX)$,
  then
  $\prod_{\lambda\in \Lambda} H_{\cplx{F}_\lambda,\cplx{G}} \simeq
  H_{\oplus_\lambda \cplx{F}_\lambda,\cplx{G}}$. Since small products
  of coherent functors are coherent \cite[Ex.~4.9]{hallj_coho_bc}, it
  follows that $\mathcal{T}$ is closed under small coproducts. By
  Thomason's Theorem (e.g.,
  \cite[Cor.~3.14]{perfect_complexes_stacks}),
  $\mathcal{T} = \DQCOH(\cX)$. The result follows.
\end{proof}
The following corollary is a variant of \cite[Thm.~D]{hallj_coho_bc}.
\begin{corollary}\label{C:aff_hom_fund}
  Let $\cF$ be a quasi-coherent $\oh_{\cX}$-module and let $\cG$ be a coherent $\oh_{\cX}$-module. If $\cG$ is flat over $X$, then the $X$-presheaf $\underline{\Hom}_{\oh_{\cX}/X}(\cF,\cG)$ whose objects over $T \xrightarrow{\tau} X$ are homomorphisms $\tau_{\cX}^*\cF \to \tau_{\cX}^*\cG$ of $\oh_{\cX\times_X T}$-modules (where $\tau_{\cX} \co \cX \times_X T \to \cX$ is the projection) 
  is representable by an affine $X$-scheme. 
\end{corollary}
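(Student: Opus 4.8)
The plan is to reduce to the case that $S$ is affine and then to identify the presheaf, on affine test objects, with the functor of global sections of the coherent functor furnished by Proposition \ref{P:coh_gms}. Since representability by an affine scheme is Zariski-local on the base, and since $\underline{\Hom}_{\oh_{\cW}/S}(\cF,\cG)$ is compatible with base change on $S$ (its restriction over $S_i\subseteq S$ is $\underline{\Hom}_{\oh_{\cW_{S_i}}/S_i}(\cF|,\cG|)$), a representing affine scheme over each member of an affine cover of $S$ glues, by uniqueness of the representing object, to an affine $S$-scheme. I may therefore assume $S=\Spec R$. Set $H(\cM):=\Hom_{\oh_{\cW}}\bigl(\cF,\cG\tensor^{\LDERF}_{\oh_{\cW}}\QCPBK{\pi}\cM\bigr)$, the functor $\QCoh(S)\to\QCoh(S)$ of Proposition \ref{P:coh_gms} applied to $\cplx{F}=\cF$ and $\cplx{G}=\cG$; this is legitimate since $\cG$, being coherent and flat over $S$, lies in $\DCAT_{\Coh}^b(\cW)$, and the proposition then tells us that $H$ is coherent.

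First I would identify the presheaf on affine $S$-schemes with $H$. Let $S'=\Spec_S\cA\to S$ be affine, where $\cA=\tau_*\oh_{S'}$ is a quasi-coherent $\oh_S$-algebra, and write $\tau_{\cW}\colon\cW\times_S S'\to\cW$ for the projection. By the adjunction between $\tau_{\cW}^{*}$ and $\tau_{\cW*}$ together with the projection formula for the affine morphism $\tau_{\cW}$, one has $\tau_{\cW*}\tau_{\cW}^{*}\cG=\cG\tensor_{\oh_S}\cA$, and hence $\Hom_{\oh_{\cW\times_S S'}}(\tau_{\cW}^{*}\cF,\tau_{\cW}^{*}\cG)=\Hom_{\oh_{\cW}}(\cF,\cG\tensor_{\oh_S}\cA)$. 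The hypothesis that $\cG$ is flat over $S$ is precisely what forces $\cG\tensor_{\oh_S}\cA=\cG\tensor^{\LDERF}_{\oh_{\cW}}\QCPBK{\pi}\cA$, so that the right-hand side equals $\Gamma\bigl(S,H(\cA)\bigr)$. This yields a natural isomorphism $\underline{\Hom}_{\oh_{\cW}/S}(\cF,\cG)(S')\cong\Gamma\bigl(S,H(\cA)\bigr)$, valid for \emph{every} affine $S'\to S$ and not merely the flat ones.

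The remaining point is that $H$ is representable by a coherent sheaf. The same flatness identity shows that $\cM\mapsto\cG\tensor^{\LDERF}_{\oh_{\cW}}\QCPBK{\pi}\cM$ is an exact functor concentrated in degree $0$ (base change, together with flatness of $\cG$ over $S$); composing with the left-exact functor $\Hom_{\oh_{\cW}}(\cF,-)$ shows that $H$ is left exact. By Auslander's theory of coherent functors \cite{auslander}, a left-exact coherent functor $\QCoh(S)\to\QCoh(S)$ is representable, so there is a coherent $\oh_S$-module $\mathcal{Q}$ with $H\cong\mathcal{H}om_{\oh_S}(\mathcal{Q},-)$. Feeding this into the identification above and using the adjunction $\Hom_{\oh_S}(\mathcal{Q},\cA)=\Hom_{\oh_S\text{-alg}}(\Sym_{\oh_S}\mathcal{Q},\cA)$ gives $\underline{\Hom}_{\oh_{\cW}/S}(\cF,\cG)(S')=\Hom_S\bigl(S',\Spec_S\Sym_{\oh_S}\mathcal{Q}\bigr)$ for all affine $S'\to S$. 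As both $\underline{\Hom}_{\oh_{\cW}/S}(\cF,\cG)$ and $\Spec_S\Sym_{\oh_S}\mathcal{Q}$ are Zariski sheaves on $\Sch/S$ agreeing on the affine objects that form a basis, they agree as functors, exhibiting the presheaf as the affine $S$-scheme $\Spec_S\Sym_{\oh_S}\mathcal{Q}$.

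The substantive input is the coherence of $H$ (Proposition \ref{P:coh_gms}), which rests on compact generation of $\DQCOH(\cW)$ and is already in hand; granting it, the delicate points are the base-change/projection-formula identification—where flatness of $\cG$ over $S$ is essential, being exactly what lets the comparison go through for arbitrary (not necessarily flat) affine test schemes—and the invocation of representability of left-exact coherent functors. I expect the latter to be the conceptual crux: it is what upgrades the formal coherence of $H$ to an actual coherent sheaf $\mathcal{Q}$, and thence to an affine scheme, rather than leaving one only with a cokernel presentation by representable functors.
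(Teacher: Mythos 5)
Your proof is correct and follows essentially the same route as the paper, which simply instructs the reader to ``argue exactly as in the proof of \cite[Thm.~D]{hallj_coho_bc}'' with Proposition \ref{P:coh_gms} supplying the coherence input; that cited argument is precisely your chain (reduce to affine $S$, identify the presheaf with $\cM\mapsto\Hom_{\oh_{\cW}}(\cF,\cG\tensor_{\oh_S}\cM)$ using flatness of $\cG$, invoke representability of left-exact coherent functors to get a coherent $\oh_S$-module $\mathcal{Q}$, and take $\Spec_S\Sym_{\oh_S}\mathcal{Q}$). The only cosmetic point is that since $S$ is a priori an algebraic space you should glue étale-locally rather than Zariski-locally, which is harmless because affine morphisms satisfy effective étale descent.
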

\begin{proof}
  We argue exactly as in the proof of \cite[Thm.~D]{hallj_coho_bc}, but
  using Proposition \ref{P:coh_gms} in place of
  \cite[Thm.~C]{hallj_coho_bc}. Again, the argument is quite short, so
  we sketch it here for completeness. First, we may obviously reduce
  to the situation where $X=\Spec R$. Next, since $\shv{G}$ is flat
  over $X$, it follows that
  $\shv{G} \tensor^{\LDERF}_{\oh_{\cX}} \LDERF \pi^*_{\QCoh}(-)
  \simeq \shv{G} \tensor_{\oh_{\cX}} \pi^*(-)\simeq \shv{G}
  \tensor_R (-)$. The functor
  $H(-)= \Hom_{\oh_{\cX}}(\shv{F},\shv{G} \tensor_{\oh_{\cX}}
  \pi^*(-))$ is also coherent and left-exact (Proposition \ref{P:coh_gms}). But coherent functors
  preserve small products \cite[Ex.~4.8]{hallj_coho_bc}, so the
  functor above preserves all limits. It follows from the
  Eilenberg--Watts Theorem \cite[Thm.~6]{MR0118757} (also see
  \cite[Ex.~4.10]{hallj_coho_bc} for further discussion) that there is
  an $R$-module $Q$ and an isomorphism of functors
  $H(-) \simeq \Hom_{R}(Q,-)$. Finally, consider an $R$-algebra $C$
  and let $T=\Spec C$; then there are functorial isomorphisms:
  \begin{align*}
    \underline{\Hom}_{\oh_{\cX}/X}(\shv{F},\shv{G})(T \to X) &= \Hom_{\oh_{\cX\times_X T}}(\tau_{\cX}^*\shv{F},\tau_{\cX}^*\shv{G})\\
                                                               &\simeq \Hom_{\oh_{\cX}}(\shv{F},(\tau_{\cX})_*\tau_{\cX}^*\shv{G})\\
                                                               &\simeq \Hom_{\oh_{\cX}}(\shv{F},\shv{G} \otimes_R C)\\
                                                               &\simeq \Hom_{R}(Q,C)\\
                                                               &\simeq \Hom_{R\mathrm{-Alg}}(\Sym^{\bullet}_RQ,C)\\
                                                               &\simeq \Hom_X(T,\Spec (\Sym^{\bullet}_RQ)).
  \end{align*}
  Hence, 
  $\underline{\Hom}_{\oh_{\cX}/X}(\cF,\cG)$ is represented by the
  affine $X$-scheme $\Spec (\Sym_R^{\bullet} Q)$.
\end{proof}

\begin{proof} [Proof of Theorem \ref{T:coh}]
  The argument is very similar to the proof of
  \cite[Thm.~8.1]{hallj_openness_coh}. We may assume that $X=\Spec R$
  is affine. If $C$ is an $R$-algebra, it will be convenient to let
  $\cX_C = \cX\times_X \Spec C$. We also let
  $\Coh^{\mathrm{flb}}(\cX_C)=\underline{\Coh}_{\cX/X}(\Spec C)$;
  that is, it denotes the full subcategory of $\Coh(\cX_C)$ with
  objects those coherent sheaves that are $C$-flat.

  Note that $R$ is of finite type over a field $k$, so $X$ is an
  excellent scheme.  We may now use the criterion of
  \cite[Thm.~A]{hallj_openness_coh}. There are six conditions to
  check.
  \begin{enumerate}
  \item {[Stack]} $\underline{\Coh}_{\cX/X}$ is a stack for the \'etale
    topology. This is immediate from \'etale descent of quasi-coherent
    sheaves.
  \item {[Limit preservation]} If $\{A_j\}_{j\in J}$ is a direct system
    of $R$-algebras with limit $A$, then the natural functor
    $\varinjlim_j \Coh^{\mathrm{flb}}(\cX_{A_j}) \to
    \Coh^{\mathrm{flb}}(\cX_A)$ is an equivalence of
    categories. This is immediate from standard limit results \cite[\S\S IV.8, IV.11]{EGA}. 
  \item {[Homogeneity]} Given a diagram of $R$-algebras
    $[B \to A \leftarrow A']$, where $A' \to A$ is surjective with
    nilpotent kernel, then the natural functor:
    \[
      \Coh^{\mathrm{flb}}(\cX_{B \times_A A'}) \to {\Coh}^{\mathrm{flb}}(\cX_B) \times_{{\Coh}^{\mathrm{flb}}(\cX_A)}\Coh^{\mathrm{flb}}(\cX_{A'})
    \]
    induces an equivalence of categories. This is just a strong version of Schlessinger's
    conditions, which is proved in \cite[Lem.~8.3]{hallj_openness_coh}
    (see \cite[p.~166]{hallj_openness_coh} for further discussion).
  \item {[Effectivity]} If $(A,\mathfrak{m})$ is an
    $\mathfrak{m}$-adically complete noetherian local ring, then the natural functor:
    \[
      \Coh^{\mathrm{flb}}(\cX_A) \to \varprojlim_n \Coh^{\mathrm{flb}}(\cX_{A/\mathfrak{m}^{n+1}})
    \]
    is an equivalence of categories. This is immediate from Corollary \ref{C:gb-c28}\itemref{C:gb-c28:fGAGA} and the local criterion of flatness. 
  \item {[Conditions on automorphisms and deformations]} If $A$ is a
    finite type $R$-algebra and
    $\shv{F} \in \Coh^{\mathrm{flb}}(\cX_A)$, then the infinitesimal
    automorphism and deformation functors associated to $\shv{F}$ are
    coherent. It is established in
    \cite[\S8]{hallj_openness_coh} that as additive functors from $\Mod(A) \to \Ab$:
    \begin{align*}
      \Aut_{\underline{\Coh}_{\cX/X}}(\shv{F},-) &= \Hom_{\oh_{\cX_A}}(\shv{F},\shv{F} \otimes_{\oh_{\cX_A}} \pi_A^*(-))\quad \mbox{and}\\
      \Def_{\underline{\Coh}_{\cX/X}}(\shv{F},-) &= \Ext^1_{\oh_{\cX_A}}(\shv{F},\shv{F} \otimes_{\oh_{\cX_A}} \pi_A^*(-)).
    \end{align*}
    By Proposition \ref{P:coh_gms}, the functors above are coherent. 
  \item {[Conditions on obstructions]} If $A$ is a finite type $R$-algebra and $\shv{F} \in \Coh^{\mathrm{flb}}(\cX_A)$, then there is an integer $n$ and a coherent $n$-step obstruction theory for $\shv{F}$. The obstruction theory is described in \cite[\S8]{hallj_openness_coh}. If $\cX \to X$ is flat, then there is the usual $1$-step obstruction theory
    \[
      \mathrm{O}^2(\shv{F},-) = \Ext^2_{\oh_{\cX_A}}(\shv{F},\shv{F} \otimes_{\oh_{\cX_A}} \pi_A^*(-)).
    \]
    If $\cX \to X$ is not flat, then $\mathrm{O}^2(\shv{F},-)$ is the second step in a $2$-step obstruction theory, whose primary obstruction lies in:
    \[
      \mathrm{O}^1(\shv{F},-) = \Hom_{\oh_{\cX_A}}(\shv{F} \otimes_{\oh_{\cX_A}} \shv{T}or_1^{X}(A,\oh_{\cX}),\shv{F} \otimes_{\oh_{\cX_A}} \pi_A^*(-)).
    \]
    By Proposition \ref{P:coh_gms}, these functors are coherent.
  \end{enumerate}
  Hence, $\underline{\Coh}_{\cX/X}$ is an algebraic stack that is
  locally of finite presentation over $X$. Corollary
  \ref{C:aff_hom_fund} now implies that the diagonal is affine. This completes the proof. 
\end{proof}

Corollaries \ref{C:quot} and \ref{C:hilb} follow immediately from Theorem \ref{T:coh}. Indeed, the natural functor
$\underline{\mathrm{Quot}}_{\cX/X}(\cF)\to \underline{\Coh}_{\cX/X}$ is quasi-affine by Corollary \ref{C:aff_hom_fund} and Nakayama's Lemma (see \cite[Lem.~2.6]{lieblich-coherent} for details). 

\begin{proof}[Proof of Theorem \ref{T:hom}]
  This only requires small modifications to the proof of
  \cite[Thm.~1.2]{hallj_dary_coherent_tannakian_duality}, which again
  uses Artin's criterion as formulated in
  \cite[Thm.~A]{hallj_openness_coh}. In more detail: We may assume
  that $X=\Spec R$ is affine; in particular, $X$ is excellent so we
  may apply \cite[Thm.~A]{hallj_openness_coh}. As in the proof of
  Theorem \ref{T:coh}, there are six conditions to check. The
  conditions (1) [Stack], (2) [Limit preservation] and (3)
  [Homogeneity] are largely routine, and ultimately rely upon
  \cite[\S9]{hallj_openness_coh}. Condition (4) [Effectivity] follows
  from an idea due to Lurie, and makes use of Tannaka
  duality. Indeed, if $(A,\mathfrak{m})$ is a noetherian and
  $\mathfrak{m}$-adically complete local $R$-algebra, then the
  effectivity condition corresponds to the natural functor:
  \[
    \Hom_{\Spec A}(\cX_A,\cY_A) \to \varprojlim_n \Hom_{\Spec
      A}(\cX_{A/\mathfrak{m}^{n+1}}, \cY_{A})
  \]
  being an equivalence. The effectivity thus
  follows from coherent completeness (Corollary \ref{C:gb-c28}) and
  Tannaka duality (Corollary \ref{C:tannakian}). Conditions (5)
  and (6) on the coherence of the automorphisms, deformations, and
  obstructions follows from \cite{olsson-defn}, Proposition
  \ref{P:coh_gms}, and the discussion in
  \cite[\S9]{hallj_openness_coh} describing the $2$-term obstruction
  theory. A somewhat subtle point is that we do not deform the
  morphisms directly, but their graph, because \cite{olsson-defn} is
  only valid for representable morphisms. This proves that the stack
  $\underline{\Hom}_{X}(\cX,\cY)$ is algebraic and locally of finite
  presentation over $X$. The conditions on the diagonal follow from
  Corollary \ref{C:aff_hom_fund}, together with some standard
  manipulations of Weil restrictions.
\end{proof}
\begin{proof} [Proof of Corollary \ref{C:homG}]
 A $G$-equivariant morphism $Z\to \cX$ is equivalent to a morphism
 of stacks $[Z/G]\to [\cX/G]$ over $BG$. This gives the $2$-cartesian diagram
 \begin{equation*}
 \xymatrix{
  \underline{\Hom}_S^G(Z,\cX)\ar[r]\ar[d] & \underline{\Hom}_S\bigl([Z/G],[\cX/G]\bigr)\ar[d] \\
  S\ar[r] & \underline{\Hom}_S\bigl([Z/G],BG\bigr)\ar@{}[ul]|\square
 }
 \end{equation*}
 where the bottom map is given by the structure map $[Z/G]\to BG$
 and the right map is given by postcomposition with the structure map
 $[\cX/G]\to BG$. By Theorem \ref{T:hom}, the stacks
 $\underline{\Hom}_S\bigl([Z/G],[\cX/G]\bigr)$ and
 $\underline{\Hom}_S\bigl([Z/G],BG\bigr)$ are algebraic and locally
 of finite type over $S$. The latter always has quasi-affine diagonal and the former has quasi-affine diagonal when $\cX$ has separated diagonal. In particular,
 the bottom map is always quasi-affine. It follows that
 $\underline{\Hom}_S^G(Z,\cX)$ is always an algebraic stack locally
 of finite type over $S$ and has quasi-affine (in particular, quasi-compact and separated) diagonal whenever $\cX$ has quasi-affine diagonal; since $\cX$ is Deligne--Mumford and quasi-separated, this is equivalent to it having separated diagonal. Clearly, $\underline{\Hom}_S^G(Z,\cX)$ has no
 non-trivial infinitesimal automorphisms, hence is a Deligne--Mumford stack. Similarly, if $\cX$ is an algebraic space, then $\underline{\Hom}_S^G(Z,\cX)$ has no
 non-trivial automorphisms, hence is an algebraic space. 
\end{proof}

\subsection{Deligne--Mumford stacks with $\GG_m$-actions} \label{A:drinfeld}
Let $\cX$ be a quasi-separated Deligne--Mumford stack, locally of finite type over a field $k$ (not assumed to be algebraically closed), with an action of $\GG_m$.
Define the following stacks on $\Sch/k$:
\[
\begin{aligned}
\cX^0 		& := \underline{\Hom}^{\GG_m}(\Spec k, \cX) & \quad & \text{(the `fixed' locus)\footnotemark} \\
\cX^+ 		& := \underline{\Hom}^{\GG_m}(\AA^1,\cX) & \quad & \text{(the attractor)}
\end{aligned}
\]
where $\GG_m$ acts on $\AA^1$ by multiplication, and define the stack $\tilde{\cX}$ on $\Sch/\AA^1$ by
\[
\tilde{\cX} 	 := \underline{\Hom}_{\AA^1}^{\GG_m}(\AA^2 , \cX \times \AA^1),
\]
where $\GG_m$ acts on $\AA^2$ via $t \cdot (x,y) = (tx, t^{-1} y)$ and acts on $\AA^1$ trivially, and the morphism $\AA^2 \to \AA^1$ is defined by $(x,y) \mapsto xy$.

\footnotetext{If $\cX$ is an algebraic space, this is the fixed locus.  If $\cX$ is a Deligne--Mumford stack, we will define the {\it fixed locus} $\cX^{\GG_m}$ after allowing reparameterizations of the action; see Definition \ref{D:fixed-locus}.}

\begin{theorem}  \label{T:drinfeld}
With the hypotheses above, $\cX^0$ and $\cX^+$ are quasi-separated Deligne--Mumford stacks, locally of finite type over $k$.  Moreover, the natural morphism $\cX^0 \to \cX$ is a closed immersion, and the natural morphism $\ev_0\co \cX^+ \to \cX^0$ obtained by restricting to the origin is affine. In addition, $\tilde{\cX}$ is a Deligne--Mumford stack, locally of finite type over $k$, which is quasi-separated whenever $\cX$ has quasi-compact and separated diagonal (e.g., an algebraic space).
\end{theorem}

\begin{remark} When $\cX$ is an algebraic space, then $\cX^0$, $\cX^+$ and $\tilde{\cX}$ are algebraic spaces and the above result is due to Drinfeld   \cite[Prop.~1.2.2, Thm.~1.4.2 and Thm.~2.2.2]{drinfeld}.
\end{remark}

The algebraicity of $\cX^0$, $\cX^+$ and $\tilde{\cX}$ follows directly from Corollary \ref{C:homG}.  To establish the final statements, we will need to establish several preliminary results.

\begin{proposition} \label{P:A1-complete}  If $S$ is a noetherian affine scheme, then $[\AA^1_S / \GG_m]$ is coherently complete along $[S / \GG_m]$, where $S\inj \AA^1_S$ is the zero section.
\end{proposition}

\begin{proof}  
Let $A=\Gamma(S,\oh_S)$; then 
$\AA^1_S = \Spec A[t]$ and $V(t) = [S/\GG_m]$. If $\cF \in\Coh([\AA^1_S/\GG_m])$, then 
we claim that there exists an integer $n\gg 0$ such that the natural surjection 
$\Gamma(\cF) \to \Gamma(\cF/t^n\cF)$ is bijective. Now every coherent sheaf on  
$[\AA^1_S/\GG_m]$ is a quotient of a finite direct sum of coherent sheaves of the form 
$p^*\cE_l$, where $\cE_l$ is the weight $l$ representation of $\GG_m$ and $p\co 
[\AA^1_S/\GG_m] \to [S/\GG_m]$ is the natural map. It is enough to prove that
$\Gamma(p^*\cE_l) \to \Gamma(p^*\cE_l/t^n p^*\cE_l)$ is bijective, or equivalently,
that $\Gamma((t^n) \otimes p^*\cE_l) = 0$. But $(t^n) = p^*\cE_n$ and
$\Gamma(p^*\cE_{n+l})=0$ if $n+l>0$, hence for all $n\gg 0$.
We conclude that $\Gamma(\cF) \to \ilim_n \Gamma(\cF/t^n\cF)$ is bijective.
What remains can be proven analogously to Theorem \ref{key-theorem}. 
\end{proof}

\begin{proposition} \label{P:tannakian2}
Let $W$ be an excellent algebraic space over a field $k$ and let $G$ be an algebraic group acting on $W$.  Let $Z \subseteq W$ be a $G$-invariant closed subspace. Suppose that $[W/G]$ is coherently complete along $[Z/G]$.  Let $\cX$ be a noetherian algebraic stack over $k$ with affine stabilizers with an action of $G$.
Then the natural map
$$
\Hom^{G}(W, \cX) \to \ilim_n \Hom^{G}\bigl(W_Z^{[n]}, \cX\bigr)
$$
is an equivalence of groupoids.
\end{proposition}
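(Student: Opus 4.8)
The plan is to reduce the statement to Corollary \ref{C:tannakian} by passing to quotient stacks and carefully tracking the condition of being ``over $BG$''. Write $p_W \co [W/G] \to BG$ and $q_X \co [X/G] \to BG$ for the structure morphisms. First I would recall the standard identification of $G$-equivariant morphisms with morphisms of quotient stacks over $BG$: pulling back the universal torsor $X \to [X/G]$ identifies the groupoid $\Hom_{BG}([W/G],[X/G])$ of morphisms $\phi$ equipped with a $2$-isomorphism $q_X\circ\phi \cong p_W$ with the set $\Hom^G(W,X)$; in particular this groupoid is discrete. The same applies to each thickening, using that $[W/G]^{[n]}_{[Z/G]} = [W_Z^{[n]}/G]$ (nilpotent thickenings commute with the flat cover $W \to [W/G]$), giving $\Hom^G(W_Z^{[n]}, X) \cong \Hom_{BG}([W_Z^{[n]}/G],[X/G])$. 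Note that $[W/G]$ is excellent (it admits a smooth cover by an excellent scheme via an \'etale atlas of $W$) with affine stabilizers and is coherently complete along $[Z/G]$ by hypothesis, while $[X/G]$ and $BG$ are noetherian with affine stabilizers; thus Corollary \ref{C:tannakian} applies to the target $[X/G]$ and also to the target $BG$.

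Next I would invoke Corollary \ref{C:tannakian} to obtain equivalences of categories
\[
\Hom([W/G],[X/G]) \xrightarrow{\ \sim\ } \ilim_n \Hom([W_Z^{[n]}/G],[X/G]), \qquad
\Hom([W/G],BG) \xrightarrow{\ \sim\ } \ilim_n \Hom([W_Z^{[n]}/G],BG),
\]
both compatible with post-composition by $q_X$. The groupoid $\Hom_{BG}([W/G],[X/G])$ is the $2$-categorical fibre of $q_X\circ(-)$ over the object $p_W$, and likewise at each finite level, so the desired bijection is the assertion that passing to this fibre commutes with $\ilim_n$. Concretely, for surjectivity a compatible system $\{f_n \co W_Z^{[n]} \to X\}$ yields a compatible system $\{\phi_n\}$ of stack morphisms together with $2$-isomorphisms $\theta_n \co q_X\circ\phi_n \cong p_{W,n}$; essential surjectivity of the first equivalence produces $\phi\co [W/G]\to[X/G]$ restricting to the $\phi_n$, and full faithfulness of the second equivalence—that is, $\mathrm{Isom}(q_X\circ\phi,\,p_W) \xrightarrow{\ \sim\ } \ilim_n \mathrm{Isom}(q_X\circ\phi_n,\,p_{W,n})$—glues the $\theta_n$ into a unique $2$-isomorphism $q_X\circ\phi \cong p_W$, making $\phi$ a morphism over $BG$, hence a $G$-equivariant $f\co W\to X$ with $f|_{W_Z^{[n]}}=f_n$ (equality, since the target groupoid is discrete). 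For injectivity, two equivariant morphisms agreeing on every $W_Z^{[n]}$ give a compatible system of $BG$-compatible isomorphisms $\phi_n \cong \psi_n$; full faithfulness of the first equivalence supplies a unique $2$-isomorphism $\phi \cong \psi$ restricting to them, and full faithfulness of the second equivalence forces this isomorphism to respect the $BG$-structure, so $f = g$ in the set $\Hom^G(W,X)$.

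The one point requiring care—and the main obstacle—is precisely this bookkeeping of the $BG$-structure: one must verify that the $2$-isomorphisms witnessing ``over $BG$'' are controlled by the full faithfulness half of Corollary \ref{C:tannakian} applied with target $BG$, so that the $2$-fibre over $p_W$ genuinely commutes with the $2$-limit over $n$. Everything else is the formal fact that $2$-fibres are preserved by equivalences of groupoids and that $\Hom^G(W,X)$ is discrete. As an alternative to this reduction, one could instead reprove the essential-surjectivity and full-faithfulness steps directly in the equivariant setting, mirroring the proof of Corollary \ref{C:tannakian} but working throughout with morphisms over $BG$; I expect the $BG$-reduction above to be the shorter route.
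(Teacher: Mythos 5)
Your proposal is correct and follows essentially the same route as the paper: the paper encodes your identification of $\Hom^G(W,X)$ with the $2$-fibre of $q_X\circ(-)$ over $p_W$ as a cartesian square of $\Hom$-groupoids with corners $\Hom([W/G],[X/G])$ and $\Hom([W/G],BG)$, applies Corollary \ref{C:tannakian} to the three corners other than $\Hom^G(W,X)$, and concludes by the compatibility of cartesian squares with the limit over $n$ --- which is exactly your bookkeeping of the $2$-isomorphisms over $BG$ via full faithfulness for the target $BG$.
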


\begin{proof}
As in the proof of Corollary~\ref{C:homG}, we have a cartesian diagram of groupoids
\[
\xymatrix{
  \Hom^G(W,\cX)\ar[r]\ar[d] & \Hom\bigl([W/G],[\cX/G]\bigr)\ar[d] \\
  {*}\ar[r] & \Hom\bigl([W/G],BG\bigr)
}
\]
and a similar cartesian diagram for $W$ replaced with $W_Z^{[n]}$ for any $n$ which gives the cartesian diagram
\[
\xymatrix{
  \ilim_n \Hom^G\bigl(W_Z^{[n]},\cX\bigr)\ar[r]\ar[d] & \ilim_n \Hom\bigl([W_Z^{[n]}/G],[\cX/G]\bigr)\ar[d] \\
  {*}\ar[r] & \ilim_n \Hom\bigl([W_Z^{[n]}/G],BG\bigr).
}
\]
Since $[W/G]$ is coherently complete along $[Z/G]$, it follows by Tannaka duality that the natural maps from the first square to the
second square are isomorphisms.
\end{proof}

\begin{proposition} \label{P:drinfeld-base-change} 
	If $f \co \cX \to \cY$ is an \'etale and representable $\GG_m$-equivariant morphism of quasi-separated Deligne--Mumford stacks of finite type over a field $k$, then
	$\cX^0 = \cY^0 \times_{\cY} \cX$ and $\cX^+ = \cY^+ \times_{\cY^0} \cX^0$.
\end{proposition}

\begin{proof}
  For the first statement, let $x \co S \to \cX$ be a morphism from a scheme $S$ such that the composition $f \circ x \co S \to \cY$ is $\GG_m$-equivariant.  To see that $x$ is $\GG_m$-equivariant, it suffices to base change $f$ by $S \to \cY$ and check that a section $S \to \cX_S$ of $\cX_S \to S$ is necessarily equivariant.  As $\cX_S \to S$ is \'etale and representable, $S \to \cX_S$ is an open immersion, and since any $\GG_m$-orbit in $\cX_S$ is necessarily connected, $S$ is an invariant open of  $\cX_S$.
  
For the second statement,
we need to show that there exists a unique $\GG_m$-equivariant morphism filling in the $\GG_m$-equivariant
diagram
\begin{equation} \label{D:drinfeld}
\begin{split}
\xymatrix{
\Spec k \times S \ar[r] \ar[d]				& \cX \ar[d]^f \\
\AA^1 \times S	\ar[r]	\ar@{-->}[ur]			& \cY
} \end{split} \end{equation}
where $S$ is an affine scheme of finite type over $k$, and the vertical left arrow is the inclusion of the origin.  For each $n \ge 1$, the formal lifting property of \'etaleness yields a unique $\GG_m$-equivariant map $\Spec(k[x]/x^n) \times S \to \cX$ such that 
$$\xymatrix{
\Spec k \times S \ar[r] \ar[d]				& \cX \ar[d]^f \\
\Spec (k[x]/x^n) \times S	\ar[r]	\ar@{-->}[ur]			& \cY
}$$
commutes.   By Propositions \ref{P:A1-complete} and \ref{P:tannakian2}, there exists a unique $\GG_m$-equivariant morphism $\AA^1 \times S \to \cX$ such that \eqref{D:drinfeld} commutes.
\end{proof}

\begin{remark} If $f \co \cX \to \cY$ is not representable, then it is not true in general that $\cX^0 = \cY^0 \times_{\cY} \cX$, e.g., let $f \co B\Gmu_n\to \Spec k$ where $\GG_m$ acts on $B \Gmu_n$ as in Remark \ref{R:not-split}. 
\end{remark}

\begin{remark} It is not true in general that $\cX^+$ is the fiber product of $f \co \cX \to \cY$ along the morphism $\ev_1 \co \cY^+ \to \cY$ defined by $\lambda \mapsto \lambda(1)$.    Indeed, consider the $\GG_m$-equivariant open immersion $\cX=\GG_m \hookrightarrow \AA^1=\cY$, where $\GG_m$ acts by scaling positively.  Then $\cY^+ = \cY$ but $\cX^+$ is empty.  \end{remark}

\begin{proof}[Proof of Theorem \ref{T:drinfeld}]  The algebraicity of $\cX^0$, $\cX^+$ and $\tilde{\cX}$ follows directly from Corollary \ref{C:homG}.  To verify the final statements, we may assume that $k$ is algebraically closed.  For any $\GG_m$-equivariant map $x \co \Spec k \to \cX$, the stabilizer $T_x$ of $x$ (as defined in \eqref{D:stab}) is $\GG_m$ and the map on quotients $B\GG_m \to \cY := [\cX/\GG_m]$ induces a map $\GG_m \to G_y$ on stabilizers providing a splitting of \eqref{E:stab}.  Our generalization of Sumihiro's theorem (Theorem \ref{T:sumi1}) provides an \'etale $\GG_m$-equivariant neighborhood $(\Spec A,u) \to (\cX,x)$.  Proposition \ref{P:drinfeld-base-change} therefore reduces the statements to the case of an affine scheme, which can be established directly; see \cite[\S 1.3.4]{drinfeld}.
\end{proof}

We will now investigate how $\cX^0$ and $\cX^+$ change if we reparameterize the
torus. We denote by $\cX_{\langle d \rangle}$  the Deligne--Mumford stack $\cX$ with $\GG_m$-action induced by the reparameterization $\GG_m \xrightarrow{d} \GG_m$.  For integers $d \, | \, d'$, there are maps $\cX^0_{\langle d \rangle} \to \cX^0_{\langle d' \rangle}$ and $\cX^+_{\langle d \rangle} \to \cX^+_{\langle d' \rangle}$ (defined by precomposing with $\AA^1 \to \AA^1, x \mapsto x^{d'/d}$) that are compatible with the natural maps to $\cX$.

Recall from Theorem~\ref{T:drinfeld} that $\cX^0_{\langle d \rangle}\to \cX$ is a closed immersion. Also recall that for $x \in \cX(k)$,  there is an exact sequence
\begin{equation} \label{E:stab2}
\xymatrix{1 \ar[r] & G_x \ar[r] &  G_y \ar[r] &  T_x \ar[r] & 1,}
\end{equation}
where $y$ is the image of $x$ in $\cY = [\cX/\GG_m]$; and $T_x := \GG_m \times_{\cX} BG_x \subset \GG_m$ is the stabilizer, where $\GG_m  \xrightarrow{\id \times x} \GG_m \times \cX \xrightarrow{\sigma_{\cX}} \cX$ is the restriction of the action map; see also \eqref{D:stab}.

\begin{proposition} \label{P:fixed-point}
Let $\cX$ be a quasi-separated Deligne--Mumford stack, 
locally of finite type over a field $k$, with an action of $\GG_m$.
Let $x\in \cX(k)$ and let $T_x \subset \GG_m$ be its stabilizer.
\begin{enumerate}
	\item \label{P:fixed-point1}
		$x \in \cX^0$ if and only if $T_x = \GG_m$ and \eqref{E:stab2} splits.
	\item \label{P:fixed-point2}
		The following conditions are equivalent:
                \begin{enumerate*}
                \item \label{P:fixed-point2:reparam2a}
                  $x \in \cX^0_{\langle d \rangle}$ for sufficiently
                  divisible integers $d$; 
		\item \label{P:fixed-point2:reparam2b} 
			$T_x = \GG_m$; and
		\item  \label{P:fixed-point2:reparam2c} 
			$\dim G_y = 1$.
	\end{enumerate*}
\end{enumerate}
\end{proposition}
\begin{proof}
For \eqref{P:fixed-point1}, if $x \co \Spec k \to \cX$ is $\GG_m$-equivariant, then clearly $T_x = \GG_m$ and the map on quotients $B\GG_m \to \cY := [\cX/\GG_m]$ induces a map $\GG_m \to G_y$ on stabilizers providing a splitting of \eqref{E:stab2}.  Conversely, a section $\GG_m \to G_y$ providing a splitting of \eqref{E:stab2} induces a section $B \GG_m \to BG_y$ of $BG_y \to B \GG_m$, and taking the base change of the composition $B \GG_m \to BG_y \to \cY \to B\GG_m$ along $\Spec k \to B \GG_m$ induces a unique $\GG_m$-equivariant map $\Spec k \to \cX$.

For \eqref{P:fixed-point2}, it is clear that \itemref{P:fixed-point2:reparam2b} and \itemref{P:fixed-point2:reparam2c} are equivalent, and that they are implied by \itemref{P:fixed-point2:reparam2a}.  On the other hand, if \itemref{P:fixed-point2:reparam2b} holds, then the sequence \eqref{E:stab2} splits after reparameterizing the action by $\GG_m \xrightarrow{d} \GG_m$ for sufficiently divisible integers $d$ (see proof of Theorem~\ref{T:sumi2}). It now follows from \eqref{P:fixed-point1} that $x \in \cX^0_{\langle d \rangle}$.
\end{proof}

\begin{proposition} \label{P:reparam}
Let $\cX$ be a quasi-separated Deligne--Mumford stack, 
locally of finite type over a field $k$, with an action of $\GG_m$.
\begin{enumerate}
	\item  \label{P:reparam1}
	For $d \,| \, d'$, the map $\cX^0_{\langle d \rangle} \to \cX^0_{\langle d' \rangle}$ is an open and closed immersion, and $\cX^+_{\langle d \rangle} = \cX^+_{\langle d' \rangle} \times_{\cX^0_{\langle d' \rangle} } \cX^0_{\langle d \rangle}$.
	\item  \label{P:reparam3}
	If $\cX$ is quasi-compact, then for sufficiently divisible integers $d$ and $d'$, $\cX^0_{\langle d \rangle} = \cX^0_{\langle d' \rangle}$ and $\cX^+_{\langle d \rangle} = \cX^+_{\langle d' \rangle}$.
\end{enumerate}
\end{proposition}

\begin{proof} We may assume $k$ is algebraically closed.
For \eqref{P:reparam1}, since $\cX^0_{\langle d \rangle} \to \cX$ is a closed immersion for all $d$ (Theorem \ref{T:drinfeld}), we see that $\cX^0_{\langle d \rangle} \to \cX^0_{\langle d' \rangle}$ is a closed immersion.   For any $x \in \cX^0(k)$, Theorem \ref{T:sumi1} provides an \'etale $\GG_m$-equivariant morphism $(U, u) \to (\cX, x)$ where $U$ is an affine scheme.  By Proposition \ref{P:drinfeld-base-change}, the maps $\cX^0 \to \cX^0_{\langle d \rangle}$ and $\cX^+ \to \cX^+_{\langle d \rangle}$ pullback to the isomorphisms $U^0 \to U^0_{\langle d \rangle}$ and $U^+ \to U^+_{\langle d \rangle}$. This shows both that $\cX^0 \to \cX^0_{\langle d \rangle}$ is an open immersion and that $\cX^+ = \cX^+_{\langle d \rangle} \times_{\cX^0_{\langle d \rangle} } \cX^0$, which implies \eqref{P:reparam1}.

For \eqref{P:reparam3}, the locus of points in $\cY=[\cX/\GG_m]$ with a positive dimensional stabilizer is a closed substack.  It follows from Proposition \ref{P:fixed-point}\eqref{P:fixed-point2} that the locus of points in $\cX$ contained in $\cX^0_{\langle d \rangle}$ for some $d$ is also closed.  In particular, the substacks $\cX^0_{\langle d \rangle}$ stabilize for sufficiently divisible integers $d$ and by \eqref{P:reparam1} the stacks $\cX^+_{\langle d \rangle}$ also stabilize.
\end{proof}

Proposition \ref{P:reparam}\eqref{P:reparam3} justifies the following definition.

\begin{definition} \label{D:fixed-locus}
Let $\cX$ be a quasi-separated Deligne--Mumford stack, 
locally of finite type over a field $k$, with an action of $\GG_m$.  The {\it fixed locus} is the closed substack of $\cX$ defined as
$$\cX^{\GG_m} := \bigcup_d \cX_{\langle d \rangle}^0.$$
\end{definition}

\begin{remark}
Consider the action of $\GG_m$ on $\cX=B \Gmu_n$ as in Remark \ref{R:not-split}.  Then $\cX^0$ is empty but $\cX_{\langle d \rangle}^0 = \cX$ for all integers $d$ divisible by $n$.  Thus, $\cX^{\GG_m} = \cX$.
\end{remark}

\subsection{Bia\l ynicki-Birula decompositions for Deligne--Mumford stacks} \label{A:BB}

We provide the following theorem establishing the existence of Bia\l ynicki-Birula decompositions for a Deligne--Mumford stack $\cX$.  Our proof relies on the algebraicity of the stacks $\cX^0 = \underline{\Hom}^{\GG_m}(\Spec k, \cX)$ and $\cX^+ = \underline{\Hom}^{\GG_m}(\AA^1,\cX)$ (Theorem \ref{T:drinfeld}) and the existence of $\GG_m$-equivariant \'etale affine neighborhoods (Theorem \ref{T:sumi1}).  In particular, our argument recovers the classical result from \cite[Thm.~4.1]{bb}.   Due to subtleties arising from group actions on stacks, the proof is substantially simpler in the case that $\cX$ is an algebraic space, and the reader may want to consider this special case on a first reading.

\begin{theorem} \label{T:bb}
Let $\cX$ be a separated Deligne--Mumford stack, of finite type over an arbitrary field $k$, with an action of $\GG_m$.  Let $\cX^{\GG_m} = \coprod_i \cF_i$ be the fixed locus (see Definition \ref{D:fixed-locus}) with connected components $\cF_i$.  There exists an affine morphism $\cX_i \to \cF_i$ for each $i$ and a monomorphism $\coprod_i \cX_i \to \cX$.  Moreover,
\begin{enumerate}
\item \label{T:bb-proper}
	If $\cX$ is proper, then $\coprod_i \cX_i \to \cX$ is surjective.
\item \label{T:bb-smooth}
	If $\cX$ is smooth, then $\cF_i$ is smooth and $\cX_i \to \cF_i$ is an affine fibration (i.e., $\cX_i$ is affine space \'etale locally over $\cF_i$).
\item \label{T:bb-locally-closed}
	Let $\cX \to X$ be the coarse moduli space.
\begin{enumerate}
\item \label{T:bb-locally-closed-affine}
	If $X$ is affine, then $\cX_i \hookrightarrow \cX$ is a  closed immersion.
\item \label{T:bb-locally-affine}
	If $X$ has a $\GG_m$-equivariant affine open cover (e.g., $X$ is a normal scheme), then
\begin{enumerate}[label=(\roman*),ref=\roman*]
	\item \label{T:bb-locally-affine1} 
	$\cX_i \hookrightarrow \cX$ is a local immersion 
	(i.e., a locally closed immersion Zariski-locally on the source) and $\cX_i \to \cX \times \cF_i$ is a locally closed immersion; and
	\item \label{T:bb-locally-affine2} 
	if $\cZ \subset \cX_i$ is an irreducible component, then $\cZ \hookrightarrow \cX$ is a locally closed immersion.
\end{enumerate}
\item \label{T:bb-locally-closed-smooth-stack}
	If $\cX$ is smooth and $X$ is a scheme, then $\cX_i \hookrightarrow \cX$ is a locally closed immersion.
\item \label{T:bb-locally-closed-quasi-projective}
	If there exists a $\GG_m$-equivariant locally closed immersion $X \hookrightarrow \PP(V)$ where $V$ is a $\GG_m$-representation (e.g., $\cX$ normal and $X$ is quasi-projective), then $\cX_i \hookrightarrow \cX$ is a locally closed immersion.
\end{enumerate}
\end{enumerate}
\end{theorem}

\begin{remark} If $\cX$ is a smooth scheme and $k$ is algebraically closed, then this statement (except Case \eqref{T:bb-locally-affine}) is the classical Bia\l{}ynicki-Birula decomposition theorem \cite[Thm.~4.1]{bb} (using Sumihiro's theorem \cite[Cor.~2]{sumihiro} ensuring that $\cX$ has a $\GG_m$-equivariant affine open cover).
 If $\cX$ is an algebraic space, then this 
was established in  \cite[Thm.~B.0.3]{drinfeld} (except Case \eqref{T:bb-locally-affine}).  Our formulation of Case \eqref{T:bb-locally-affine}\eqref{T:bb-locally-affine1} was motivated by \cite[Thm.~4.5, p.~69]{hesselink}  and \cite[Prop.~13.58]{milne} and
Case \eqref{T:bb-locally-affine}\eqref{T:bb-locally-affine2} was motivated by \cite[Prop.~7.6]{js-bb}.

Using Drinfeld's results and our Theorem \ref{T:sumi1}, {Jelisiejew} and {Sienkiewicz} establish the theorem above when $\cX$ is an algebraic space as a special case of \cite[Thm.~1.5]{js-bb} and their proof in particular  recovers  the main result of \cite{bb}.  Our proof follows a similar strategy by relying on results of the previous section and Theorem \ref{T:sumi1} to reduce to the affine case.
\end{remark}

\begin{remark} It is not true in general that $\cX_i \hookrightarrow \cX$ is a locally closed immersion.
\begin{enumerate}
\item The condition in \eqref{T:bb-locally-closed-smooth-stack} that $X$ is a scheme is necessary.    Sommese has given an example of a smooth algebraic space $X$ such that $X_i \hookrightarrow X$ is not a locally closed immersion \cite{sommese}. This is based on Hironaka's example of a proper, non-projective, smooth 3-fold.
\item The condition in \eqref{T:bb-locally-closed-quasi-projective} that $X$ is quasi-projective is necessary.  Konarski has provided an example of a normal proper scheme $X$ (a toric variety) such that $X_i \hookrightarrow X$ is not a locally closed immersion \cite{konarski}.
\end{enumerate}

For a smooth Deligne--Mumford stack $\cX$ with a $\GG_m$-action, \cite[Prop.~5]{oprea} states that the existence of a Bia\l ynicki-Birula decomposition with each $\cX_i \hookrightarrow \cX$ locally closed follows from the existence of a $\GG_m$-equivariant, \'etale atlas $\Spec A \to \cX$ (as provided by Theorem \ref{T:sumi2}).  The counterexamples above show that \cite[Prop.~5]{oprea} is incorrect.  Nevertheless, the main result \cite[Thm.\ 2]{oprea} still holds as a consequence of Theorem~\ref{T:bb} since the Deligne--Mumford stack $\overline{\cM}_{0,n}(\PP^r,d)$ of stable maps is smooth and its coarse moduli space is a scheme (Case \eqref{T:bb-locally-closed-smooth-stack}).

Moreover, \cite[Thm.~3.5]{skowera} states the above theorem in the case that $\cX$ is a smooth, proper and tame Deligne--Mumford stack with $X$ a scheme but the proof is not valid as it relies on \cite[Prop.~5]{oprea}.  A similar error appeared in a previous version of our article where it was claimed incorrectly that $\cX_i \hookrightarrow \cX$ is a locally closed immersion for any smooth, proper Deligne--Mumford stack.
\end{remark}

\begin{remark} Let $X$ be a separated scheme of finite type over $k$ with finite quotient singularities and with a $\GG_m$-action.  There is a canonical smooth Deligne--Mumford stack $\cX$ whose coarse moduli space is $X$ (see \cite[\S 4.1]{fantechi-mann-nironi}).  The $\GG_m$-action lifts canonically to $\cX$.  Applying 
Theorem \ref{T:bb}\eqref{T:bb-locally-closed-smooth-stack} to $\cX$ and appealing to Proposition \ref{P:cms}\eqref{P:cms-climmersion-nilimmersion-both}, we can conclude that the components $X_i$ of $X^+$ are locally closed in $X$.
\end{remark}

The following proposition establishes properties of the evaluation map $\ev_1 \co \cX^+ \to \cX, \lambda \mapsto \lambda(1)$ in terms of properties of $\cX$.  We  find it prudent to state a relative version that for a given morphism $f \co \cX \to \cY$ between Deligne--Mumford stacks
establishes properties of the relative evaluation map
$$\ev_f \co \cX^+ \to \cY^+ \times_{\cY} \cX, \lambda \mapsto (f \circ \lambda, \lambda(1))$$
in terms of properties of $f$. In the proof of Theorem~\ref{T:bb} we will only use the absolute case where $\cY=\Spec k$.

\begin{proposition} \label{P:ev1}
Let $f \co \cX \to \cY$ be a $\GG_m$-equivariant morphism of quasi-separated Deligne--Mumford stacks that are
locally of finite type over an arbitrary field $k$.
\begin{enumerate}
	\item \label{P:ev1-unramified}
	$\cX^+ \to \cY^+ \times_{\cY} \cX$ is unramified.		
	\item \label{P:ev1-representable}
	If $f \co \cX \to \cY$ has separated diagonal, then $\cX^+ \to \cY^+ \times_{\cY} \cX$ is representable.
	\item \label{P:ev1-monomorphism}
	 If $f \co \cX \to \cY$ is separated, then $\cX^+ \to \cY^+ \times_{\cY} \cX$ is a monomorphism.
	\item \label{P:ev1-surjective}
	If $f \co \cX \to \cY$ is proper and $\cY$ is quasi-compact, then $\cX_{\langle d \rangle}^+ \to  \cY_{\langle d \rangle}^+ \times_{\cY} \cX$ is surjective for sufficiently divisible integers $d$.
\end{enumerate}
 \end{proposition}
 
\begin{proof}
We may assume that $k$ is algebraically closed.
For \eqref{P:ev1-unramified}, it suffices to show that $\cX^+ \to \cX$ is unramified.  We follow the argument of \cite[Prop.~1.4.11(1)]{drinfeld}.  We need to check that for any $(\AA^1 \xrightarrow{\lambda} \cX) \in \cX^+(k)$, the induced map $T_{\lambda} \cX^+ \to T_{\lambda(1)} \cX$ on tangent spaces is injective.  This map can be identified with the restriction map
\[
\Hom_{\oh_{\AA^1}}^{\GG_m}(\lambda^* \Omega_{\cX}^1, \oh_{\AA^1}) \to \Hom_{\oh_{\AA^1 \setminus 0}}^{\GG_m}((\lambda^* \Omega_{\cX}^1)|_{\AA^1 \setminus 0}, \oh_{\AA^1 \setminus 0}),
\]
which is clearly injective.

For \eqref{P:ev1-representable}, let  $(\lambda \co \AA^1 \to \cX) \in \cX^+(k)$. Automorphisms $\tau_1, \tau_2 \in \Aut_{\cX^+}(\lambda)$ mapping to the same automorphism of $\ev_f(\lambda)$ induce two sections $\tilde{\tau}_1, \tilde{\tau}_2$ of $\Isom_{\cX/\cY}(\lambda) \to \AA^1$ agreeing over $\AA^1 \setminus 0$.  The valuative criterion for separatedness implies that $\tilde{\tau}_1 = \tilde{\tau}_2$ and thus $\tau_1 = \tau_2$.

For \eqref{P:ev1-monomorphism}, since  $\cX^+ \to \cY^+ \times_{\cY} \cX$  is unramified and representable, it is enough to prove that is universally injective. A $k$-point of $\cY^+ \times_{\cY} \cX$ with two preimages in $\cX^+$ corresponds to a $\GG_m$-equivariant $2$-commutative square
$$\xymatrix{
\AA^1 \setminus 0 \ar@{^(->}[d] \ar[r]								&  	\cX \ar[d]^f\\
\AA^1  \ar[r] \ar@<0.5ex>[ur]^{h_1} \ar@<-0.5ex>[ur]_{h_2}	& \cY
}$$
with two $\GG_m$-equivariant lifts $h_1, h_2 \co \AA^1 \to \cX$. We need to produce a $\GG_m$-equivariant $2$-isomorphism $h_1\iso h_2$. As $f \co \cX \to \cY$ is separated, $I:=\Isom_{\cX/\cY}(h_1, h_2) \to \AA^1$ is proper. The 2-isomorphism $h_1|_{\AA^1 \setminus 0} \iso h_2|_{\AA^1 \setminus 0}$ gives a $\GG_m$-equivariant section of $I\to \AA^1$ over $\AA^1 \setminus 0$ and the closure of its graph gives a $\GG_m$-equivariant section of $I\to \AA^1$, i.e., a $\GG_m$-equivariant 2-isomorphism $h_1 \iso h_2$.

For \eqref{P:ev1-surjective}, by Proposition \ref{P:reparam}\eqref{P:reparam3}, it suffices to show that a $\GG_m$-equivariant commutative diagram
$$\xymatrix{
\AA^1 \setminus 0 \ar[r] \ar@{^(->}[d]				& \cX \ar[d]^f \\
\AA^1 \ar[r]^{\lambda} \ar@{-->}[ur]^{\lambda'}						& \cY
}$$
of solid arrows admits a $\GG_m$-equivariant lift $\lambda' \co \AA^1 \to \cX$ after
reparameterizing the action.  Let $x \in \cX(k)$ be the image of $1$ under $\AA^1 \setminus 0 \to \cX$.  After replacing $\cX$ with the closure of $\im(\AA^1 \setminus 0 \to \cX)$ we may assume that $\cX$ is integral  of dimension $\le 1$.  Let $\cX' \to \cX$ be the normalization and choose a preimage $x'$ of $x$.  Since $\cX' \to \cY$ is proper, the induced map $\AA^1 \setminus 0 \to \cX'$, defined by $t \mapsto t \cdot x'$, admits a unique lift $h \co C \to \cX'$ compatible with $\lambda$ after a ramified extension $(C,c) \to (\AA^1,0)$.   Let $x'_0 = h(c) \in \cX'(k)$.

 By Theorem \ref{T:sumi2}, there exists $d > 0$ and a $\GG_m$-equivariant map $(\Spec A,w_0) \to (\cX'_{\langle d \rangle}, x'_0)$ with $w_0$ fixed by $\GG_m$.  If $\dim \cX = 0$, then we may assume $A=k$ and so in this case the composition $\AA^1 \to \Spec k \xrightarrow{x} \cX_{\langle d \rangle}$ gives the desired map.  If $\dim \cX = 1$, then we may assume that $\Spec(A)$ is a smooth and irreducible affine curve with two orbits---one open and one closed.  It follows that $\Spec(A)$ is $\GG_m$-equivariantly isomorphic to $\AA^1$ and the composition $\AA^1 \to \cX'_{\langle d \rangle} \to \cX_{\langle d \rangle}$ gives the desired map.
\end{proof}

\begin{proposition} \label{P:cms}
Let $\pi \co \cX \to \cY$ be a $\GG_m$-equivariant morphism
of quasi-separated 
Deligne--Mumford stacks, 
of finite type over an arbitrary field $k$.  If $\pi$ is proper and quasi-finite (e.g., $\pi$ is a coarse moduli space), then
\begin{enumerate}
	\item \label{P:cms-proper}
          $\cX^+ \to \cY^+$ is proper and
        \item \label{P:cms-climmersion-nilimmersion-both} 
          the maps $\cX^0_{\langle d \rangle} \to \cY^0_{\langle d \rangle} \times_{\cY} \cX$ and $\cX^+_{\langle d\rangle} \to \cY^+_{\langle d \rangle} \times_{\cY} \cX$ are closed immersions for all $d>0$ and nilimmersions for $d$ sufficiently divisible. In particular, $\cX^{\GG_m} \to \cY^{\GG_m} \times_{\cY} \cX$ is a nilimmersion.
\end{enumerate}
\end{proposition}

\begin{proof}
For \eqref{P:cms-proper} it is enough to prove that $\ev_\pi\colon
\cX^+ \to \cY^+\times_\cY \cX$ is proper. First observe that $\cX^+$ and $\cY^+$ are quasi-compact: via $\ev_0$, they are affine over $\cX$ and $\cY$, respectively (Theorem \ref{T:drinfeld}).  Since $\cX$ and $\cY$ are quasi-separated, it follows that $\ev_\pi \colon \cX^+ \to \cY^+\times_{\cY} \cX$ is quasi-compact. We may now use the valuative criteria to verify that $\ev_\pi$ is proper.  Let
\begin{equation} \label{E:cms-proper}
\begin{split}
\xymatrix{
\Spec K \ar[d] \ar[r]	& \cX^+ \ar[d]^{\ev_\pi} \\
\Spec R \ar[r] \ar@{-->}[ur]	& \cY^+\times_\cY \cX
}
\end{split} 
\end{equation}
be a diagram of solid arrows where $R$ is a DVR with fraction field $K$.   This corresponds to a $\GG_m$-equivariant diagram
\begin{equation} \label{E:cms-proper2}
\begin{split}
\xymatrix{
\AA^1_R \setminus 0 \ar[r]	\ar@{^(->}[d]	& \cX \ar[d]^{\pi} \\
\AA^1_R \ar[r]	\ar@{-->}[ur]			& \cY
}
\end{split} 
\end{equation}
of solid arrows, where $0 \in \AA^1_R$ denotes the unique closed point fixed by $\GG_m$. Equivalently, we have a diagram
\begin{equation} \label{E:cms-proper3}
\begin{split}
\xymatrix{
[(\AA^1_R \setminus 0)/\GG_m] \ar[r]	\ar@{^(->}[d]	& [\cX/\GG_m] \ar[d]^{\pi} \\
[\AA^1_R/\GG_m] \ar[r]	\ar@{-->}[ur]			& [\cY/\GG_m]
}
\end{split} 
\end{equation}
of solid arrows. A dotted arrow providing a lift of \eqref{E:cms-proper3} is the same as a $\GG_m$-equivariant dotted arrow providing a lift of \eqref{E:cms-proper2} or a lift in \eqref{E:cms-proper}. Now let $U=[(\AA_R^1\setminus 0)/\GG_m]$ and $S= [\AA^1_R/\GG_m]$. By base change, a dotted arrow providing a lift of \eqref{E:cms-proper3} is the same as a section to the projection $S\times_{[\cY/\GG_m]}[\cX/\GG_m] \to S$ extending the induced section over $U$. Since $S$ is regular and $U$ contains all points of $S$ of codimension $1$, we may apply Lemma \ref{L:extension}\eqref{L:extension-section} to deduce the claim. (It is worthwhile to point out that the existence of lifts in \eqref{E:cms-proper3} is equivalent to the map $[\cX/\GG_m] \to [\cY/\GG_m]$ being $\Theta$-reductive, as introduced in \cite{dhl-instability}, and that Lemma \ref{L:extension}\eqref{L:extension-section}  implies that $[\cX/\GG_m] \to [\cY/\GG_m]$  is $\Theta$-reductive.)

For \itemref{P:cms-climmersion-nilimmersion-both}, Theorem
\ref{T:drinfeld} implies that $\cX^0_{\langle d \rangle} \to \cX$ and
$\cY^0_{\langle d \rangle} \to \cY$ are closed immersions for all $d>0$;
thus,
$\cX^0_{\langle d \rangle} \to \cY^0_{\langle d \rangle} \times_{\cY}
\cX$ is a closed immersion. For $d$ sufficiently divisible, it is now
easily checked using the quasi-finiteness of $\pi$ that the morphisms in question are also surjective. Also, for all $d>0$, the map $\cX_{\langle d \rangle}^+ \to \cY_{\langle d \rangle}^+ \times_{\cY} \cX$ is proper by
 \eqref{P:cms-proper} and a monomorphism by Proposition \ref{P:ev1}\eqref{P:ev1-monomorphism}, and thus a closed immersion.  The surjectivity of $\cX_{\langle d \rangle}^+ \to \cY_{\langle d \rangle}^+ \times_{\cY} \cX$ for sufficiently divisible $d$ follows from Proposition \ref{P:ev1}\eqref{P:ev1-surjective}.
\end{proof}

\begin{lemma} \label{L:extension}
Let $S$ be a regular algebraic stack and let $U \subset S$ be an open substack containing all points of codimension $1$.  Let $f \co \cX \to S$ be a quasi-finite morphism that is relatively Deligne--Mumford.
\begin{enumerate}
	\item \label{L:extension-etale}
	If $f|_U \co f^{-1}(U) \to U$ is \'etale, then $f \co \normin{\cX}{U} \to S$ is \'etale, where $\normin{\cX}{U}$ denotes the normalization of $\cX$ in $f^{-1}(U)$. 
	\item \label{L:extension-section}
	If $f \co \cX \to S$ is proper and $f|_U$ has a section, then $f \co \cX \to S$ has a section.
\end{enumerate}
\end{lemma}

\begin{proof}
For \eqref{L:extension-etale}, as the question is smooth-local on $S$ and \'etale-local on $\cX$, we may assume that $\cX$ and $S$ are irreducible schemes. Now the statement follows from Zariski--Nagata purity \cite[Exp.~X, Cor.~3.3]{sga1}.  For \eqref{L:extension-section}, by Zariski's Main Theorem \cite[Thm.~16.5(ii)]{MR1771927}, we may factor a section $U \to \cX$ as $U \hookrightarrow \cV \to \cX$ where $U \hookrightarrow {\cV}$ is a dense open immersion and ${\cV} \to \cX$ is a finite morphism. Since $\cV\to S$ is proper, quasi-finite and an isomorphism over $U$, it follows that $\normin{{\cV}}{U} \to S$ is proper and \'etale by \eqref{L:extension-etale}.  As $I_{\normin{{\cV}}{U}/S} \to \normin{{\cV}}{U}$ is finite, \'etale and generically an isomorphism, it is an isomorphism and we conclude that $\normin{{\cV}}{U}\to S$ is representable.  Then $\normin{{\cV}}{U} \to S$ is finite, \'etale and generically an isomorphism, thus an isomorphism.  
\end{proof}

\begin{proof}[Proof of Theorem \ref{T:bb}]
After reparameterizing the action by $\GG_m \xrightarrow{d} \GG_m$ for $d$ sufficiently divisible, we may assume that $\cX^0 = \cX^{\GG_m}$ (Proposition \ref{P:reparam}).  Theorem \ref{T:drinfeld} yields quasi-separated Deligne--Mumford stacks $\cX^0 =  \underline{\Hom}^{\GG_m}(\Spec k, \cX)$ and $\cX^+ = \underline{\Hom}^{\GG_m}(\AA^1,\cX)$, locally of finite type over $k$, such that the morphism $\ev_0 \co \cX^+ \to \cX^0$ is affine.  Let $\cX^0 = \coprod_i \cF_i$ be the decomposition into connected components and set $\cX_i := \ev_0^{-1}(\cF_i)$.  Since $\cX$ is separated, $\coprod_i \cX_i \to \cX$ is a monomorphism (Proposition \ref{P:ev1}\eqref{P:ev1-monomorphism}).  This establishes the main part of the theorem. Part \eqref{T:bb-proper} follows from Proposition \ref{P:ev1}\eqref{P:ev1-surjective}.

We now establish \eqref{T:bb-smooth} and \eqref{T:bb-locally-closed} in stages of increasing generality.  If $\cX$ is an affine space with a linear $\GG_m$-action, then it is easy to see that $\cX^+ \to \cX^0$ is a projection of linear subspaces.  If $\cX = \Spec(A)$ is affine and  $A = \bigoplus_{d} A_d$ denotes the induced $\ZZ$-grading, then a direct calculation shows that $\cX^+ = V(\sum_{d < 0} A_d)$ and $\cX^0 = V(\sum_{d \neq 0} A_d)$ are closed subschemes; see \cite[\S 1.3.4]{drinfeld}.

To see \eqref{T:bb-smooth}, we may assume that $k$ is algebraically closed.  When $\cX=\Spec A$ is affine, let $x \in \cX^0(k)$ be a fixed point defined by a maximal ideal $\fm \subset A$.  The surjection $\fm \to \fm/\fm^2$ admits a $\GG_m$-equivariant section which in turn induces a morphism $\cX \to T_{\cX,x} = \Spec(\Sym \fm/\fm^2)$ which is \'etale at $x$, and \eqref{T:bb-smooth} follows from  \'etale descent using the case above of affine space and Proposition \ref{P:drinfeld-base-change}.
In general, Proposition \ref{P:fixed-point} and Sumihiro's theorem (Theorem \ref{T:sumi1}) imply that any point of $\cX^0$ has an equivariant affine \'etale neighborhood and thus Proposition \ref{P:drinfeld-base-change} reduces \eqref{T:bb-smooth} to the case of an affine scheme.

For  \eqref{T:bb-locally-closed}, let $X^+$ and $X_i$ be the coarse moduli spaces of $\cX^+$ and $\cX_i$.  For \eqref{T:bb-locally-closed-affine}, the above discussion shows that since $X$ is affine, $X^+ \to X$ is a closed immersion. Since $\cX^+ \to  X^+ \times_{X} \cX$ is a nilimmersion (Proposition \ref{P:cms}\eqref{P:cms-climmersion-nilimmersion-both}), $\cX^+ \to \cX$ is also a closed immersion.  

For \eqref{T:bb-locally-affine}, by Proposition \ref{P:cms}\eqref{P:cms-climmersion-nilimmersion-both}, we may assume that $\cX=X$.  For any point $x \in X^+$, let $x_0$ be the image of $x$ under $\ev_0 \co X^+ \to X^0$, and choose a $\GG_m$-invariant affine open neighborhood $U \subset X$ of $x_0$. 
This induces a diagram
\begin{equation} \begin{split} \label{E:locally-affine}
\xymatrix{
U^+ \ar@{^(->}[r] \ar@{^(->}[rd]		& \ev_1^{-1}(U) \ar[r] \ar@{^(->}[d]		& U \ar@{^(->}[d] \\
				& X^+ \ar[r]^{\ev_1}				& X .
}
\end{split} \end{equation}
Since $U^+ \to U$ is a closed immersion (as $U$ is affine) and $X^+ \to X$ is separated (it is a monomorphism), $U^+ \to \ev_1^{-1}(U)$ is a closed immersion.  Since $U^+ = X^+ \times_{X^0} U^0$ (Proposition \ref{P:drinfeld-base-change}), $x \in U^+$ and $U^+ \to X^+$ is an open immersion.  In particular, $U^+ \subset \ev_1^{-1}(U)$ is an open and closed subscheme containing $x$. 

For \eqref{T:bb-locally-affine1}, for any $x \in X^+$, we observe from Diagram \eqref{E:locally-affine} that $U^+ \to X^+ \to X$ is a locally closed immersion.  Moreover, $U \times U^0$ is an open neighborhood of $(\ev_1(x),x_0)$.  Since the restriction of $(\ev_1, \ev_0) \co X^+ \to X \times X^0$ over $U \times U^0$ is the closed immersion $U^+ \to U \times U^0$, it follows that $X^+ \to X \times X^0$ is a locally closed immersion.

For \eqref{T:bb-locally-affine2}, let $Z \subset X^+$ be an irreducible component and $x \in Z$.  Then $Z \cap U^+$ is a nonempty open and closed subscheme of the irreducible scheme  $Z \cap \ev_1^{-1}(U)$.  This shows that $Z \cap U^+ = Z \cap \ev_1^{-1}(U)$ and that $Z \cap \ev_1^{-1}(U) \to U$ is a closed immersion.  It follows that $Z \hookrightarrow X^+ \xrightarrow{\ev_1} X$ is a locally closed immersion.

For \eqref{T:bb-locally-closed-smooth-stack}, observe that \eqref{T:bb-smooth} implies that $\cX_i$ is smooth and connected,  thus irreducible.  Since the coarse moduli space $X$ is necessarily normal and thus admits a $\GG_m$-equivariant open affine cover by Sumihiro's theorem \cite[Cor.~2]{sumihiro}, the conclusion follows from \eqref{T:bb-locally-affine}\eqref{T:bb-locally-affine2}.

For \eqref{T:bb-locally-closed-quasi-projective}, it suffices to show that $X_i \hookrightarrow X$ is a locally closed immersion by Proposition \ref{P:cms}\eqref{P:cms-climmersion-nilimmersion-both}.  This statement is easily reduced to the case of $X=\PP(V)$, using a special case of Proposition \ref{P:cms}\eqref{P:cms-climmersion-nilimmersion-both}. For $X=\PP(V)$ a direct calculation shows that each $X_i$ is of the form $\PP(W) \setminus \PP(W')$ for linear subspaces $W' \subset W \subset V$.
\end{proof}

\appendix 

\section{Equivariant Artin algebraization} \label{A:algebraization} \label{A:approx} 

In this appendix, we give an equivariant generalization of Artin's algebraization theorem \cite[Thm.~1.6]{artin-algebraization}.  We follow the approach of \cite{conrad-dejong} using Artin approximation and an effective version of the Artin--Rees lemma.

The main results of this appendix (Theorems \ref{T:approximate-algebraization} and \ref{T:algebraization}) are formulated in greater generality than necessary to prove Theorem \ref{T:field}.  We feel that these results are of independent interest and will have further applications.  In particular, in the subsequent article \cite{ahr2} we will apply the results of this appendix to prove a relative version of Theorem \ref{T:field}.   

\subsection{Good moduli space morphisms are of finite type}
Let $G$ be a group acting on a noetherian ring~$A$.
Goto--Yamagishi~\cite{MR706507} and Gabber~\cite[Exp.~IV,
  Prop.~2.2.3]{MR3309086} have proven that $A$ is finitely generated over $A^G$
when $G$ is either diagonalizable (Goto--Yamagishi) or finite and tame
(Gabber). Equivalently, the good moduli space morphism $\stX=[\Spec A/G]\to
\Spec A^G$ is of finite type. The following theorem generalizes this result to
every noetherian stack with a good moduli space.

\begin{theorem} \label{T:good-finite-type}
Let $\cX$ be a noetherian algebraic stack. If $\pi \co \cX \to X$ is a good moduli space with affine diagonal, then $\pi$ is of finite type.
\end{theorem}

\begin{proof}
We may assume that $X = \Spec A$ is affine.  
Let $p \co U=\Spec B \to \cX$ be an affine presentation.  Then $\pi_* (p_* \oh_{U}) = \tilde{B}$.  We need to show that $B$ is a finitely generated $A$-algebra.  This follows from the following lemma. 
\end{proof}

\begin{lemma}
If $\cX$ is a noetherian algebraic stack and $\pi \co \cX \to X$ is a good moduli space, then $\pi_*$ preserves finitely generated algebras.
\end{lemma}

\begin{proof}
Let $\cA$ be a finitely generated $\oh_{\cX}$-algebra. Write $\cA = \dlim_{\lambda} \cF_{\lambda}$ as a union of its finitely generated $\oh_{\cX}$-submodules. Then $\cA$ is generated as an $\oh_{\cX}$-algebra by $\cF_\lambda$ for sufficiently large $\lambda$; that is,  we have a surjection $\Sym(\cF_\lambda) \to \cA$. 
Since $\pi_*$ is exact, it is enough to prove that $\pi_* \Sym(\cF_\lambda)$ is finitely generated. 
But $C:= \Gamma(\cX, \Sym(\cF_\lambda))$ is a $\ZZ$-graded ring which is noetherian by \cite[Thm.~4.16(x)]{alper-good} since $\underline{\Spec}_{\cX}(\Sym(\cF_\lambda))$ is noetherian and $\Spec(C)$ is its good moduli space. It is well-known that $C$ is then finitely generated over $C_0=\Gamma(\cX, \oh_{\cX})=A$. 
\end{proof}

\subsection{Artinian stacks and adic morphisms}

\begin{definition}
We say that an algebraic stack $\stX$ is \emph{artinian} if it is noetherian
and $|\stX|$ is discrete. We say that a quasi-compact and quasi-separated
algebraic stack $\stX$ is \emph{local} if there exists a unique closed point
$x\in |\stX|$.
\end{definition}

Let $\stX$ be a noetherian algebraic stack and let $x\in |\stX|$ be a closed
point with maximal ideal $\idm_x \subset \oh_{\cX}$. The \emph{$n$th infinitesimal neighborhood of
  $x$} is the closed algebraic stack $\stX_x^{[n]}\inj \stX$ defined by
$\idm_x^{n+1}$. Note that $\stX_x^{[n]}$ is artinian and that $\stX_x^{[0]}=\stG_x$ is
the residual gerbe. A local artinian stack $\stX$ is a local artinian scheme
if and only if $\stX_x^{[0]}$ is the spectrum of a field.
\begin{definition} \label{D:adic}
Let $\stX$ and $\stY$ be algebraic stacks, and let $x\in |\stX|$ and $y\in
|\stY|$ be closed points. If $f\colon (\stX,x)\to (\stY,y)$ is a pointed
morphism, then $\stX_x^{[n]}\subseteq f^{-1}(\stY_y^{[n]})$ and we let
$f^{[n]}\colon \stX_x^{[n]}\to \stY_y^{[n]}$ denote the induced morphism.
We say that $f$ is \emph{adic} if $f^{-1}(\stY_y^{[0]})=\stX_x^{[0]}$.
\end{definition}

Note that $f$ is adic precisely when $f^*\idm_y\to \idm_x$ is surjective.
When $f$ is adic, we thus have that $f^{-1}(\stY_y^{[n]})=\stX_x^{[n]}$ for all
$n\geq 0$. Every closed immersion is adic.

\begin{proposition}
Let $\stX$ be a quasi-separated algebraic stack and let $x\in |\stX|$ be a
closed point. Then there exists an adic flat presentation; that is, there
exists an adic flat morphism of finite presentation $p\colon (\Spec A,v)\to
(\stX,x)$. If the stabilizer group of $x$ is smooth, then there exists an adic
smooth presentation.
\end{proposition}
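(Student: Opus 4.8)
The plan is to reduce the statement to the scheme\nobreakdash-theoretic slicing (fibral) criterion of flatness by cutting a smooth atlas transversally to the residual gerbe. First I would choose a smooth morphism $p_1\co V=\Spec B\to \stX$ from an affine scheme together with a point $v\in V$ lying over $x$, which exists because $\stX$ is algebraic. Writing $\stG_x=\stX^{[0]}$ for the residual gerbe (available since $\stX$ is quasi\nobreakdash-separated) and $V_0:=V\times_\stX \stG_x$ for its preimage, the morphism $V_0\to \stG_x$ is smooth, so $V_0$ is regular; in particular $\oh_{V_0,v}$ is a regular local ring, say of dimension $\delta$. The key observation is that for \emph{any} closed point $v\in V_0$ the induced map $\Spec\kappa(v)\to\stG_x$ is automatically flat of finite type and surjective: this may be checked after the fppf base change $\Spec\bar\kappa\to\stG_x$, over which $\stG_x$ becomes $BG_{x,\bar\kappa}$, and a morphism from a field to $BG$ is always fppf. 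Thus realizing $\Spec\kappa(v)$ as the fibre over $\stG_x$ of a flat morphism $\Spec A\to\stX$ is exactly the adic condition sought.

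Next I would slice. Choose $t_1,\dots,t_\delta\in\idm_v\subseteq\oh_{V,v}$ lifting a regular system of parameters of $\oh_{V_0,v}$ (possible since $\oh_{V,v}\surj\oh_{V_0,v}$), and after shrinking $V$ let $W=\Spec A$ be the closed subscheme of $V$ cut out by $t_1,\dots,t_\delta$, with $u:=v$. By construction $W\times_\stX\stG_x$ is the closed subscheme of $V_0$ cut out by the $t_i$, namely $\Spec\kappa(v)$, a reduced point; so after further shrinking $W$ to an open neighbourhood of $u$ the morphism $p\co(\Spec A,u)\to(\stX,x)$ is adic, and it is of finite presentation since $V\to\stX$ is smooth and $W\inj V$ is cut out by a regular sequence. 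The substantive point is flatness of $p$, and this is where I expect the main obstacle. I would verify it by the fibral criterion: flatness over $\stX$ may be tested after base change along a smooth atlas $T\to\stX$, so it suffices to show $W\times_\stX T\to T$ is flat, where $V\times_\stX T\to T$ is smooth, hence flat. For a point $t'\in T$ over $x$ the fibre $V\times_\stX\kappa(t')=V_0\times_{\stG_x}\Spec\kappa(t')$ is the base change of the regular scheme $V_0$ along the flat morphism $\Spec\kappa(t')\to\stG_x$, so the $t_i$ restrict to a regular sequence on it; the scheme\nobreakdash-theoretic slicing criterion \cite[IV.11.3.8]{EGA} then yields flatness of $W\times_\stX T\to T$ at the points over $v$, hence flatness of $p$ at $u$. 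The crux is this translation between the gerbe\nobreakdash-fibre and honest scheme\nobreakdash-theoretic fibres, which is handled by the faithful flatness of $\Spec\kappa(t')\to\stG_x$ and the stability of regular sequences under flat base change.

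For the smooth refinement, when $G_x$ is smooth the morphism $\stG_x\to\Spec\kappa(x)$ is smooth, so $V_0$ is a smooth $\kappa(x)$\nobreakdash-scheme. Before slicing I would therefore choose the point $v$ so that $\kappa(v)/\kappa(x)$ is \emph{separable}; such closed points exist because $V_0$ is smooth over $\kappa(x)$ (\'etale\nobreakdash-locally it maps \'etale to $\AA^{\delta}_{\kappa(x)}$, over which one pulls back a $\kappa(x)$\nobreakdash-rational point). With this choice $\Spec\kappa(v)\to\stG_x$ is smooth: after base change to $\bar\kappa$ it becomes a $G_{x,\bar\kappa}$\nobreakdash-torsor over the regular ring $\kappa(v)\otimes_{\kappa(x)}\bar\kappa$, which is smooth precisely because $G_x$ is smooth and the extension is separable. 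Since $p$ is flat of finite presentation with smooth fibre $\Spec\kappa(v)$ over $\stG_x$, the fibrewise criterion for smoothness \cite[IV.17.5.1]{EGA} shows that $p$ is smooth at $u$, and after shrinking $W$ this is the desired adic smooth presentation.
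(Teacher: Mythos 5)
Your proof is correct and takes essentially the same route as the paper's: slice a smooth affine presentation of $\stX$ along a lift of a regular system of parameters of the fibre over the residual gerbe, verify flatness of the slice by the slicing criterion after pulling back to an atlas, and in the smooth case pick the point $v$ with $\kappa(v)/\kappa(x)$ separable so that $\Spec\kappa(v)\to\stG_x$ is smooth and the fibrewise smoothness criterion applies. The only (immaterial) difference is that the paper chooses $\kappa(v)/\kappa(x)$ separable in both cases, whereas you correctly observe this is only needed for the smooth refinement.
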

\begin{proof}
The question is local on $\stX$ so we can assume that $\stX$ is quasi-compact.
Start with any smooth presentation $q\colon V=\Spec A\to \stX$. The fiber
$V_x=q^{-1}(\stG_x)=\Spec A/I$ is smooth over the residue field $\kappa(x)$ of
the residual gerbe. Pick a closed point $v\in V_x$ such that
$\kappa(v)/\kappa(x)$ is separable. After replacing $V$ with an open
neighborhood of $v$, we may pick a regular sequence
$\overline{f}_1,\overline{f}_2,\dots,\overline{f}_n\in A/I$ that generates
$\idm_v$. Lift this to a sequence $f_1,f_2,\dots,f_n\in A$ and let $Z\inj V$ be
the closed subscheme defined by this sequence. The sequence is transversely
regular over $\stX$ in a neighborhood $W\subseteq V$ of $v$. In particular,
$U=W\cap Z\to V\to \stX$ is flat. By construction $U_x=Z_x=\Spec \kappa(v)$ so
$(U,v)\to (X,x)$ is an adic flat presentation. Moreover, $U_x=\Spec \kappa(v)\to
\stG_x\to \Spec \kappa(x)$ is \'etale so if the stabilizer group is smooth, then
$U_x\to \stG_x$ is smooth and $U\to X$ is smooth at $v$.
\end{proof}

\begin{corollary}\label{C:adic-artin}
Let $\stX$ be a noetherian algebraic stack. The following statements
are equivalent.
\begin{enumerate}
\item\label{CI:adic-artin-pres}
  There exists an artinian ring $A$ and a flat presentation
$p\colon \Spec A\to \stX$ which is adic at every point of $\stX$.
\item\label{CI:artin-pres}
  There exists an artinian ring $A$ and a flat presentation
$p\colon \Spec A\to \stX$.
\item\label{CI:artin}
  $\stX$ is artinian.
\end{enumerate}
\end{corollary}
\begin{proof}
The implications
\itemref{CI:adic-artin-pres}$\implies$\itemref{CI:artin-pres}$\implies$\itemref{CI:artin}
are trivial. The implication
\itemref{CI:artin}$\implies$\itemref{CI:adic-artin-pres} follows from the
proposition.
\end{proof}

\begin{remark}\label{R:miniversal}
Let $p$ be a smooth morphism $p\colon (U,u)\to (\stX,x)$. We say that $p$ is
\emph{miniversal} at $u$ if the induced morphism $T_{U,u}\to T_{\stX,x}$ on
tangent spaces is an isomorphism. Equivalently, $\Spec \hat{\oh}_{U,u}\to
\stX$ is a formal miniversal deformation space.
If the stabilizer at $x$ is smooth, then
$T_{\stX,x}$ identifies with the normal space $N_x$. Hence, $p$ is miniversal at $u$
if and only if $u$ is a connected component of $p^{-1}(\cG_x)$, that is, if and
only if $p$ is adic after restricting $U$ to a neighborhood of $u$. If the stabilizer at $x$ is
not smooth, then there does not exist smooth adic presentations but there
exists smooth miniversal presentations as well as flat adic presentations.
\end{remark}

If $\cX$ is an algebraic stack, $\cI \subseteq \oh_{\cX}$ is a sheaf of ideals and $\cF$ is a quasi-coherent sheaf, we set $\Gr_{\cI}(\cF) := \oplus_{n \ge 0} \cI^n \cF / \cI^{n+1} \cF$, which is a quasi-coherent sheaf of graded modules on the
closed substack defined by $\cI$.

\begin{proposition}\label{P:closed/iso-cond:infinitesimal}
Let $f\colon (\stX,x)\to (\stY,y)$ be a morphism of noetherian local stacks.
\begin{enumerate}
\item\label{PI:closed:infinitesimal}
  If $f^{[1]}$ is a closed immersion, then $f$ is adic and $f^{[n]}$ is
  a closed immersion for all $n\geq 0$.
\item\label{PI:iso:infinitesimal}
  If $f^{[1]}$ is a closed immersion and there
  exists an isomorphism $\varphi\colon \Gr_{\idm_x}(\oh_X)\to
  (f^{[0]})^*\Gr_{\idm_y}(\oh_Y)$ of graded $\oh_{\stX_x^{[0]}}$-modules,
  then $f^{[n]}$ is an isomorphism for all $n\geq 0$.
\end{enumerate}
\end{proposition}
\begin{proof}
Pick an adic flat
presentation $p\colon \Spec A\to \stY$. After pulling back $f$ along $p$, we
may assume that $\stY=\Spec A$ is a scheme. If $f^{[0]}$ is a closed immersion,
then $\stX_x^{[0]}$ is also a scheme, hence so is $\stX_x^{[n]}$ for all $n\geq 0$.
After replacing $f$ with $f^{[n]}$ for some $n$ we may thus assume that
$\stX=\Spec B$ and $\stY=\Spec A$ are affine and local artinian.
If
in addition $f^{[1]}$ is a closed immersion, then $\idm_A\to
\idm_B/\idm_B^2$ is surjective; hence so is $\idm_AB\to \idm_B$ by Nakayama's
Lemma. We conclude that $f$ is adic and that $A\to B$ is surjective (Nakayama's
Lemma again).

Assume that in addition we have an isomorphism $\varphi\colon
\Gr_{\idm_A}A\cong \Gr_{\idm_B} B$ of graded $k$-vector spaces where
$k=A/\idm_A=B/\idm_B$. Then $\dim_k \idm_A^n/\idm_A^{n+1}=\dim_k
\idm_B^n/\idm_B^{n+1}$. It follows that the surjections
$\idm_A^n/\idm_A^{n+1}\to \idm_B^n/\idm_B^{n+1}$ induced by $f$ are
isomorphisms. It follows that $f$ is an isomorphism.
\end{proof}


\begin{definition}\label{D:complete-local-stack}
Let $\stX$ be an algebraic stack. We say that $\stX$ is a
\emph{complete local stack} if
\begin{enumerate}
\item $\stX$ is local with closed point $x$,
\item $\stX$ is excellent with affine stabilizers, and
\item $\stX$ is coherently complete along the residual gerbe $\cG_x$.
\end{enumerate}
Recall
from Definition \ref{D:cc} that (3) means that the natural functor
\[
  \Coh(\cX)  \to  \ilim_n \Coh\bigl(\cX_x^{[n]}\bigr)
\]
is an equivalence of categories.
\end{definition}

\begin{proposition}\label{P:closed/iso-cond:complete}
Let $f\colon (\stX,x)\to (\stY,y)$ be a morphism of complete local stacks.
\begin{enumerate}
\item\label{PI:closed:complete}
  $f$ is a closed immersion if and only if $f^{[1]}$ is a closed immersion.
\item\label{PI:iso:complete}
  $f$ is an isomorphism if and only if $f^{[1]}$ is a closed immersion and there
  exists an isomorphism $\varphi\colon \Gr_{\idm_x}(\oh_X)\to
  (f^{[0]})^*\Gr_{\idm_y}(\oh_Y)$ of graded $\oh_{\stX_x^{[0]}}$-modules.
\end{enumerate}
\end{proposition}
\begin{proof}
The conditions are clearly necessary. Conversely, if $f^{[1]}$ is a closed
immersion, then $f$ is adic and $f^{[n]}$ is a closed immersion for all $n\geq
0$ by
Proposition~\ref{P:closed/iso-cond:infinitesimal}~\itemref{PI:closed:infinitesimal}. We thus obtain a system of closed immersions $f^{[n]}\colon \stX_x^{[n]}\inj
\stY_y^{[n]}$ which is compatible in the sense that $f^{[m]}$ is the pull-back of $f^{[n]}$ for every $m\leq n$. Since $\stY$ is coherently complete, we obtain a unique closed
substack $\stZ\inj \stY$ such that $\stX_x^{[n]}=\stZ\times_\stY \stY_y^{[n]}$ for all $n$.
If there exists an isomorphism
$\varphi$ as in the second statement, then $f^{[n]}$ is an isomorphism for all
$n\geq 0$ by Proposition~\ref{P:closed/iso-cond:infinitesimal}~\itemref{PI:iso:infinitesimal} and $\stZ=\stY$.
Finally, since
$\stX$, $\stY$ and $\stZ$ are complete local stacks, it follows by Tannaka duality (Theorem \ref{T:tannakian})
that we have an isomorphism $\stX\to \stZ$ over $\stY$ and the result follows.
\end{proof}


\subsection{Artin approximation}
Artin's original approximation theorem applies to the henselization of an
algebra of finite type over a field or an excellent Dedekind
domain~\cite[Thm.~1.12]{artin-approx}. This is sufficient for the main body of
this article but for the generality of this appendix we need Artin approximation
over arbitrary excellent schemes. It is well-known that this follows from
Popescu's theorem (general N\'eron desingularization), see
e.g.\ \cite[Thm.~1.3]{popescu-general}, \cite[Thm.~11.3]{spivakovsky_popescus_theorem} and \cite[Tag
\spref{07QY}]{stacks-project}. We include a proof here for completeness.

\begin{theorem}[Popescu]
A regular homomorphism $A\to B$ between noetherian rings is a
filtered colimit of smooth homomorphisms.
\end{theorem}

Here regular means flat with geometrically regular fibers.
See~\cite[Thm.~1.8]{popescu-general} for the original proof and~\cite{swan_popescus_theorem,spivakovsky_popescus_theorem} or \cite[Tag \spref{07BW}]{stacks-project} for more recent proofs.

\begin{theorem}[Artin approximation]\label{T:artin-approximation}
Let $S=\Spec A$ be the spectrum of a G-ring (e.g., excellent), let $s\in S$ be
a point and let $\widehat{S}=\Spec \widehat{A}$ be the completion at $s$.  Let
$F\colon (\Sch/S)^\op\to \Sets$ be a functor locally of finite
presentation. Let $\overline{\xi}\in F(\widehat{S})$ and let $N\geq 0$ be an
integer. Then there exists an \'etale neighborhood $(S',s')\to (S,s)$ and an
element $\xi'\in F(S')$ such that $\xi'$ and $\overline{\xi}$ coincide in
$F(S^{[N]}_s)$.
\end{theorem}
\begin{proof}
We may replace $A$ by the localization at the prime ideal $\idp$ corresponding
to the point $s$. Since $A$ is a G-ring, the morphism $A\to \widehat{A}$ is
regular and hence a filtered colimit of smooth homomorphisms $A\to A_\lambda$
(Popescu's theorem). Since $F$ is locally of finite presentation, we can thus
find a factorization $A\to A_1\to \widehat{A}$, where $A\to A_1$ is smooth, and
an element $\xi_1\in F(A_1)$ lifting $\overline{\xi}$. After replacing $A_1$ with a
localization $(A_1)_f$ there is a factorization $A\to A[x_1,x_2,\dots,x_n]\to
A_1$ where the second map is \'etale~\cite[IV.17.11.4]{EGA}. Choose a lift
$\varphi\colon A[x_1,x_2,\dots,x_n]\to A$ of
\[
\varphi_N\colon A[x_1,x_2,\dots,x_n]\to A_1\to \widehat{A}\to A/\idp^{N+1}.
\]
Let $A'=A_1\otimes_{A[x_1,x_2,\dots,x_n]} A$ and
let $\xi'\in F(A')$ be the image of $\xi_1$. By construction we have an
$A$-algebra homomorphism $\varphi'_N\colon A'\to A/\idp^{N+1}$ such that the
images of $\xi'$ and $\overline{\xi}$ are equal in $F(A/\idp^{N+1})$.  Since $A\to
A'$ is \'etale the result follows with $S'=\Spec A'$.
\end{proof}


\subsection{Formal versality}

\begin{definition}
Let $\stW$ be a noetherian algebraic stack, let $w\in |\stW|$ be a closed point and let
$\stW_w^{[n]}$ denote the $n$th infinitesimal neighborhood of $w$. Let $\stX$ be
a category fibered in groupoids and let $\eta\colon \stW\to \stX$ be a
morphism. We say that $\eta$ is \emph{formally versal} (resp.\ \emph{formally
  universal}) at $w$ if the following lifting condition holds. Given a
$2$-commutative diagram of solid arrows
\[
\xymatrix{
\stW_w^{[0]}\ar@{(->}[r]^{\iota}
  & \stZ\ar[r]^{f}\ar@{(->}[d]^{g}
  & \stW\ar[d]^{\eta} \\
& \stZ'\ar[r]\ar@{-->}[ur]^{f\mathrlap{'}} & \stX
}
\]
where $\stZ$ and $\stZ'$ are local artinian stacks and $\iota$ and $g$ are
closed immersions, there exists a morphism (resp.\ a unique morphism) $f'$
and $2$-isomorphisms such that the whole diagram is $2$-commutative.
\end{definition}

\begin{proposition}\label{P:formal-versality-criterion}
Let $\eta\colon (\stW,w)\to (\stX,x)$ be a morphism of noetherian algebraic
stacks.  Assume that $w$ and $x$ are closed points.
\begin{enumerate}
\item If $\eta^{[n]}$ is \'etale for every $n$, then
$\eta$ is formally universal at $w$.
\item If $\eta^{[n]}$ is smooth for every $n$ and the stabilizer $G_w$ is
linearly reductive, then $\eta$ is formally versal at $w$.
\end{enumerate}
\end{proposition}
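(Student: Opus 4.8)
The plan is to reduce the lifting problem in the definition of formal versality to a lifting problem against the induced morphisms $\eta^{[n]}\colon \cW^{[n]}\to \cX^{[n]}$, where the smoothness (resp.\ \'etaleness) hypothesis is available. In the solid diagram, $\stZ$ and $\stZ'$ are local artinian stacks whose closed points map to $w$ and $x$; this is exactly what the pointing $\iota\colon \stW^{[0]}\inj \stZ$ records. Since $\idm_\stZ$ and $\idm_{\stZ'}$ are nilpotent, I would first note that $f\colon \stZ\to \stW$ factors through $\cW^{[n]}$ and that $\stZ'\to \stX$ factors through $\cX^{[n]}$ for $n\gg 0$; fixing one such $n$ (so that all intermediate thickenings inside $\stZ'$ are accommodated), the outer square becomes a lifting problem for $\eta^{[n]}$. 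This morphism is representable, since it induces the same map on stabilizer groups at $w$ as the representable $\eta^{[0]}$, and it is \'etale (resp.\ smooth) by hypothesis. Any solution $f'\colon \stZ'\to \cW^{[n]}$ then yields the required $f'\colon \stZ'\to \stW$ after composing with $\cW^{[n]}\inj \stW$, because $\eta\circ(\cW^{[n]}\inj \stW)$ agrees with $(\cX^{[n]}\inj \stX)\circ \eta^{[n]}$.

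For the first statement, $g\colon \stZ\inj \stZ'$ is a nilpotent closed immersion of artinian stacks. As $\eta^{[n]}$ is representable and \'etale, it is formally \'etale, so there is a lift $f'\colon \stZ'\to \cW^{[n]}$ completing the square, unique up to unique $2$-isomorphism. This gives formal universality, and no reductivity hypothesis is required, as the governing deformation, obstruction, and infinitesimal automorphism groups vanish identically.

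For the second statement, I would factor $g$ into finitely many square-zero extensions and lift one step at a time, reducing to the case where $\stZ\inj \stZ'$ is a square-zero extension with ideal $\cJ$, a coherent $\oh_\stZ$-module. By the deformation theory of representable morphisms \cite{olsson-defn}, the obstruction to lifting $f$ across $\eta^{[n]}$ lies in $\Ext^1_{\oh_\stZ}\bigl(Lf^*L_{\cW^{[n]}/\cX^{[n]}},\cJ\bigr)$. As $\eta^{[n]}$ is smooth and representable, $L_{\cW^{[n]}/\cX^{[n]}}=\Omega_{\cW^{[n]}/\cX^{[n]}}$ is a vector bundle in degree $0$, so this group is $H^1\bigl(\stZ, f^*\mathcal{T}\tensor \cJ\bigr)$ with $\mathcal{T}=\Omega_{\cW^{[n]}/\cX^{[n]}}^\vee$. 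When it vanishes the lift exists, which is all that versality (as opposed to universality) demands.

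The crux, and the step I expect to be the main obstacle, is the vanishing of this $H^1$, equivalently that $\stZ$ is cohomologically affine; this is where linear reductivity of $G_w$ enters. I would argue by d\'evissage: any coherent sheaf on $\stZ$ carries the finite filtration by the subsheaves $\idm_\stZ^k(-)$, whose successive quotients are annihilated by $\idm_\stZ$ and are therefore coherent sheaves on $\stZ^{[0]}=\stW^{[0]}=BG_w$, i.e.\ finite-dimensional $G_w$-representations. Since $G_w$ is linearly reductive, $BG_w$ is cohomologically affine, so these quotients have vanishing higher cohomology; as $\stZ^{[0]}\inj \stZ$ is affine, the same holds computed on $\stZ$, and the long exact sequences attached to the filtration give $H^i(\stZ,-)=0$ for all $i>0$. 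In particular the obstruction above vanishes and the lift exists, establishing formal versality. That one obtains only versality and not universality is expected here, since for smooth non-\'etale $\eta^{[n]}$ the set of lifts is a torsor under the in general nonzero group $H^0\bigl(\stZ, f^*\mathcal{T}\tensor \cJ\bigr)$.
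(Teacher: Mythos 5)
Your proof is correct and follows essentially the same route as the paper: reduce the lifting problem to the representable morphisms $\cW^{[n]}\to\cX^{[n]}$, use descent/formal \'etaleness for part (1), and for part (2) identify the obstruction in $\Ext^1_{\oh_{\cZ}}(f^*\Omega_{\cW^{[n]}/\cX^{[n]}},\cJ)$ and kill it by cohomological affineness of $\cZ$. The only difference is that you spell out details the paper leaves implicit (the d\'evissage showing $\cZ$ is cohomologically affine and the factorization of $g$ into square-zero extensions).
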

\begin{proof}
We begin with the following observation: if $(\cZ,z)$ is a local artinian stack
and $h\colon (\cZ,z) \to (\mathcal{Q},q)$ is a morphism of algebraic stacks,
where $q$ is a closed point, then there exists an $n$ such that $h$ factors
through $\mathcal{Q}_q^{[n]}$.  Now, if we are given a lifting problem, then
the previous observation shows that we may assume that $\cZ$ and $\cZ'$ factor
through some $\cW_w^{[n]} \to \cX_x^{[n]}$.
The first part is now clear from descent.
For the second part, the obstruction to the existence of a lift belongs to the group $\Ext^1_{\oh_{\cZ}}(f^*L_{\cW_w^{[n]}/\cX_x^{[n]}},I)$, where $I$ is the square zero ideal defining the closed immersion $g$. When $\eta^{[n]}$
is representable, this follows directly from \cite[Thm.~1.5]{olsson-defn}. In
general, this follows from the fundamental exact triangle of the cotangent
complex for $\cZ\to \cW_w^{[n]}\times_{\cX_x^{[n]}} \cZ'\to \cZ'$ and two
applications of \cite[Thm.~1.1]{olsson-defn}.
But $\cZ$ is cohomologically affine and $L_{\cW_w^{[n]}/\cX_x^{[n]}}$ is a
perfect complex of Tor-amplitude $[0,1]$, so the $\Ext$-group vanishes. The
result follows.
\end{proof}


\subsection{Refined Artin--Rees for algebraic stacks}
The results in this section are a generalization of~\cite[\S3]{conrad-dejong} (also
see \cite[Tag~\spref{07VD}]{stacks-project}) from rings to algebraic stacks.

\begin{definition}\label{D:AR}
Let $\stX$ be a noetherian algebraic stack and let $\stZ\inj \stX$ be a closed
substack defined by the ideal $\sI\subseteq \oh_\stX$. Let $\varphi\colon
\sE\to \sF$ be a homomorphism of coherent sheaves on $\stX$. Let $c\geq 0$ be
an integer. We say that $\AR{c}$ holds for $\varphi$ along $\stZ$ if
\[
\varphi(\sE)\cap \sI^n\sF \subseteq \varphi(\sI^{n-c}\sE),\quad \forall n\geq c .
\]
\end{definition}

When $\stX$ is a scheme, $\AR{c}$ holds for all sufficiently large $c$ by the
Artin--Rees lemma. If $\pi\colon U\to \stX$ is a flat presentation, then
$\AR{c}$ holds for $\varphi$ along $\stZ$ if and only if $\AR{c}$ holds for
$\pi^*\varphi\colon\pi^*\sE\to \pi^*\sF$ along $\pi^{-1}(\stZ)$. In particular
$\AR{c}$ holds for $\varphi$ along $\stZ$ for all sufficiently large $c$.
If $f\colon \sE'\surj \sE$ is a surjective homomorphism, then $\AR{c}$ for
$\varphi$ holds if and only if $\AR{c}$ for $\varphi\circ f$ holds.

In the following section, we will only use the case when $|\stZ|$ is a closed
point.

\begin{theorem}\label{T:Artin-Rees}
Let $\sE_2\xrightarrow{\alpha} \sE_1\xrightarrow{\beta} \sE_0$
and $\sE'_2\xrightarrow{\alpha'} \sE'_1\xrightarrow{\beta'} \sE'_0$
be two complexes of coherent sheaves on a noetherian algebraic stack $\cX$.  Let $\stZ\inj \stX$ be a closed
substack defined by the ideal $\sI\subseteq \oh_\stX$.  Let $c$ be a positive integer. Assume
that
\begin{enumerate}
\item $\sE_0,\sE'_0,\sE_1,\sE'_1$ are vector bundles,
\item the sequences are isomorphic after tensoring with $\oh_{\cX}/\sI^{c+1}$,
\item the first sequence is exact, and
\item $\AR{c}$ holds for $\alpha$ and $\beta$ along $\stZ$.
\end{enumerate}
Then
\begin{enumerate}[label=(\alph*)]
\item the second sequence is exact in a neighborhood of $\stZ$;
\item $\AR{c}$ holds for $\beta'$ along $\stZ$; and
\item given an isomorphism $\varphi\colon \sE_0\to \sE'_0$, there exists a
  unique isomorphism $\psi$ of $\Gr_\sI(\oh_X)$-modules in the diagram
\[
\xymatrix@C+2mm{
\Gr_\sI(\sE_0)\ar@{->>}[r]^-{\Gr(\gamma)}\ar[d]^{\Gr(\varphi)}_{\cong}
 & \Gr_\sI(\coker \beta)\ar[d]^{\psi}_{\cong} \\
\Gr_\sI(\sE'_0)\ar@{->>}[r]^-{\Gr(\gamma')}
 & \Gr_\sI(\coker \beta')
}%
\]
where $\gamma\colon \sE_0\to \coker \beta$ and
$\gamma'\colon \sE'_0\to \coker \beta'$ denote the induced maps.
\end{enumerate}
\end{theorem}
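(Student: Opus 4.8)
The plan is to reduce the statement to the case of a noetherian affine scheme, where it becomes the refined Artin--Rees lemma of Conrad--de Jong (cf.\ \cite[\S3]{conrad-dejong}), and then to extract the three conclusions from the behaviour of the $\sI$-adic filtration. First I would choose a flat presentation $\pi\colon \Spec R\to \stX$ (which exists by the proposition above) and pull everything back along $\pi$. This is legitimate: exactness in a neighbourhood of $\stZ$ is detected on the faithfully flat cover, since the homology sheaves are coherent with closed support and $\pi$ is flat, open, and surjective; the condition $\AR{c}$ is insensitive to flat pullback, as noted before the theorem; and the isomorphism $\psi$ in $(c)$ is unique, being forced by the surjectivity of $\Gr(\gamma)$, hence canonical and so descends from $\Spec R$ back to $\stX$. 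Thus I may assume $\stX=\Spec R$ with $R$ noetherian, $\stZ=V(\sI)$, and the $\sE_i,\sE'_i$ finite $R$-modules with $\sE_0,\sE_1,\sE'_0,\sE'_1$ free.

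Over $\Spec R$ I would first lift the given mod-$\sI^c$ isomorphisms of the free terms $\sE_0,\sE_1$ to honest $R$-linear maps $\Phi_0,\Phi_1$ into $\sE'_0,\sE'_1$; since these maps are isomorphisms modulo $\sI$, Nakayama's lemma makes them isomorphisms after shrinking to a neighbourhood of $V(\sI)$. The two ladders then commute modulo $\sI^c$, so $\beta'\Phi_1$ and $\Phi_0\beta$ differ by a homomorphism landing in $\sI^c\sE'_0$; consequently the submodules $\im\beta'$ and $\Phi_0(\im\beta)$ of $\sE'_0$ agree modulo $\sI^c\sE'_0$. The heart of the argument, following Conrad--de Jong, is then a successive-approximation comparison: using the exactness of the first complex to solve the lifting equations (this is where the non-free term $\sE_2$ is handled, rather than by lifting a map $\Phi_2$) and the bounds $\AR{c}$ for $\alpha$ and $\beta$ to control the errors, one upgrades the mod-$\sI^c$ agreement to an identification compatible with the $\sI$-adic filtrations on associated graded in every degree.

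From this comparison I would read off conclusions $(b)$ and $(c)$ simultaneously: the filtration that $\im\beta'$ induces on $\sE'_0$ has the same associated graded as the one induced by $\Phi_0(\im\beta)$, which yields both the bound $\AR{c}$ for $\beta'$ and the isomorphism $\psi\colon \Gr_\sI(\coker\beta)\to\Gr_\sI(\coker\beta')$ compatible with $\Gr(\varphi)$, with uniqueness automatic. For the remaining conclusion $(a)$ I would argue on associated graded. The hypotheses $\AR{c}$ for $\alpha$ and $\beta$ are exactly the strictness that makes $\Gr_\sI$ of the (exact) first complex exact; transporting this through the graded identifications of the previous step shows $\Gr_\sI$ of the second complex is exact as well. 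Exactness of the associated graded, together with $\sI$-adic completeness (completion being faithfully flat on the noetherian local rings at the points of $V(\sI)$) and the openness of the exact locus, then promotes this to exactness of the second complex in a neighbourhood of $\stZ$.

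The main obstacle is the middle step, the refined Artin--Rees comparison, and the difficulty is essentially the bookkeeping of constants: one must verify that agreement to order $c$ of the two complexes, combined with the bounds $\AR{c}$, genuinely pins down the associated graded of the cokernel in each degree and forces the second complex to be strict. This is the delicate ``refined'' input that distinguishes the statement from the classical Artin--Rees lemma, and it is precisely here that the exactness of the first complex and the exact form of the $\AR{c}$ estimates must be used in tandem; by contrast, the reduction to the affine case and the graded-to-filtered promotion in $(a)$ are comparatively formal.
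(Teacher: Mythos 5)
Your proposal is correct and follows essentially the same route as the paper: both reduce to the affine local case via a flat presentation (using that exactness, $\AR{c}$, and the existence/uniqueness of $\psi$ can all be checked after flat pullback) and then defer the refined Artin--Rees comparison to \cite[\S3]{conrad-dejong}. The only cosmetic difference is that the paper normalizes the non-free terms by replacing $\sE_2$ and $\sE'_2$ with a common free module surjecting onto both (using that $\AR{c}$ is insensitive to precomposition with a surjection), whereas you absorb this into the lifting step; both are fine.
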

\begin{proof}
Note that there exists an isomorphism $\psi$ if and only if $\ker
\Gr(\gamma)=\ker \Gr(\gamma')$.  All three statements can thus be checked after
pulling back to a presentation $U\to \stX$. We may also localize and assume
that $\stX=U=\Spec A$ where $A$ is a local ring. Then all vector bundles are
free and we may choose isomorphisms $\sE_i\cong\sE'_i$ for $i=0,1$ such that
$\beta=\beta'$ modulo $\sI^{c+1}$. We can also choose a surjection $\epsilon'\colon \oh_U^n\surj
\sE'_2$ and a lift $\epsilon\colon \oh_U^n\surj \sE_2$ modulo $\sI^{c+1}$, so that
$\alpha\circ\epsilon=\alpha'\circ\epsilon'$ modulo $\sI^{c+1}$. Thus, we may assume that $\sE_i=\sE'_i$ for
$i=0,1,2$ are free. The result then follows from~\cite[Lem.~3.1 and
  Thm.~3.2]{conrad-dejong} or \cite[Tags~\spref{07VE} and
  \spref{07VF}]{stacks-project}.
\end{proof}


\subsection{Equivariant algebraization}
We now consider the equivariant generalization of Artin's algebraization
theorem, see~\cite[Thm.~1.6]{artin-algebraization} and
\cite[Thm.~1.5, Rem.~1.7]{conrad-dejong}. In fact, we give a general
algebraization theorem for algebraic stacks.

\begin{theorem}\label{T:approximate-algebraization}
Let $S$ be an excellent scheme and let $T$ be a noetherian algebraic space over
$S$. Let $\cZ$ be an algebraic stack of finite presentation over $T$ and let
$z\in |\cZ|$ be a closed point such that $\cG_z\to S$ is of finite type.  Let $t \in T$ be the image of $z$.  Let
$\cX_1,\dots,\cX_n$ be categories fibered in groupoids over $S$, locally of finite presentation. Let $\eta\co \cZ \to
\cX=\cX_1\times_S\dots \times_S \cX_n$ be a morphism. Fix an integer $N\geq 0$. Then there exists
\begin{enumerate}
\item\label{TI:approx-first}
  an affine scheme $S'$ of finite type over $S$ and a closed point $s' \in S'$ mapping to the same point in $S$ as $t \in T$;
\item an algebraic stack $\cW \to S'$ of finite type;
\item a closed point $w\in |\cW|$ over $s'$;
\item a morphism $\xi\co \cW \to \cX$;
\item\label{TI:approx-last}
  an isomorphism $\cZ \times_T T^{[N]}_t \cong \cW \times_{S'} S'^{[N]}_{s'}$ over $\cX$ mapping $z$ to $w$; 
in particular, there is an isomorphism $\cZ^{[N]}_z\cong \cW^{[N]}_w$ over $\cX$; and
\item\label{TI:approx-graded-iso}
  an isomorphism $\Gr_{\fm_z}\oh_{\cZ}\cong \Gr_{\fm_w}\oh_{\cW}$ of
graded algebras over $\cZ^{[0]}_z\cong \cW^{[0]}_w$.
\end{enumerate}
Moreover, if $\cX_i$ is a quasi-compact algebraic stack and $\eta_i\co \cZ \to
\cX_i$ is affine for some $i$, then it can be arranged so that $\xi_i \co
\cW \to \cX_i$ is affine.
\end{theorem}
\begin{proof}
We may assume that $S=\Spec A$ is affine. Let $t\in T$ be the image of $z$. By replacing $T$ with the completion $\hat{T}=\Spec \hat{\oh}_{T,t}$
and $\cZ$ with $\cZ \times_T \hat{T}$, we may assume that $T=\hat{T}=\Spec
B$ where $B$ is a complete local ring. By standard limit methods, we have an
affine scheme $S_0=\Spec B_0$ and an algebraic stack $\cZ_0\to S_0$ of finite
presentation and a commutative diagram
\[
\xymatrix{%
\cZ\ar[r]\ar[d]\ar@/^2ex/[rr] & \cZ_0\ar[r]\ar[d] & \stX\ar[d] \\
T\ar[r] & S_0\ar[r]\ar@{}[ul]|\square & S
}%
\]
If $\stX_i$ is algebraic and quasi-compact and $\cZ\to \stX_i$ is
affine for some $i$, we may also arrange so that $\cZ_0\to \stX_i$ is
affine~\cite[Thm.~C]{rydh-noetherian}.

Since $\stG_z\to S$ is of finite type, so is $\Spec \kappa(t)\to S$. We may
thus choose a factorization $T\to S_1=\AA^n_{S_0}\to S_0$, such that
$T\to \hat{S}_1=\Spec \hat{\oh}_{S_1,s_1}$ is a
closed immersion; here $s_1\in S_1$ denotes the image of $t\in T$.
After replacing $S_1$ with an open neighborhood, we may assume that $s_1$ is a
closed point. Let
$\stZ_1=\stZ_0\times_{S_0} S_1$ and
$\hat{\stZ}_1=\stZ_1\times_{S_1}\hat{S}_1$. Consider the functor
$F\colon (\Sch/S_1)^\op\to \Sets$ where $F(U \to S_1)$ is the set of isomorphism
classes of complexes 
$$ \sE_2\xrightarrow{\alpha} \sE_1\xrightarrow{\beta}
  \oh_{\stZ_1 \times_{S_1}U}$$ 
  of finitely presented quasi-coherent $\oh_{\stZ_1 \times_{S_1} U} $-modules with $\cE_1$ locally free. By standard limit arguments, $F$ is locally of finite presentation.

We have an element $(\alpha,\beta)\in F(\hat{S}_1)$ such that $\im(\beta)$
defines $\stZ \inj \hat{\stZ}_1$. Indeed, choose a resolution
\[
\hat{\oh}_{S_1,s_1}^{\mathrlap{\oplus r}}\xrightarrow{\quad\tilde{\beta}\;\;}\hat{\oh}_{S_1,s_1}\surj B.
\]
After pulling back $\tilde{\beta}$ to $\hat{\stZ}_1$, we obtain a resolution
\[
\ker(\beta)\xhookrightarrow{\;\;\alpha\;\;} \oh_{\hat{\stZ}_1}^{\oplus r}
  \xrightarrow{\;\;\beta\;\;} \oh_{\hat{\stZ}_1}\surj \oh_{\stZ}.
\]
After increasing $N$, we may assume that $\AR{N}$ holds for $\alpha$
and $\beta$ at $z$.

Artin approximation (Theorem~\ref{T:artin-approximation}) gives
an \'etale neighborhood $(S',s') \to (S_1,s_1)$ and an
element $(\alpha',\beta')\in F(S')$ such that $(\alpha,\beta)=
(\alpha',\beta')$ in $F(S_{1,s_1}^{[N]})$. We let $\stW\inj \stZ_1\times_{S_1}
S'$ be the closed substack defined by $\im(\beta')$.  Then $\stZ \times_T T_t^{[N]}$ and
$\stW \times_{S'} S'^{[N]}_{s'}$ are equal as closed substacks of $\stZ_1\times_{S_1} S_{1,s_1}^{[n]}$ and \itemref{TI:approx-first}--\itemref{TI:approx-last}
follows. Finally~\itemref{TI:approx-graded-iso} follows from Theorem~\ref{T:Artin-Rees}.
\end{proof}

\begin{theorem}\label{T:algebraization}
Let $S$, $T$, $\stZ$, $\eta$, $N$, $\stW$ and $\xi$ be as in
Theorem~\ref{T:approximate-algebraization}.  If $\eta_1\colon \stZ\to \stX_1$ is formally
versal, then there are compatible isomorphisms $\varphi_n\colon \stZ_z^{[n]} \iso
\stW_w^{[n]}$ over $\stX_1$ for all $n\geq 0$. For $n\leq N$, the isomorphism
$\varphi_n$ is also compatible with $\eta$ and $\xi$.
\end{theorem}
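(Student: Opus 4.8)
The plan is to build the isomorphisms $\varphi_n$ by induction on $n$: use the isomorphisms coming out of Theorem~\ref{T:approximation} to start the induction, use formal versality of $\eta_1$ to propagate them one infinitesimal order at a time, and then verify that each morphism so produced is in fact an isomorphism by invoking the artinian criterion of Proposition~\ref{P:closed/iso-cond:artinian} together with the graded-algebra isomorphism of Theorem~\ref{T:approximation}~\itemref{TI:approx-graded-iso}.

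For the base of the induction, Theorem~\ref{T:approximation}~\itemref{TI:approx-last} already supplies an isomorphism $\cZ^{[N]}\cong \cW^{[N]}$ over $\cX$; inverting it and restricting gives isomorphisms $\varphi_n\colon \cW^{[n]}\iso \cZ^{[n]}$ for all $n\leq N$, and these are automatically compatible with $\eta$ and $\xi$ because they are isomorphisms over the entire product $\cX=\cX_1\times_S\dots\times_S\cX_n$. Since the construction of $\cW$ in Theorem~\ref{T:approximation} already enlarges $N$ as needed, we may and do assume $N\geq 1$, so that in particular $\varphi_1$ is available as an isomorphism.

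For the inductive step, suppose $\varphi_n\colon \cW^{[n]}\iso \cZ^{[n]}$ has been constructed over $\cX_1$ for some $n\geq N$. I would apply formal versality of $\eta_1$ to the diagram
\[
\xymatrix{
\cW^{[0]}\ar@{(->}[r] & \cW^{[n]}\ar[r]^-{\varphi_n}\ar@{(->}[d] & \cZ^{[n]}\ar@{(->}[r] & \cZ\ar[d]^{\eta_1} \\
& \cW^{[n+1]}\ar[rr]\ar@{-->}[urr] & & \cX_1,
}
\]
whose bottom row is $\xi_1|_{\cW^{[n+1]}}$ and whose solid square $2$-commutes precisely because $\varphi_n$ is a morphism over $\cX_1$. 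Versality yields the dashed morphism $\cW^{[n+1]}\to \cZ$ over $\cX_1$ extending $\cW^{[n]}\xrightarrow{\varphi_n}\cZ^{[n]}\inj \cZ$. Because this morphism restricts on $\cW^{[0]}$ to the residual gerbe inclusion $\cZ^{[0]}\inj\cZ$, it is adic, so its pullback of $\fm_z^{\,n+2}$ lands in $\fm_w^{\,n+2}=0$; hence it factors through $\cZ^{[n+1]}$, yielding $\varphi_{n+1}\colon \cW^{[n+1]}\to \cZ^{[n+1]}$ over $\cX_1$ extending $\varphi_n$.

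It remains to see that $\varphi_{n+1}$ is an isomorphism, and this is the crux. First, $\varphi_{n+1}|_{\cW^{[1]}}=\varphi_1$ is an isomorphism, hence a closed immersion, so $\varphi_{n+1}$ is a closed immersion by Proposition~\ref{P:closed/iso-cond:artinian}~\itemref{PI:closed:artinian}. Second, truncating the graded isomorphism $\Gr_{\fm_z}\oh_{\cZ}\cong \Gr_{\fm_w}\oh_{\cW}$ in degrees $\leq n+1$ produces an isomorphism $\Gr_{\fm_w}(\oh_{\cW^{[n+1]}})\cong (\varphi_{n+1}^{[0]})^*\Gr_{\fm_z}(\oh_{\cZ^{[n+1]}})$ of graded $\oh_{\cW^{[0]}}$-modules, so Proposition~\ref{P:closed/iso-cond:artinian}~\itemref{PI:iso:artinian} upgrades $\varphi_{n+1}$ to an isomorphism. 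The resulting $\varphi_n$ are compatible by construction, and for $n\leq N$ they remain compatible with $\eta$ and $\xi$ as noted above. The main obstacle is exactly this last point: formal versality by itself only produces the tower of morphisms $\varphi_n$ over $\cX_1$, which could a priori fail to be isomorphisms (a versal map may acquire spurious tangent directions), and it is the existence of the graded-algebra isomorphism---itself guaranteed by the refined Artin--Rees input of Theorem~\ref{T:Artin-Rees} feeding Theorem~\ref{T:approximation}---that pins down equality of lengths at every order and thereby forces each closed immersion $\varphi_n$ to be an isomorphism.
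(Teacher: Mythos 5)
Your proof is correct and follows essentially the same route as the paper's: start from the isomorphism $\varphi_N$ supplied by Theorem~\ref{T:approximation}~\itemref{TI:approx-last} (after reducing to $N\geq 1$), extend inductively by formal versality of $\eta_1$, and upgrade each resulting closed immersion to an isomorphism via Proposition~\ref{P:closed/iso-cond:artinian} using the graded isomorphism from Theorem~\ref{T:approximation}~\itemref{TI:approx-graded-iso}. You merely spell out some steps the paper leaves implicit (the adic factorization through $\cZ^{[n+1]}$ and the explicit appeal to both parts of the proposition), which is fine.
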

\begin{proof}
We can assume that $N\geq 1$. By Theorem~\ref{T:approximate-algebraization}, we have an
isomorphism $\varphi_N\co \cZ_z^{[N]} \to \cW_w^{[N]}$  over
$\stX$. By formal versality and induction on $n \geq N$, we can extend $\psi_N=\varphi^{-1}_N$ to compatible morphisms
$\psi_n\colon \cW_w^{[n]}\to \cZ$ over $\stX_1$.  Indeed, formal versality allows us to find a dotted arrow such the diagram
$$\xymatrix@C+7mm{
\cW_w^{[n]} \ar[r]^-{\psi_n} \ar[d]		& \cZ \ar[d]^{\eta} \\
\cW_w^{[n+1]} \ar[r]_-{\xi_1|_{\cW_w^{[n+1]}}} \ar@{-->}[ur]^{\psi_{n+1}}	& \cX_1
}$$
is 2-commutative.  By
Proposition~\ref{P:closed/iso-cond:infinitesimal}~\itemref{PI:iso:infinitesimal}, $\psi_n$ induces an
isomorphism $\varphi_n\colon \stZ_z^{[n]}\to \stW_w^{[n]}$. 
\end{proof}

We now formulate the theorem above in a manner which is transparently an equivariant analogue of Artin algebraization \cite[Thm.~1.6]{artin-algebraization}.  It is this formulation that is directly applied to prove Theorem \ref{T:field}.

\begin{corollary}\label{C:equivariant-algebraization}
Let $H$ be a linearly reductive affine group scheme over an algebraically closed field $k$.
Let $\cX$ be a noetherian algebraic stack of
finite type over $k$ with affine stabilizers.
Let $\hat{\cH} = [\Spec C / H]$ be a noetherian algebraic stack over $k$. Suppose that $C^{H}$ is a complete local $k$-algebra.
 Let $\eta\colon \hat{\cH} \to \cX$ be a morphism
that is formally versal at a closed point $z\in |\hat{\cH} |$.  Let $N \ge 0$. Then there exists 
\begin{enumerate}
\item\label{CI:equialg:first}
  an algebraic stack $\cW = [\Spec A / H]$ of finite type over $k$;
\item a closed point $w \in |\cW|$;
\item\label{CI:equialg:third}
  a morphism
$f \co \cW \to \cX$;
\item
  a morphism
$\varphi \co (\hat{\cH},z) \to (\cW,w)$;
\item a $2$-isomorphism $\tau\co \eta\Rightarrow f\circ \varphi$; and
\item\label{CI:equialg:H-torsors-equal-over-N-truncation}
  a $2$-isomorphism $\nu_N\co \alpha^{[N]}\Rightarrow \beta^{[N]}\circ
  \varphi^{[N]}$ where $\alpha\co \hat{\cH}\to BH$ and $\beta\co \cW\to BH$
  denote the structure morphisms.
\end{enumerate}
such that for all $n$, the induced morphism $\varphi^{[n]} \co \hat{\cH}^{[n]}_z \to \cW^{[n]}_w$ is an isomorphism.
In particular, $\varphi$ induces an isomorphism $\hat{\varphi} \co  \hat{\cH} \to \hat{\cW}$ where $\hat{\cW}$ is the coherent completion of $\cW$ at $w$ (i.e., $\hat{\cW}= \cW \times_{W} \Spec \hat{\oh}_{W,w_0}$ where $W = \Spec A^H$ and $w_0 \in W$ is the image of $w $ under $\cW \to W$).
\end{corollary}
\begin{proof}
By Theorem \ref{T:good-finite-type}, the good moduli space
$\hat{\cH}\to \Spec C^H$ is of finite type.
If we apply Theorem~\ref{T:algebraization} with $S = \Spec k$, $T=\Spec C^H$,
$\cZ=\hat{\cH}$,
$\cX_1 = \cX$
and $\cX_2 = BH$, then we immediately
obtain~\itemref{CI:equialg:first}--\itemref{CI:equialg:third} together with
isomorphisms $\varphi_n \co \hat{\cH}_z^{[n]}\to \cW_w^{[n]}$, a compatible
system of $2$-isomorphisms $\{\tau_n \co \eta^{[n]}\Rightarrow f^{[n]}\circ
\varphi^{[n]}\}_{n\geq 0}$ for all $n$, and a $2$-isomorphism $\nu_N$ as in
\itemref{CI:equialg:H-torsors-equal-over-N-truncation}. Since $\hat{\cH}$ and
$\hat{\cW}$ are coherently complete (Theorem \ref{key-theorem}), the
isomorphisms $\varphi_n$ yield an isomorphism
$\hat{\varphi} \co \hat{\cH}\to \hat{\cW}$ and an induced morphism
$\varphi \co \hat{\cH}\to \cW$ by 
Tannaka duality (Corollary \ref{C:tannakian}).
Likewise, the system $\{\tau_n\}$ induces a $2$-isomorphism $\tau\co \eta
\Rightarrow f\circ \varphi$ by Tannaka duality (full faithfulness in
Corollary \ref{C:tannakian}).
\end{proof}

\begin{remark}
If $\cX$ is merely a category fibered in groupoids over $k$ that is locally of
finite presentation (analogously to the situation in
\cite[Thm.~1.6]{artin-algebraization}), then
Corollary~\ref{C:equivariant-algebraization} and its proof remain valid except
that instead of the $2$-isomorphism $\tau$ we only have the system
$\{\tau_n\}$.
\end{remark}


\bibliography{refs}
\bibliographystyle{dary}
\end{document}